\newtheorem{theorem}{Theorem}[section]
\newtheorem{corollary}[theorem]{Corollary}
\newtheorem{proposition}[theorem]{Proposition}
\newtheorem{definition}[theorem]{Definition}
\newtheorem{remark}[theorem]{Remark}
\newtheorem{example}[theorem]{Example}
\newtheorem{conjecture}[theorem]{Conjecture}
\numberwithin{equation}{section}
\DeclareMathOperator*{\Hom}{\text{Hom}}
\newcommand{\midarrow}{\tikz \draw[thin,-angle 45] (0,0) -- +(.25,0);}
\title{A geometric $q$-character formula for snake modules}
\author{Bing Duan} 
\address{School of Mathematics and Statistics, Lanzhou University, Lanzhou 730000, P. R. China.}
\email{duan890818@gmail.com}
\thanks{The first author was supported by China Scholarship Council as a Joint PHD student to visit Department of Mathematics at the University of Connecticut and he would like to thank Department of Mathematics for hospitality during his visit. He was also partially supported by the National Natural Science Foundation of China (Grant No. 11771191). }
\author{Ralf Schiffler}
\address{Department of Mathematics, University of Connecticut, Storrs, CT 06269-3009, USA}
\email{schiffler@math.uconn.edu}
\thanks{The second author was supported by NSF CAREER Grant DMS-1254567, NSF Grant DMS-1800860 and by the University of Connecticut.}
\keywords
{cluster algebra; quantum affine algebra; snake module; geometric character formula}
\subjclass[2010]{13F60, 17B37}
\begin{document}

\maketitle

\begin{abstract}  Let $\mathscr{C}$ be the category of finite dimensional modules over the quantum affine algebra $U_q(\widehat{\mathfrak{g}})$ of a simple complex Lie algebra ${\mathfrak{g}}$. Let $\mathscr{C}^-$ be the subcategory introduced by Hernandez and Leclerc. We prove the geometric $q$-character formula conjectured by Hernandez and Leclerc in types $\mathbb{A}$ and $\mathbb{B}$ for a class of simple modules called snake modules introduced by Mukhin and Young. Moreover, we give a combinatorial formula for the $F$-polynomial of the generic kernel associated to the snake module. As an application, we show that snake modules correspond to cluster monomials with square free denominators and we show that snake modules are real modules. We also show that the cluster algebras of the category $\mathscr{C}_1$ are factorial for Dynkin types $\mathbb{A,D,E}$.
%
%
\end{abstract}

\section{Introduction}
Let $\mathfrak{g}$ be a simple complex Lie algebra and let $U_q(\widehat{\mathfrak{g}})$ be the corresponding quantum affine algebra with quantum parameter $q\in \mathbb{C}^\times$ not a  root of unity. 
Denote by $\mathscr{C}$ the category of finite dimensional $U_q(\widehat{\mathfrak{g}})$-modules. The simple modules in $\mathscr{C}$ have been classified in \cite{CP91,CP94} by Chari and Pressley in terms of Drinfeld polynomials. In \cite{FR98}, Frenkel and Reshetikhin attached a $q$-character to every module in $\mathscr{C}$ and showed that the simple modules are determined up to isomorphism by their $q$-characters. 
Moreover, the  simple modules are parametrized by the highest dominant monomials in their $q$-characters. 

Cluster algebras were introduced in \cite{FZ02} by Fomin and Zelevinsky as a tool for studying canonical bases in Lie theory. A cluster algebra is a commutative algebra with a distinguished set of generators, the \emph{cluster variables}. These cluster variables are constructed by a recursive method called mutation, which is determined by the choice of a quiver $Q$ without loops and 2-cycles. Given a cluster algebra $\mathscr{A}(Q)$, every cluster variable can be expressed as a Laurent polynomial with integer coefficients with respect to any given cluster \cite{FZ02} and this Laurent polynomial has positive coefficients \cite{LS15}. A \emph{cluster monomial} is a product of cluster variables from the same cluster. It was proved in \cite{CKLP13} that the set of all cluster monomials is linearly independent.

\subsection{Category $\mathscr{C}_1$} A connection between representations of quantum affine algebras and cluster algebras was discovered by Hernandez and Leclerc in \cite{HL10}, where a monoidal categorification of certain cluster algebras was given. One significant aspect of a monoidal categorification is that, if it exists, it implies the positivity of the cluster variables and the linear independence of the cluster monomials, see \cite[Proposition 2.2]{HL10}.

The monoidal categorification is realized as a subcategory of the category $\mathscr{C}$ as follows. Let $I$ be the vertex set of the Dynkin diagram of $\mathfrak{g}$ 
for Dynkin types $\mathbb{A,D,E}$ and $I=I_0\cup I_1$ be a partition of $I$ such that every edge connects a vertex of $I_0$ with a vertex of $I_1$. Let $\mathscr{C}_\ell$, $\ell\geq 0$, be the full subcategory of $\mathscr{C}$ whose objects $V$ satisfy the following property. For any composition factor $S$ of $V$ and every $i\in I$, the roots of the $i$-th Drinfeld polynomial of $S$ belong to $\{q^{-2k-\xi_i} \mid 0\leq k\leq \ell\}$, where $\xi_i=1$ if $i\in I_1$ and $\xi_i=0$ if $i\in I_0$.

For Dynkin types $\mathbb{A}$ and $\mathbb{D}_4$, it has been shown in \cite{HL10} that the category $\mathscr{C}_1$ is a monoidal categorification of  a cluster algebra of the same Dynkin type. This result was extended to Dynkin types $\mathbb{A,D,E}$ by Nakajima in \cite{Nak11}, see also \cite{HL13}. In \cite{Q17}, Qin proved that every cluster monomial corresponds to a simple module in $\mathscr{C}$ for Dynkin types $\mathbb{A,D,E}$.

A simple module $M$ in $\mathscr{C}$ is said to be \emph{real} if $M\otimes M$ is simple \cite{Le03}, and $M$ is said to be \emph{prime} if it cannot be written as a non-trivial tensor product of modules \cite{CP97}. 

\subsection{Category $\mathscr{C}^-$}In \cite{HL16}, Hernandez and Leclerc considered a much larger subcategory $\mathscr{C}^-$ of $\mathscr{C}$ which contains, up to spectral shifts, all the simple finite-dimensional $U_q(\widehat{\mathfrak{g}})$-modules. They showed that the Grothendieck ring of $\mathscr{C}^-$ has a cluster algebra structure \cite[Theorem 5.1]{HL16}, and they proposed two conjectures. 

\begin{conjecture}
\cite[Conjecture 5.2]{HL16} \label{conj 5.2}
 The cluster monomials of the cluster algebra are in bijection with the isomorphism classes of real simple objects in $\mathscr{C}^-$.
\end{conjecture}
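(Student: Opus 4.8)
The plan is to establish the two halves of the asserted bijection separately, taking as the starting point the cluster algebra structure on the Grothendieck ring $K_0(\mathscr{C}^-)$ furnished by \cite[Theorem 5.1]{HL16}.

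\textbf{From cluster monomials to real simple modules.} First I would identify the initial seed of the cluster algebra with an explicit family of simple objects of $\mathscr{C}^-$, namely the relevant Kirillov--Reshetikhin modules playing the role of the frozen and initial cluster variables in \cite{HL16}. The decisive step is to upgrade $\mathscr{C}^-$ to a \emph{monoidal categorification}: one must show that each exchange relation of the cluster algebra is realized by a short exact sequence in $\mathscr{C}^-$ of the shape $0\to \bigotimes(\text{one set of neighbours})\to M\otimes M^{*}\to \bigotimes(\text{the other neighbours})\to 0$ with all outer tensor products simple, i.e.\ the $T$-system/cluster exchange relations. Granting this, induction on mutation distance shows that every cluster variable is the class of a prime real simple object, and more generally that a product of cluster variables lying in a common cluster is the class of the tensor product of the corresponding prime real simples, which is simple --- hence real --- because compatibility is built into a single seed. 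This yields an injection from cluster monomials into isomorphism classes of real simple objects of $\mathscr{C}^-$. Part of this program is already in place: for the subcategory $\mathscr{C}_1$ it is Nakajima's monoidal categorification \cite{Nak11} (and \cite{HL10} in types $\mathbb{A}$ and $\mathbb{D}_4$), and the present paper contributes the case of snake modules in types $\mathbb{A}$ and $\mathbb{B}$, where we prove that snake modules are real and correspond to cluster monomials with square-free denominators.

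\textbf{From real simple modules to cluster monomials.} Conversely, one must show this injection is onto the real simples. Here I would use that $K_0(\mathscr{C}^-)$ \emph{is} the cluster algebra, so the class $[S]$ of any simple module is an $\mathbb{N}$-linear combination of cluster monomials, and then argue that realness of $S$ collapses this combination to a single cluster monomial. Concretely, I would compare two bases: the basis of classes of simple modules and Qin's (common-)triangular basis of the cluster algebra, matching the dominance order on highest $\ell$-weight monomials of truncated $q$-characters on one side with the dominance order on $g$-vectors on the other. Both $[S]$ and the cluster monomial with the same $g$-vector are unitriangular with the same leading term with respect to these orders; a multiplicativity argument --- using $[S\otimes S]=[S]^{2}$ on the module side and the analogous multiplicative property of cluster monomials --- then forces $[S]$ to equal that cluster monomial. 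The geometric $q$-character formula and the combinatorial $F$-polynomial proved in this paper for snake modules are exactly the inputs that make this comparison effective in the cases treated here.

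\textbf{Main obstacle.} The hard part is the monoidal categorification of the \emph{entire} category $\mathscr{C}^-$ in all Dynkin types: the exchange relations must hold as exact sequences of $U_q(\widehat{\mathfrak{g}})$-modules with simple outer terms, and since $\mathscr{C}^-$ has infinitely many frozen variables and is not of finite cluster type, the quiver-variety geometry that drives Nakajima's argument for $\mathscr{C}_1$ is not directly available. The converse direction is equally delicate: controlling which simple modules are real, and proving that real simples exhaust the cluster monomials, seems to require a full theory of the triangular basis of $K_0(\mathscr{C}^-)$ together with a positivity/primality dichotomy that is presently established only for special families --- such as the snake modules that are the focus of this paper.
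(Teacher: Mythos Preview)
The statement you are addressing is not a theorem in the paper but an open conjecture, quoted verbatim from \cite[Conjecture~5.2]{HL16}. The paper does \emph{not} contain a proof of it; it only establishes the conjecture for snake modules in types $\mathbb{A}$ and $\mathbb{B}$ (Theorems~\ref{cluster monomials with square free denominator} and~\ref{SM-real modules}), and explicitly flags this as a ``partial answer'' in the remark following Theorem~\ref{SM-real modules}. So there is no proof in the paper against which to compare your proposal.

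That said, your write-up is not a proof either, and you seem to know this: what you have written is a strategic outline with a candid ``Main obstacle'' section acknowledging that both directions are open in general. A few specific comments on the outline itself. In the forward direction, your inductive scheme requires not just that exchange relations lift to short exact sequences but also that the mutated object is again \emph{prime and real}; this is exactly the step that fails to propagate without additional input (e.g.\ the Kang--Kashiwara--Kim--Oh machinery of $R$-matrices, which you do not invoke). In the backward direction, your appeal to Qin's triangular basis presupposes that it coincides with the basis of simple classes in $K_0(\mathscr{C}^-)$, which is itself equivalent to (a version of) the conjecture; the unitriangularity-plus-multiplicativity argument you sketch is circular as stated. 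Finally, note that the paper's contribution for snake modules goes via a different route than your sketch: it uses the explicit path combinatorics of Mukhin--Young to control $F$-polynomials and denominators directly, together with rigidity of the generic kernel, rather than an inductive mutation argument or a triangular-basis comparison.
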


The second conjecture uses the theory of quivers with potentials developed in \cite{DWZ08,DWZ10}. In \cite[Section 5.2.2]{HL16}, Hernandez and Leclerc associated to every simple $U_q(\widehat{\mathfrak{g}})$-module $M$ a so-called \emph{generic kernel} $K(M)$, which is a module over the Jacobian algebra of the quiver with potential. They showed that, up to normalization, the truncated $q$-character of a Kirillov-Reshetikhin module is equal to the $F$-polynomial of the associated generic kernel, and they conjectured the following generalization. 

\begin{conjecture}
\cite[Conjecture 5.3]{HL16} \label{conj 5.3}
Up to normalization, the truncated $q$-character of a real simple module in $\mathscr{C}^-$ is equal to the $F$-polynomial of the associated generic kernel.
\end{conjecture}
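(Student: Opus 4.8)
The plan is to prove the statement for a snake module $M$ in types $\mathbb{A}$ and $\mathbb{B}$ by constructing the generic kernel $K(M)$ explicitly, computing its $F$-polynomial from the geometry of quiver Grassmannians, and matching the outcome against the known combinatorial description of $\chi_q(M)$. Recall that $\mathscr{C}^-$ carries the cluster algebra structure of \cite[Theorem~5.1]{HL16}, whose initial seed is built from Kirillov--Reshetikhin modules and whose quiver $\Gamma$ underlies a quiver with potential $(\Gamma,W)$ in the sense of \cite{DWZ08,DWZ10}; write $\Lambda$ for the associated Jacobian algebra. By definition $F_{K(M)}=\sum_{\mathbf e}\chi\big(\mathrm{Gr}_{\mathbf e}(K(M))\big)\prod_i y_i^{e_i}$, where $\mathrm{Gr}_{\mathbf e}(K(M))$ is the variety of $\Lambda$-submodules of $K(M)$ with dimension vector $\mathbf e$. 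On the other side, Mukhin and Young gave an explicit, multiplicity-free description of $\chi_q(M)$ for a snake module $M$ as a sum over tuples of non-overlapping paths in an auxiliary lattice; from this one reads off the truncation $\chi_q^-(M)$ and its normalization $\chi_q^-(M)/m$ by the highest $\ell$-weight monomial $m$, a polynomial in the inverse "simple-root" monomials $A_{j,s}^{-1}$ with constant term $1$.

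First I would pin down $K(M)$. Hernandez and Leclerc verified their Conjecture~5.3 for Kirillov--Reshetikhin modules, where the relevant $\Lambda$-module is an explicit "rectangular" building block; I would glue these blocks along the combinatorics of the snake to produce a candidate $\Lambda$-module $\widetilde K$, and then verify that $\widetilde K$ satisfies the defining genericity property of the generic kernel (it is the kernel of a generic morphism between the relevant injective-type objects of the category attached to $\mathscr{C}^-$, equivalently that certain $\mathrm{Ext}$-groups in $\Lambda\text{-mod}$ vanish so that the "generic" kernel is forced to equal $\widetilde K$). The target structure is that $K(M)$ is a \emph{thin} module, i.e.\ $\dim_{\Bbbk} K(M)_i\le 1$ at every vertex $i$ of $\Gamma$, with a snake-shaped coefficient quiver. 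Once $K(M)$ is thin, every subrepresentation is determined by the subset of vertices it is supported on, so each $\mathrm{Gr}_{\mathbf e}(K(M))$ is a finite set of reduced points and $F_{K(M)}=\sum_{N\subseteq K(M)}\prod_i y_i^{\dim_\Bbbk N_i}$ is literally the generating function of the submodule lattice of $K(M)$; computing that lattice explicitly also yields the promised closed combinatorial formula for $F_{K(M)}$.

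It then remains to set up an order isomorphism between the submodule lattice of $K(M)$ and the set of monomials of $\chi_q^-(M)/m$. Each such monomial is a product of the $A_{j,s}^{-1}$, whose exponent vector, since distinct $A_{j,s}$ are multiplicatively independent, is exactly a dimension vector for $\Gamma$; under the isomorphism of \cite{HL16} between the Grothendieck ring of $\mathscr{C}^-$ and the cluster algebra these $A_{j,s}^{-1}$ correspond to the $\widehat y$-variables attached to the vertices of $\Gamma$. I would identify the Mukhin--Young path tuples with the order ideals in the submodule lattice of $K(M)$, checking that the "monomial raising" moves in the path model (multiplying by an $A_{j,s}^{-1}$) correspond precisely to enlarging a submodule by one vertex, compatibly with the arrows and the potential relations of $(\Gamma,W)$. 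This gives a dimension-vector-preserving bijection between the terms of $\chi_q^-(M)/m$ and the submodules of $K(M)$, hence $F_{K(M)}=\chi_q^-(M)/m$ under the stated substitution and normalization.

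The main obstacle is twofold. First, identifying the glued module with the genuine generic kernel requires controlling homomorphisms and extensions in the (infinite) Jacobian algebra of $\mathscr{C}^-$: this genericity verification is already the delicate point in the Kirillov--Reshetikhin case and becomes more intricate for longer snakes, where one must rule out degenerations of $K(M)$ to non-thin or non-generic modules. Second, type $\mathbb{B}$ is genuinely harder than type $\mathbb{A}$: the quiver with potential is no longer simply laced, the Mukhin--Young description of $\chi_q(M)$ there uses interlaced \emph{pairs} of paths rather than single paths, and the snake-shaped module $K(M)$ has a more complicated coefficient quiver, so one must either reduce to a folding from type $\mathbb{A}_{2n-1}$ (or $\mathbb{D}_{n+1}$) or redo the submodule analysis directly, in each case re-establishing that every nonempty quiver Grassmannian of $K(M)$ is a single reduced point. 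I expect types $\mathbb{A}$ and $\mathbb{B}$ to need separate but parallel treatments of exactly this step.
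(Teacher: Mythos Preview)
Your proposal takes a fundamentally different route from the paper, and it contains a genuine gap.

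\textbf{The gap.} Your plan hinges on the claim that the generic kernel $K(M)$ is a \emph{thin} $\Lambda$-module, so that every quiver Grassmannian is a finite set of reduced points and the $F$-polynomial is just the generating function of the submodule lattice. This is false already for Kirillov--Reshetikhin modules. In type $\mathbb{A}_3$, for $m=Y_{2,-8}Y_{2,-6}$ the module $K(m)$ has a two-dimensional component at the vertex $(2,-5)$; more generally the paper observes that the dimension of $K(m)$ at a single vertex can be made arbitrarily large by choosing a long enough snake. So the step ``once $K(M)$ is thin, every subrepresentation is determined by its support'' does not apply, and your direct computation of $F_{K(M)}$ as a sum over subsets of vertices collapses. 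It is true that for snake modules one eventually gets $\chi(\mathrm{Gr}_{\mathbf e}(K(m)))\in\{0,1\}$, but this is a nontrivial consequence rather than an input, and the paper derives it \emph{a posteriori} from the Mukhin--Young path description, not from any structural property of $K(m)$ itself. Your proposed ``gluing of KR blocks'' and the subsequent genericity/Ext check would therefore have to be carried out for genuinely non-thin modules, which is a different and substantially harder problem than the one you outline.

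\textbf{What the paper does instead.} The paper avoids all of this by reversing the logic. It does not construct $K(m)$ explicitly or analyze its submodule lattice directly. Rather, it invokes the prior result (from \cite{DLL19}) that every prime snake module already corresponds to a cluster variable in $\mathscr{A}$, then reads off the $\mathbf g$-vector from the highest-weight monomial. Since the quiver with potential $(\Gamma^-_\ell,S_\ell)$ is rigid, the general machinery of Derksen--Weyman--Zelevinsky and Plamondon guarantees that the $F$-polynomial of this cluster variable equals $F_M$ for the generic kernel $M$ between the injectives determined by that $\mathbf g$-vector; passing to the direct limit gives $M=K(m)$. The combinatorial formula for $F_{K(m)}$ in terms of non-overlapping paths is then obtained by \emph{dividing} the known Mukhin--Young expression for $\chi_q(L(m))$ by $m$, not by an independent computation on the quiver side. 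This also makes types $\mathbb{A}$ and $\mathbb{B}$ entirely uniform: no folding or separate submodule analysis is needed, contrary to what you anticipate in your last paragraph.
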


\subsection{Snake modules}
In this paper, we prove both conjectures in Dynkin types $\mathbb{A}$ and $\mathbb{B}$ for \emph{snake modules}, a class of simple $U_q(\widehat{\mathfrak{g}})$-modules introduced by Mukhin and Young in \cite{MY12a,MY12b}. In \cite{MY12a}, they introduced  a purely combinatorial method to compute $q$-characters for snake modules of types $\mathbb{A}$ and $\mathbb{B}$, and in \cite{MY12b}, they used snake modules to construct extended $T$-systems for types $\mathbb{A}$ and $\mathbb{B}$. In~\cite{DLL19}, it was shown that all prime snake modules are real and that they correspond to some cluster variables in the cluster algebra constructed by Hernandez and Leclerc.

Our first main theorem is the following.
\begin{theorem} (Theorem \ref{theorem of geometric formula for sm} and Remark \ref{snake modules geometric character formula remark})
Let $L(m)$ be a prime snake module in $\mathscr{C}^-$. Then up to normalization,  the truncated $q$-character of $L(m)$ is equal to the $F$-polynomial of the associated generic kernel $K(m)$. More precisely, 
\begin{align*}
\chi^-_q(L(m)) = m F_{K(m)}.
\end{align*}
 Replacing the module $K(m)$ by a direct sum, we obtain a similar geometric character formula for arbitrary snake module of types $\mathbb{A}$ and $\mathbb{B}$.
\end{theorem}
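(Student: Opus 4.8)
The plan is to settle the case of prime snake modules first and then reduce the general case to it. For a prime snake module $L(m)$ the argument has three ingredients: (i) an explicit description of the generic kernel $K(m)$ as a module over the Jacobian algebra attached to $\mathscr{C}^-$; (ii) a combinatorial closed formula for its $F$-polynomial $F_{K(m)}$; and (iii) the Mukhin--Young path formula for the truncated $q$-character, which I would then match with (ii) term by term. The base case of everything is the Kirillov--Reshetikhin modules, for which $K(m)$ is known and the identity $\chi^-_q = m\,F_{K(m)}$ is already established in \cite{HL16}.

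For (i), I would construct $K(m)$ by induction along the snake, using the extended $T$-system short exact sequences of \cite{MY12b}: each step adds one box of the snake and, on the representation-theoretic side, corresponds to a short exact sequence of generic kernels, so that $K(m)$ emerges as a ``snake-shaped'' module whose composition factors and submodule lattice are indexed by the boxes of the underlying snake. Along the way I would record that $K(m)$ is rigid (consistent with $[L(m)]$ being a cluster monomial by \cite{DLL19}) and, crucially, that every quiver Grassmannian $\mathrm{Gr}_{\mathbf e}(K(m))$ is empty or a single reduced point. This last point is the technical heart of the proof: the Jacobian algebras occurring here are not string algebras, so there is no off-the-shelf combinatorial module theory to quote, and one must exploit the specific snake structure, propagating the point-count through the $T$-system recursion and controlling how a submodule of the larger kernel restricts to submodules of the smaller ones.

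Granting this, $F_{K(m)} = \sum_{N \subseteq K(m)} \prod_i y_i^{(\dim N)_i}$ becomes literally a generating function over submodules, so (ii) is a parametrization problem: I would identify the submodules $N \subseteq K(m)$ with ``closed'' subconfigurations of the snake, equivalently with the monotone lattice paths appearing in Mukhin--Young's description, and read off $\dim N$ from that combinatorics. For (iii), Mukhin--Young write $\chi_q(L(m))$ as a sum over non-overlapping tuples of paths, each contributing a monomial in the variables $Y_{i,a}$; after the substitution identifying the $\widehat y_j$-variables of \cite{HL16} with the appropriate Laurent monomials in the $Y_{i,a}$, every such monomial takes the form $m \cdot \prod_i \widehat y_i^{\,c_i}$. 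I would then exhibit an explicit bijection between the truncated index set of that sum and the submodule lattice from (ii) sending $c_i$ to $(\dim N)_i$, and compare leading monomials to produce the factor $m$; positivity together with the resulting matching of coefficients (reducing to a comparison of Euler characteristics with path-counts, all equal to $1$ in the range where the $q$-character is thin) then yields $\chi^-_q(L(m)) = m\,F_{K(m)}$. Type $\mathbb{B}$ needs the usual extra bookkeeping from the two box sizes, so I would carry out type $\mathbb{A}$ in full and then adapt.

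Finally, an arbitrary snake module factors as a tensor product $L(m) = \bigotimes_j L(m^{(j)})$ of prime snake modules, so setting $K(m) := \bigoplus_j K(m^{(j)})$ gives $F_{K(m)} = \prod_j F_{K(m^{(j)})}$ and $m = \prod_j m^{(j)}$; since the truncated $q$-character is multiplicative on tensor products within $\mathscr{C}^-$, the prime case delivers the general formula. This is precisely the content of ``replacing $K(m)$ by a direct sum'' in the statement: the honest generic kernel of the (still simple) tensor product need not be this direct sum, but the direct sum is the module whose $F$-polynomial computes $\chi^-_q$.
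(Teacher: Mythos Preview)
Your approach is genuinely different from the paper's, and the paper's route is considerably shorter. The paper does \emph{not} construct $K(m)$ explicitly, does \emph{not} analyse its submodule lattice, and does \emph{not} match anything against the Mukhin--Young path formula in order to prove this theorem. Instead, the proof runs as follows: by \cite{DLL19}, $\chi^-_q(L(m))$ is already known to be a cluster variable in $\mathscr{A}$; Proposition~\ref{g-vectors} computes its $\mathbf g$-vector; then one passes to a finite truncation $\mathscr{A}_\ell$, invokes the rigidity of the truncated quiver with potential, and applies the general machinery of \cite{DWZ08,DWZ10,FK10,Am09,P12} to conclude that the $F$-polynomial of this cluster variable equals $F_M$ for the $A_\ell$-module $M$ given as the generic kernel attached to that $\mathbf g$-vector. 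Taking the direct limit identifies $M$ with $K(m)$. That is the whole argument. The combinatorial description of $F_{K(m)}$ in terms of non-overlapping paths (Theorem~\ref{F-polynomial formula}) and the fact that each quiver Grassmannian is a point (Corollary~\ref{corollary of Grassmannian}) are \emph{consequences} of the theorem, deduced afterwards by combining it with Mukhin--Young, not ingredients in its proof.

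Your plan could in principle be made to work, but it carries a real gap as written. You propose to build a module $M$ inductively along the snake via short exact sequences mirroring the extended $T$-system, compute $F_M$ combinatorially, and match it with $\chi^-_q(L(m))/m$. Even if that matching succeeds, you still owe the identification $M \cong K(m)$, where $K(m)$ is by definition the kernel of a generic map $I(m)^- \to I(m)^+$; nothing in the $T$-system recursion hands you this for free. The paper sidesteps this entirely because Plamondon's result pins down the generic kernel directly from the $\mathbf g$-vector of the cluster variable. What your approach would buy, if the gap were closed, is an explicit and self-contained description of $K(m)$ and its submodules without appealing to the black box of cluster categorification; what the paper's approach buys is a two-paragraph proof that leverages \cite{DLL19} and existing theory. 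Your treatment of the non-prime case via tensor factorisation and $F_{M\oplus N}=F_M F_N$ agrees with the paper's Remark~\ref{snake modules geometric character formula remark}.
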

This proves Conjecture \ref{conj 5.3} for snake modules and gives a geometric algorithm for the truncated $q$-characters. As a slight generalization of Theorem 3.4 of \cite{DLL19}, we show in Theorem \ref{SM-real modules} that snake modules are real modules. 

Then in Theorem~\ref{F-polynomial formula} and Remark \ref{compute dimensional vector}~(3), we give a combinatorial formula for the $F$-polynomial of the generic kernel $K(m)$ associated to a snake module $L(m)$ as a sum over certain non-overlapping paths in a subset of  the $\mathbb{Z}\times\mathbb{Z}$-grid determined by $m$. This result uses the model of Mukhin and Young \cite{MY12a,MY12b}. As a consequence, we also obtain a combinatorial method to find the dimension vector of $K(m)$ as well as all its submodules. Furthermore we show that $K(m)$ is always rigid and it is indecomposable if the snake module $L(m)$ is prime.

As an application, we prove Conjecture \ref{conj 5.2} for snake modules in the first part of the following theorem.

\begin{theorem}(Theorems \ref{cluster monomials with square free denominator} and \ref{denominator formula})
The truncated $q$-character of a snake module $L(m)$ is a cluster monomial. Moreover the denominator of this cluster monomial is square free and is parametrized by the support of $K(m)$ as a representation of the quiver with potential.
\end{theorem}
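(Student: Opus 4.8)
\medskip
\noindent\textbf{Proof proposal.}
The plan is to deduce everything from the geometric $q$-character formula $\chi^-_q(L(m)) = m\, F_{K(m)}$ of the first main theorem, read inside the cluster algebra $\mathscr{A}\cong K_0(\mathscr{C}^-)$ of \cite[Theorem~5.1]{HL16}. In the initial seed of $\mathscr{A}$, whose cluster variables are the truncated $q$-characters of the fundamental and Kirillov--Reshetikhin modules, the highest dominant monomial $m$ is the monomial attached to the $g$-vector of $L(m)$, and $F_{K(m)}$ --- the polynomial $\chi^-_q(L(m))/m$ in the variables $A_{i,a}^{-1}$, which has constant term $1$ --- is the $F$-polynomial of the rigid module $K(m)$. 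Thus $\chi^-_q(L(m))$ has exactly the shape $x^{\mathbf g}F(\hat y)$ of a cluster-character expression, and the theorem splits into: (a) this expression is a cluster monomial, and a cluster variable when $L(m)$ is prime; (b) its denominator vector in the initial seed is the indicator vector of $\operatorname{supp}K(m)$.

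For (a), if $L(m)$ is prime then $K(m)$ is rigid and indecomposable by Theorem~\ref{F-polynomial formula}, and $\chi^-_q(L(m))$ is a cluster variable, refining Theorem~3.4 of \cite{DLL19} through the geometric formula. For a general snake module, I would use the factorization $L(m)\cong\bigotimes_j L(m_j)^{\otimes a_j}$ into prime snakes --- which is what produces the direct-sum decomposition $K(m)\cong\bigoplus_j K(m_j)^{\oplus a_j}$ underlying Remark~\ref{snake modules geometric character formula remark} --- together with the fact that snake modules are real (Theorem~\ref{SM-real modules}), so that this tensor product is simple and $\chi^-_q(L(m))=\prod_j\chi^-_q(L(m_j))^{a_j}$ is a product of cluster variables. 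That this product is a cluster monomial follows from the rigidity of $K(m)$ (Theorem~\ref{F-polynomial formula}): passing to a finite truncation $\mathscr{C}_\ell$, which in types $\mathbb{A}$ and $\mathbb{B}$ has a cluster algebra of finite type, the vanishing of extensions between the summands $K(m_j)$ makes $\bigoplus_j K(m_j)$ a partial cluster-tilting object, hence contained in a cluster-tilting object; so the $\chi^-_q(L(m_j))$ lie in a common cluster of $\mathscr{A}$ and $\chi^-_q(L(m))$ is a monomial in that cluster.

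For (b), I would compute the denominator directly. Writing $F_{K(m)}=\sum_{\mathbf n}c_{\mathbf n}\mathbf y^{\mathbf n}$ and performing the coefficient-free substitution $\hat y_i\mapsto \prod_j x_j^{b_{ji}}$ turns $\chi^-_q(L(m))$ into $\sum_{\mathbf n}c_{\mathbf n}\,x^{\,\mathbf g + B\mathbf n}$, so the pole order along $x_k=0$ is $-\mathbf g_k-\min\{(B\mathbf n)_k : c_{\mathbf n}\neq 0\}$. The combinatorial formula for $F_{K(m)}$ as a sum over non-overlapping paths in the Mukhin--Young grid (Theorem~\ref{F-polynomial formula} and Remark~\ref{compute dimensional vector}), combined with the explicit grid-shaped Hernandez--Leclerc quiver and its exchange matrix in types $\mathbb{A}$ and $\mathbb{B}$, should single out the monomials $\mathbf y^{\mathbf n}$ extremal at each vertex $k$ and show the resulting pole order equals $1$ exactly when $k\in\operatorname{supp}K(m)$ and $0$ otherwise. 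By additivity of denominator vectors over cluster monomials it is enough to treat prime snake modules, where the path model is most transparent.

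The main obstacle will be (b): one must rule out poles of order $\ge 2$, i.e., exclude exponents $\le-2$ of any initial variable, and simultaneously show that each vertex of $\operatorname{supp}K(m)$ does contribute a pole of order exactly $1$ with no cancellation of the extremal term after the $\hat y$-substitution. This requires controlling the extremal --- not merely the top-degree --- monomials of $F_{K(m)}$, precisely the point where the non-overlapping path model of \cite{MY12a,MY12b} and the fine geometry of the Hernandez--Leclerc quiver have to be exploited. The remaining ingredients of (a) --- the prime factorization of snake modules, reality, rigidity, and the completion of a rigid module to a cluster-tilting object --- are comparatively formal given what is already established in the paper.
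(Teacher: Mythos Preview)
Your outline for (a) has a real gap. The claim that the truncated cluster algebra $\mathscr{A}_\ell$ is of finite type in types $\mathbb{A}$ and $\mathbb{B}$ is not correct for $\ell\gg 0$ (only $\mathscr{C}_1$ is finite type), so you cannot invoke the automatic reachability of rigid objects. More importantly, you have not justified the cross-vanishing $\text{Ext}^1_A(K(m_i),K(m_j))=0$ for $i\neq j$: Corollary~\ref{Km rigid} gives rigidity only for each prime factor separately, and rigidity of the direct sum is exactly what is at stake. The paper proves this by a concrete support argument: since $L(m_i)\otimes L(m_j)$ is not prime, the Mukhin--Young path sets $\mathscr{P}_{(i_t(m_i),k_t(m_i))}$ and $\mathscr{P}_{(i_t(m_j),k_t(m_j))}$ are non-overlapping (Remark~\ref{overlapping property of snake modules}), hence the quivers $Q(m_i)$ and $Q(m_j)$ are disjoint with no arrows between them, and in particular the socle vertices of $I(m_j)^+$ lie outside $\text{Supp}(K(m_i))$. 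This forces $\text{Hom}_A(K(m_i),I(m_j)^+)=0$, and then the injective presentation $0\to K(m_j)\to I(m_j)^-\to I(m_j)^+$ kills $\text{Ext}^1_A(K(m_i),K(m_j))$. The cluster-category formula $\text{Ext}^1_{\mathcal C}\cong \text{Ext}^1_A\oplus \text{Ext}^1_A$ then gives compatibility. You should insert this support/Hom argument in place of the finite-type shortcut.

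For (b) your $B\mathbf n$ strategy is sound in principle but unnecessarily hard. The paper avoids analyzing extremal monomials of $F_{K(m)}$ altogether: it applies the substitution $Y_{j,\ell}=z_{j,\ell}/z_{j,\ell+b_{jj}}$ directly to the Mukhin--Young formula $\prod_t\mathfrak m(p_t)=\prod_t\bigl(\prod_{C^+_{p_t}}Y_{j,\ell}\prod_{C^-_{p_t}}Y_{j,\ell}^{-1}\bigr)$, so each path contributes a ratio of square-free monomials, and the non-overlapping condition on the $T$-tuple guarantees that no $z_{i,r}$ can appear twice in the combined denominator (an overlap of denominators would force two paths to share a point). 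For the exact identification of the denominator with $\text{Supp}(K(m))$, the paper exhibits, for each $(i,r-d_i)\in\text{Supp}(K(m))$, an explicit $T$-tuple $(p_1^+,\ldots,p_t,\ldots,{p'}_T^-)$ with $(i,r)$ the unique lower corner of $p_t$, and conversely reads off from the corner sets $C^\pm_{p_\ell}$ that any $z_{i,r}$ occurring in a denominator satisfies $(i,r-d_i)\in\text{Supp}(K(m))$. This replaces your proposed analysis of $\min_k(B\mathbf n)_k$ entirely.
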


It is natural to ask whether all cluster variables with square free denominators correspond to snake modules. This is not the case. However, the only counter examples we found are modules whose truncated $q$-characters are not equal to their ordinary $q$-characters. 

Lastly, the study of square free denominators led us to questions of factorization in the cluster algebra. Applying the results of \cite{ELS18}, we include a proof that the $\mathscr{C}_1$ cluster algebras of Hernandez-Leclerc are factorial for Dynkin types $\mathbb{A,D,E}$. 

This paper is organized as follows. In Section \ref{preliminaries}, we briefly review basic materials on cluster algebras, quantum affine algebras, snake modules, and Hernandez and Leclerc's results. Section~\ref{GFSM} is on our geometric character formula for snake modules and Section \ref{denominator vector} is devoted to the study of denominator vectors of cluster monomials corresponding to snake modules. In the last section, we prove that the $\mathscr{C}_1$ cluster algebras are factorial for Dynkin types $\mathbb{A,D,E}$.

\section{Preliminaries}\label{preliminaries}

\subsection{Quivers and cluster algebras}\label{section2.1}
We recall the definition of the cluster algebras introduced in \cite{HL16}. Let $C=(c_{ij})_{i,j\in I}$ be an indecomposable $n\times n$ Cartan matrix of finte type. Then there exists a diagonal matrix $D=\text{diag}(d_i\mid i\in I)$ with positive entries such that $B=DC=(b_{ij})_{i,j\in I}$ is symmetric. Let $t=\max\{d_i\mid i\in I\}$. Thus 
\begin{align*}
t=\begin{cases}
1 & \text{if $C$ is of type $\mathbb{A}_n$, $\mathbb{D}_n$, $\mathbb{E}_6$, $\mathbb{E}_7$, $\mathbb{E}_8$},\\
2 & \text{if $C$ is of type $\mathbb{B}_n$, $\mathbb{C}_n$ or $\mathbb{F}_4$}, \\
3 & \text{if $C$ is of type $\mathbb{G}_2$}.
\end{cases}
\end{align*}

Let $\widetilde{G}$ be the infinite quiver with vertex set $\widetilde{G}_0=I\times \mathbb{Z}$ and arrows $(i,r) \to (j,s)$ if $b_{ij}\neq 0$ and $s=r+b_{ij}$. It needs to be pointed out that $\widetilde{G}$ has two isomorphic connected components, see Lemma 2.2 of \cite{HL16}. We pick one of the two components and denote it by $G$ with vertex set $G_0$. We consider the full subquiver $G^-$ of $G$ with vertex set $G^-_0=G_0\cap (I\times \mathbb{Z}_{\leq 0})$, see Figure \ref{initial quivers A3 and B2}. 

\begin{figure}
\resizebox{1.0\width}{1.0\height}{
\begin{minipage}[t]{.5\linewidth}
\centerline{
\begin{xy}
(25,50)*+{(2,0)}="a";%
(10,40)*+{(1,-1)}="b"; (40,40)*+{(3,-1)}="c";%
(25,30)*+{(2,-2)}="d";%
(10,20)*+{(1,-3)}="e"; (40,20)*+{(3,-3)}="f";%
(25,10)*+{(2,-4)}="g";%
(10,0)*+{(1,-5)}="h"; (40,0)*+{(3,-5)}="i";%
(10,-15)*+{\vdots}="j"; (25,-15)*+{\vdots}="k";(40,-15)*+{\vdots}="m";%
{\ar "a";"b"};{\ar "a";"c"};%
{\ar "b";"d"};{\ar "c";"d"};%
{\ar "d";"a"};{\ar "d";"e"};{\ar "d";"f"};%
{\ar "e";"b"};{\ar "e";"g"};{\ar "f";"c"};{\ar "f";"g"};%
{\ar "g";"d"};{\ar "g";"h"};{\ar "g";"i"};%
{\ar "h";"e"};{\ar "i";"f"};%
{\ar "j";"h"};{\ar "k";"g"};{\ar "m";"i"};%
{\ar "h";"k"};{\ar "i";"k"};%
\end{xy}}
\end{minipage}
\begin{minipage}[t]{.5\linewidth}
\centerline{
\begin{xy}
(15,60)*+{(2,0)}="1";%
(15,50)*+{(2,-2)}="2"; (30,50)*+{(1,-1)}="3";%
(0,40)*+{(1,-3)}="4"; (15,40)*+{(2,-4)}="5";%
(15,30)*+{(2,-6)}="6"; (30,30)*+{(1,-5)}="7";%
(0,20)*+{(1,-7)}="8"; (15,20)*+{(2,-8)}="9";%
(15,10)*+{(2,-10)}="10"; (30,10)*+{(1,-9)}="11";%
(0,0)*+{(1,-11)}="12";(15,0)*+{(2,-12)}="13";%
(0,-15)*+{\vdots}="14";(15,-15)*+{\vdots}="15";(30,-15)*+{\vdots}="16";%
{\ar "2";"1"};{\ar "5";"2"};{\ar "6";"5"};{\ar "9";"6"};{\ar "10";"9"};{\ar "13";"10"};{\ar "15";"13"};%
{\ar "12";"8"};{\ar "8";"4"};{\ar "11";"7"};{\ar "7";"3"};{\ar "16";"11"};{\ar "14";"12"};%
{\ar "2";"4"};{\ar "4";"6"};{\ar "6";"8"};{\ar "8";"10"};{\ar "10";"12"};{\ar "12";"15"};%
{\ar "1";"3"};{\ar "3";"5"};{\ar "5";"7"};{\ar "7";"9"};{\ar "9";"11"};{\ar "11";"13"};{\ar "13";"16"};%
\end{xy}}
\end{minipage}}
\caption{The quvier $G^-$ in type $\mathbb{A}_3$ (left) and the quvier $G^-$ in type $\mathbb{B}_2$ (right).} \label{initial quivers A3 and B2}
\end{figure}

Let ${\bf z}=\{z_{i,r} \mid (i,r)\in G^-_0\}$ and let $\mathscr{A}$ be the cluster algebra defined by the initial seed $({\bf z},G^-)$. The cluster algebra $\mathscr{A}$ is a cluster algebra of infinite rank.

Let ${\bf Y}^-=\{Y^{\pm 1}_{i,r}\mid (i,r)\in G^-_0\}$ be a new set of indeterminates over $\mathbb{Q}$. For $(i,r)\in G^-_0$, we define $k_{i,r}$ to be the unique positive integer $k$ satisfying 
\[
0 < kb_{ii}-|r| \leq b_{ii}.
\] 
In other words, $(i,r)$ is the $k$th vertex in its column, counting from the top.

For $(i,r)\in G^-_0$, we perform the substitution 
\begin{align}\label{variable substitution z-Y}
z_{i,r}= \prod_{j=0}^{k_{i,r}-1} Y_{i,r+jb_{ii}}.
\end{align}
Note that 
\[
\frac{z_{i,r}}{z_{i,r+b_{ii}}}=Y_{i,r}
\]
for $(i,r+b_{ii})\in G^-_0$.

Let $\Gamma$ be the same quiver as $G$ but with vertex set $\Gamma_0=\{(i,r-d_i):(i,r)\in G_0\}$. Let $\Gamma^-$ be the full subquiver of $\Gamma$ with vertex set $\Gamma^-_0=\Gamma_0\cap (I\times \mathbb{Z}_{\leq 0})$, see Figure \ref{initial gamma quivers A3 and B2}.

\begin{figure}
\resizebox{1.0\width}{1.0\height}{
\begin{minipage}[t]{.5\linewidth}
\centerline{
\begin{xy}
(25,50)*+{(2,-1)}="a";%
(10,40)*+{(1,-2)}="b"; (40,40)*+{(3,-2)}="c";%
(25,30)*+{(2,-3)}="d";%
(10,20)*+{(1,-4)}="e"; (40,20)*+{(3,-4)}="f";%
(25,10)*+{(2,-5)}="g";%
(10,0)*+{(1,-6)}="h"; (40,0)*+{(3,-6)}="i";%
(10,-15)*+{\vdots}="j"; (25,-15)*+{\vdots}="k";(40,-15)*+{\vdots}="m";%
{\ar "a";"b"};{\ar "a";"c"};%
{\ar "b";"d"};{\ar "c";"d"};%
{\ar "d";"a"};{\ar "d";"e"};{\ar "d";"f"};%
{\ar "e";"b"};{\ar "e";"g"};{\ar "f";"c"};{\ar "f";"g"};%
{\ar "g";"d"};{\ar "g";"h"};{\ar "g";"i"};%
{\ar "h";"e"};{\ar "i";"f"};%
{\ar "j";"h"};{\ar "k";"g"};{\ar "m";"i"};%
{\ar "h";"k"};{\ar "i";"k"};%
\end{xy}}
\end{minipage}
\begin{minipage}[t]{.5\linewidth}
\centerline{
\begin{xy}
(15,60)*+{(2,-1)}="1";%
(15,50)*+{(2,-3)}="2"; (30,50)*+{(1,-3)}="3";%
(0,40)*+{(1,-5)}="4"; (15,40)*+{(2,-5)}="5";%
(15,30)*+{(2,-7)}="6"; (30,30)*+{(1,-7)}="7";%
(0,20)*+{(1,-9)}="8"; (15,20)*+{(2,-9)}="9";%
(15,10)*+{(2,-11)}="10"; (30,10)*+{(1,-11)}="11";%
(0,0)*+{(1,-13)}="12";(15,0)*+{(2,-13)}="13";%
(0,-15)*+{\vdots}="14";(15,-15)*+{\vdots}="15";(30,-15)*+{\vdots}="16";%
{\ar "2";"1"};{\ar "5";"2"};{\ar "6";"5"};{\ar "9";"6"};{\ar "10";"9"};{\ar "13";"10"};{\ar "15";"13"};%
{\ar "12";"8"};{\ar "8";"4"};{\ar "11";"7"};{\ar "7";"3"};{\ar "16";"11"};{\ar "14";"12"};%
{\ar "2";"4"};{\ar "4";"6"};{\ar "6";"8"};{\ar "8";"10"};{\ar "10";"12"};{\ar "12";"15"};%
{\ar "1";"3"};{\ar "3";"5"};{\ar "5";"7"};{\ar "7";"9"};{\ar "9";"11"};{\ar "11";"13"};{\ar "13";"16"};%
\end{xy}}
\end{minipage}}
\caption{The quvier $\Gamma^-$ in type $\mathbb{A}_3$ (left) and the quvier $\Gamma^-$ in type $\mathbb{B}_2$ (right).} \label{initial gamma quivers A3 and B2} 
\end{figure}

In this paper, we let $\mathfrak{g}$ be of type $\mathbb{A}$ or $\mathbb{B}$. We work in the full subcategory $\mathscr{C}^-$ of $\mathscr{C}$ whose objects have all their composition factors of the form $L(m)$, where $m$ is a monomial in the variables $Y_{i,r}\in {\bf Y}^-$. 

\subsection{Quantum affine algebras}
Let $\mathfrak{g}$ be a simple complex Lie algebra whose Dynkin diagram has vertex set $I$ and $h^\vee$ be the dual Coxeter number of $\mathfrak{g}$, see Table \ref{dual Coxeter numbers}. Let $\mathfrak{\widehat{g}}$ be the corresponding untwisted affine Lie algebra which is realized as a central extension of the loop algebra $\mathfrak{g} \otimes\mathbb{C}[t,t^-]$. Let $U_q(\widehat{\mathfrak{g}})$ be the Drinfeld-Jimbo quantum enveloping algebra (quantum affine algebra for short) of $\mathfrak{\widehat{g}}$ with parameter $q\in \mathbb{C}^*$ not a root of unity, see \cite{CP94}. 

\begin{table}[H]
\begin{tabular}{c|ccccccccc}
\hline
$\mathfrak{g}$ & $\mathbb{A}_n$ & $\mathbb{B}_n$ &  $\mathbb{C}_n$ & $\mathbb{D}_n$ & $\mathbb{E}_6$ & $\mathbb{E}_7$ & $\mathbb{E}_8$ & $\mathbb{F}_4$ & $\mathbb{G}_2$ \\
$t$ & 1 & 2 & 2 & 1 & 1 & 1 & 1 & 2 & 3 \\
$h^\vee$ & $n+1$ & $2n-1$ & $n+1$ & $2n-2$ & 12 & 18 & 30 & 9 & 4 \\ 
\hline
\end{tabular}
\caption{Dual Coxeter numbers}\label{dual Coxeter numbers}
\end{table}

Let $U_q(\mathfrak{g})$ be the quantum enveloping algebra. Recall that a $U_q(\mathfrak{g})$-module $V$ is of type 1 if it is a direct sum of its weight subspaces. A $U_q(\widehat{\mathfrak{g}})$-module $V$ is said to be of type 1 if the central element $c^{1/2}$ acts as the identity on $V$, and if $V$ is of type 1 as a module $U_q(\mathfrak{g})$. Let $\mathscr{C}$ be the category of finite-dimensional $U_q(\widehat{\mathfrak{g}})$-modules of type 1. Every finite-dimensional simple $U_q(\widehat{\mathfrak{g}})$-module can be obtained from a type 1 module by twisting with an automorphism of $U_q(\widehat{\mathfrak{g}})$, see \cite{CP94,CP95a}. 

Let $K_0(\mathscr{C})$ be the Grothendieck ring of $\mathscr{C}$. Let $\mathcal{P}$ be the free abelian multiplicative group of monomials in infinitely many formal variables $(Y_{i,a})_{i\in I; a\in \mathbb{C}^\times}$. The \textit{$q$-character} of an object $M$ in $\mathscr{C}$ is defined as an injective ring homomorphism $\chi_q$ from $K_0(\mathscr{C})$ to the ring $\mathbb{Z}\mathcal{P}=\mathbb{Z}[Y^{\pm}_{i,a}]_{i\in I; a\in \mathbb{C}^\times}$ of Laurent polynomial in infinitely many formal variables. 

In this paper, we will be concerned only with polynomials involving the subset of variables $Y_{i,aq^r}$, $a\in \mathbb{C}^\times$, $(i,r)\in G_0$. For simplicity of notation, we write $Y_{i,r}$ for $Y_{i,aq^r}$. 

A monomial in $\mathbb{Z}\mathcal{P}$ is called {\it dominant} (respectively, {\it anti-dominant}) if it does not contain a factor $Y^{-1}_{i,r}$  (respectively, $Y_{i,r}$) with $(i,r)\in G_0$. Following \cite{FR98}, for $(i,r)\in \Gamma_0$, define 
\begin{align}\label{root analogue}
v_{i,r} := A^{-1}_{i,r}=Y^{-1}_{i,r-d_i}Y^{-1}_{i,r+d_i} \prod_{j:c_{ji}=-1} Y_{j,r} \prod_{j:c_{ji}=-2} Y_{j,r-1} Y_{j,r+1} \prod_{j:c_{ji}=-3} Y_{j,r-2} Y_{j,r}  Y_{j,r+2},
\end{align}
where the $c_{ij}$ are the entries of the Cartan matrix. It follows that $A_{i,r}$ is a Laurent monomial in the variables $Y_{j,s}$ with $(j,s)\in G_0$, see Section 2.3.2 of \cite{HL13}.

For any simple object $V$ in $\mathscr{C}$, it was shown by Frenkel and Mukhin \cite{FM01} that the $q$-character can be expressed as 
\begin{align*}
\chi_q(V) = m_+(1+\sum_p M_p), 
\end{align*}
where $m_+\in \mathbb{Z}\mathcal{P}$ is a monomial in the variables $Y_{i,r}$, $(i,r)\in G_0$, with positive powers, hence $m_+$ is a dominant monomial, and each $M_p$ is a product of factors $A^{-1}_{i,r}, (i,r)\in \Gamma_0$. The monomial $m_+$ is called the \textit{highest weight monomial} of $V$. There is a partial order $\leq$ on $\mathcal{P}$ defined by
\begin{align*}
m \leq m' \text{ if and only if $m'm^{-1}$ is a monomial generated by $A_{i,r}, (i,r)\in \Gamma_0$}.
\end{align*}
Then $m_+$ is maximum with respect to $\leq$. 

Every simple object in $\mathscr{C}$ can be parametrized by the highest weight monomial occurring in its $q$-character \cite{CP91,FR98}. The highest weight monomial is dominant, but in general the highest weight monomial is not the only dominant monomial occurring in $q$-characters. Given a dominant monomial $m$, one can construct the corresponding simple module $L(m)$. 

A simple module $L(m)$ is called \textit{special} or \textit{minuscule} if $m$ is the only dominant monomial occurring in $\chi_q((L(m))$, see Definition 10.1 of \cite{Nak04} or Section 5.2.2 of \cite{HL10}. It is \textit{anti-special} if there is exactly one anti-dominant monomial occurring in its $q$-character. Clearly, a special or anti-special module must be simple. A simple module is called \textit{thin} if any weight space of the simple module has no dimension greater than 1.
 
Following \cite{HL10}, define the \textit{truncated $q$-character} $\chi^-_q(L(m))$ to be the Laurent polynomial obtained from $\chi_q(L(m))$ by deleting all the monomials involving variables $Y_{i,r}\not\in {\bf Y}^-$. In other words, $\chi^-_q(L(m))\in \mathbb{Z}[Y^{\pm}_{i,r}\mid (i,r)\in G^-_0]$. By Proposition 3.10 of \cite{HL16}, $\chi^-_q$ is an injective ring homomorphism from the Grothendieck ring of $\mathscr{C}^-$ to $\mathbb{Z}[Y^{\pm}_{i,r}\mid (i,r)\in G^-_0]$.

\subsection{Paths}\label{MY paths}
Define a subset $\mathcal{X} \subset I \times \mathbb{Z}$ and an injective mapping $\iota: \mathcal{X} \to \mathbb{Z}\times \mathbb{Z}$ as follows.
\begin{gather}
\begin{align*}
&\text{Type $\mathbb{A}_n$}: \text{ Let } \mathcal{X}:=\{(i,k)\in I \times \mathbb{Z}: i-k \equiv 0 {\hskip -0.7em}\pmod 2\} \text{ and } \iota(i,k)=(i,k).\\
&\text{Type $\mathbb{B}_n$}: \text{ Let } \mathcal{X}:=\{(n,2k): k\in \mathbb{Z} \}\sqcup \{(i,k)\in I \times \mathbb{Z}: i<n \text{ and } k \equiv 1{\hskip -0.7em} \pmod 2\} \text{ and }\\
&\qquad \qquad \quad \iota(i,k) =
\begin{cases}
     (2i, k),  & \text{if } i<n \text{ and } 2n+k-2i \equiv 1{\hskip -0.7em}\pmod 4,  \\
     (4n-2-2i,k), & \text{if } i<n \text{ and } 2n+k-2i \equiv 3{\hskip -0.7em} \pmod 4, \\
     (2n-1, k), & \text{if } i=n.
\end{cases}
\end{align*}
\end{gather}

Following \cite{MY12a,MY12b}, for every $(i,k)\in \mathcal{X}$, a set $\mathscr{P}_{i,k}$ of paths is defined as follows. Here a path is a finite sequence of points in the plane $\mathbb{R}^{2}$. We write $(j,\ell) \in p$ if $(j,\ell)$ is a point of the path $p$. In our diagrams, we connect consecutive points of a path by line segments for illustrative purposes only.

The following is the case of type $\mathbb{A}_n$. For all $(i,k)\in I\times \mathbb{Z}$, let
\begin{align*}
\mathscr{P}_{i,k}=\{ & ((0,y_{0}),(1,y_{1}),\ldots,(n+1,y_{n+1})):  y_{0}=i+k, \\
&y_{n+1}=n+1-i+k, \text{ and } y_{j+1}-y_{j}\in \{1,-1\}, \  0\leq j\leq n\}.
\end{align*}
In other words, a path in $\mathscr{P}_{i,k}$ must start at $(0,i+k)$ and end at $(n+1,n+1-i+k)$ and each step between them can either go up one unit or go down one unit. So $|\mathscr{P}_{i,k}|=\binom{n+1}{i}$.

Note that the cardinality of $\mathscr{P}_{i,k}$ is equal to the number of Young tableaux that fit in an $i\times (n+1-i)$ rectangle.

\begin{figure}
\resizebox{1.0\width}{1.0\height}{
\begin{minipage}[b]{0.5\linewidth}
\centerline{
\begin{tikzpicture}
\draw[step=.5cm,gray,thin] (-0.5,5.5) grid (2,8) (-0.5,5.5)--(2,5.5);
\draw[fill] (0,8.3) circle (2pt) -- (0.5,8.3) circle (2pt) --(1,8.3) circle (2pt) --(1.5,8.3) circle (2pt);
\begin{scope}[thick, every node/.style={sloped,allow upside down}]
\draw (-0.5,7)--node {\midarrow}(0,6.5);
\draw (0,6.5)--node {\midarrow}(0.5,6);
\draw (0.5,6)--node {\midarrow}(1,5.5);
\draw (1,5.5)--node {\midarrow}(1.5,6);
\draw (1.5,6)--node {\midarrow}(2,6.5);
\draw (-0.5,7)--node {\midarrow}(0,7.5);
\draw (0,7.5)--node {\midarrow}(0.5,8);
\draw (0.5,8)--node {\midarrow}(1,7.5);
\draw (1,7.5)--node {\midarrow}(1.5,7);
\draw (1.5,7)--node {\midarrow}(2,6.5);
\draw (0,7.5)--node {\midarrow}(0.5,7);
\draw (0.5,7)--node {\midarrow}(1,6.5);
\draw (1,6.5)--node {\midarrow}(1.5,6);
\draw (0,6.5)--node {\midarrow}(0.5,7);
\draw (0.5,7)--node {\midarrow}(1,7.5);
\draw (0.5,6)--node {\midarrow}(1,6.5);
\draw (1,6.5)--node {\midarrow}(1.5,7);
\end{scope}
\node [above] at (0,8.3) {$1$};
\node [above] at (0.5,8.3) {$2$};
\node [above] at (1,8.3) {$3$};
\node [above] at (1.5,8.3) {$4$};
\node [left] at (-0.5,8) {$0$};
\node [left] at (-0.5,7.5) {$1$};
\node [left] at (-0.5,7) {$2$};
\node [left] at (-0.5,6.5) {$3$};
\node [left] at (-0.5,6) {$4$};
\node [left] at (-0.5,5.5) {$5$};
\end{tikzpicture}}
\end{minipage}}
\caption{In type $\mathbb{A}_4$: illustration of the paths in $\mathscr{P}_{2,0}$.}\label{a4path}
\end{figure}
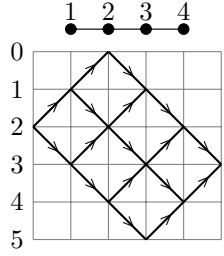

The sets $C_{p}^{\pm}$ of upper and lower corners of a path $p=((r,y_{r}))_{0\leq r \leq n+1}\in \mathscr{P}_{i,k}$ are defined as follows (see Figure \ref{a4path}):
\begin{align*}
C^{+}_{p}=\{(r,y_{r})\in p: r\in I, \ y_{r-1}=y_{r}+1=y_{r+1}\},\\
C^{-}_{p}=\{(r,y_{r})\in p: r\in I, \ y_{r-1}=y_{r}-1=y_{r+1}\}.
\end{align*}

The following is the case of type $\mathbb{B}_n$. Fix an $\varepsilon$ such that $0<\varepsilon <1/2$, for all $\ell \in 2\mathbb{Z}$, the set $\mathscr{P}_{n,\ell}$ is defined as follows.

For all $\ell \equiv 2 \pmod 4$,
\[
\begin{split}
\mathscr{P}_{n,\ell}= \{ & ((0,y_{0}), (2,y_{1}), \ldots, (2n-4,y_{n-2}), (2n-2,y_{n-1}),(2n-1,y_{n})): y_{0}=\ell+2n-1,\\
& y_{i+1}-y_{i}\in \{2,-2\}, \  0\leq i\leq n-2,\ \text{and}\ y_{n}-y_{n-1}\in \{1+\epsilon, -1-\epsilon\}\}.
\end{split}
\]

For all $\ell \equiv 0 \pmod 4$,
\[
\begin{split}
\mathscr{P}_{n,\ell}=& \{((4n-2,y_{0}),(4n-4,y_{1}),\ldots,(2n+2,y_{n-2}), (2n,y_{n-1}),(2n-1,y_{n})): y_{0}=\ell+2n-1,\\
& \ y_{i+1}-y_{i}\in \{2,-2\}, \   0\leq i\leq n-2, \ \text{and}\  y_{n}-y_{n-1}\in \{1+\epsilon, -1-\epsilon\}\}.
\end{split}
\]
In other words, a path in $\mathscr{P}_{n,\ell}$ must start at $(0,\ell+2n-1)$ or at $(4n-2,\ell+2n-1)$ and then it can either go up or go down until $(2n-1,y_{n})$. So $|\mathscr{P}_{n,\ell}|=2^n$.

For all $(i,k)\in \mathcal{X}$, $i< n$, let
\begin{align*}
\mathscr{P}_{i,k}=&\{(a_{0},a_{1},\ldots,a_{n},\overline{a}_{n},\ldots,\overline{a}_{1},\overline{a}_{0}):  (a_{0},a_{1},\ldots,a_n)\in \mathscr{P}_{n,k-(2n-2i-1)},\\
&(\overline{a}_{0},\overline{a}_{1},\ldots,\overline{a}_{n})\in \mathscr{P}_{n,k+(2n-2i-1)}, \text { and } a_{n}-\overline{a}_{n}= (0,y) \text{ where } y>0\}.
\end{align*}
In other words, a path in $\mathscr{P}_{i,k}$, $i<n$, must start at $(0,k+2i)$ (respectively, $(4n-2,k+2i)$) and end at $(4n-2,k+4n-2i-2)$ (respectively, $(0,k+4n-2i-2)$). It is not hard to get $|\mathscr{P}_{n,\ell}|>\binom{2n-1}{i}$.

The sets $C^{\pm}_{p}$ of upper and lower corners of a path $p=((j_{r},\ell_{r}))_{0\leq r \leq |p|-1}\in \mathscr{P}_{i,k}$,
where $|p|$ is the number of points in the path $p$, are defined as follows:
\begin{align*}
C^{+}_{p}=\ & \iota^{-1}\{(j_{r},\ell_{r})\in p: j_{r}\not \in \{0,2n-1,4n-2\}, \ \ell_{r-1}> \ell_{r},\ \ell_{r+1}>\ell_{r}\} \\
\ & \sqcup \{(n,\ell)\in \mathcal{X}: (2n-1,\ell-\epsilon)\in p \text{ and }(2n-1,\ell+\epsilon)\not\in p\}, \\
C^{-}_{p}=\ & \iota^{-1}\{(j_{r},\ell_{r})\in p: j_{r}\not \in \{0,2n-1,4n-2\}, \ \ell_{r-1}< \ell_{r},\ \ell_{r+1}<\ell_{r}\} \\
\ & \sqcup \{(n,\ell)\in \mathcal{X}: (2n-1,\ell-\epsilon)\not\in p \text{ and } (2n-1,\ell+\epsilon)\in p\}.
\end{align*}
These definitions are illustrated in Figures \ref{(4,2)} and \ref{(2,1)}.

\begin{figure}
\resizebox{1.0\width}{1.0\height}{
\begin{minipage}[b]{0.5\linewidth}
\centerline{
\begin{tikzpicture}
\draw[step=.5cm,gray,thin] (0,2) grid (5,8);
\draw[fill] (0,8.3) circle (2pt)--(1,8.3) circle (2pt);
\draw[fill] (1,8.3) circle (2pt)--(2,8.3) circle (2pt);
\draw[fill] (3,8.3) circle (2pt)--(4,8.3) circle (2pt);
\draw[fill] (4,8.3) circle (2pt)--(5,8.3) circle (2pt);
\draw[fill] (2.5,8.3) circle (2pt);
\node [above] at (1,8.3) {1};
\node [above] at (2,8.3) {2};
\node [above] at (2.5,8.3) {3};
\node [above] at (3,8.3) {2};
\node [above] at (4,8.3) {1};
\draw [double,->] (2.075,8.3)--(2.425,8.3);
\draw [double,->] (2.925,8.3)--(2.575,8.3);
\node [left] at (0,8) {0};
\node [left] at (0,7) {2};
\node [left] at (0,6) {4};
\node [left] at (0,5) {6};
\node [left] at (0,4) {8};
\node [left] at (0,3) {10};
\node [left] at (0,2) {12};
\begin{scope}[thick, every node/.style={sloped,allow upside down}]
\draw (0,4.5)--node {\midarrow}(1,5.5);
\draw (1,5.5)--node {\midarrow}(2,6.5);
\draw (2,6.5)--node {\midarrow}(2.5,7.1);
\draw (2,6.5)--node {\midarrow}(2.5,5.9);
\draw (0,4.5)--node {\midarrow}(1.5,3);
\draw (1.5,3)--node {\midarrow}(2,2.5);
\draw (2,2.5)--node {\midarrow}(2.5,1.9);
\draw (2,2.5)--node {\midarrow}(2.5,3.1);
\draw (1,5.5)--(1.5,5) (1.5,5)--node {\midarrow}(2,4.5);
\draw (2,4.5)--node {\midarrow}(2.5,3.9);
\draw (2,4.5)--node {\midarrow}(2.5,5.1);
\draw (1,3.5)--node {\midarrow}(2,4.5);
\draw (2,4.5)--node {\midarrow}(2.5,5.1);
\draw (2,2.5)--node {\midarrow}(2.5,3.1);
\end{scope}
\end{tikzpicture}}
\end{minipage}
\begin{minipage}[b]{0.5\linewidth}
\centerline{
\begin{tikzpicture}
\draw[step=.5cm,gray,thin] (0,2) grid (5,8);
\draw[fill] (0,8.3) circle (2pt)--(1,8.3) circle (2pt);
\draw[fill] (1,8.3) circle (2pt)--(2,8.3) circle (2pt);
\draw[fill] (3,8.3) circle (2pt)--(4,8.3) circle (2pt);
\draw[fill] (4,8.3) circle (2pt)--(5,8.3) circle (2pt);
\draw[fill] (2.5,8.3) circle (2pt);
\node [above] at (1,8.3) {1};
\node [above] at (2,8.3) {2};
\node [above] at (2.5,8.3) {3};
\node [above] at (3,8.3) {2};
\node [above] at (4,8.3) {1};
\draw [double,->] (2.075,8.3)--(2.425,8.3);
\draw [double,->] (2.925,8.3)--(2.575,8.3);
\node [left] at (0,8) {0};
\node [left] at (0,7) {2};
\node [left] at (0,6) {4};
\node [left] at (0,5) {6};
\node [left] at (0,4) {8};
\node [left] at (0,3) {10};
\node [left] at (0,2) {12};
\begin{scope}[thick, every node/.style={sloped,allow upside down}]
\draw (5,5.5)--node {\midarrow}(3.5,7);
\draw (4,6.5)--(3.5,7) (3.5,7)--node {\midarrow}(3,7.5);
\draw (3,7.5)--node {\midarrow}(2.5,8.1);
\draw (3,7.5)--node {\midarrow}(2.5,6.9);
\draw (5,5.5)--node {\midarrow}(3.5,4);
\draw (4,4.5)--(3,3.5) (3,3.5)--node {\midarrow}(2.5,2.9);
\draw (3,3.5)--node {\midarrow}(2.5,4.1);
\draw (4,4.5)--node {\midarrow}(3,5.5);
\draw (4,6.5)--(3,5.5) (3,5.5)--node {\midarrow}(2.5,4.9);
\draw (3,5.5)--node {\midarrow}(2.5,6.1);
\end{scope}
\end{tikzpicture}}
\end{minipage}}
\caption{In type $\mathbb{B}_3$: left, $\mathscr{P}_{3,2}$; right, $\mathscr{P}_{3,0}$.}\label{(4,2)}
\end{figure}

\begin{figure}
\resizebox{1.0\width}{1.0\height}{
\begin{minipage}[b]{0.5\linewidth}
\centerline{
\begin{tikzpicture}
\draw[step=.5cm,gray,thin] (0,1.5) grid (5,8);
\draw[fill] (0,8.3) circle (2pt)--(1,8.3) circle (2pt);
\draw[fill] (1,8.3) circle (2pt)--(2,8.3) circle (2pt);
\draw[fill] (3,8.3) circle (2pt)--(4,8.3) circle (2pt);
\draw[fill] (4,8.3) circle (2pt)--(5,8.3) circle (2pt);
\draw[fill] (2.5,8.3) circle (2pt);
\node [above] at (1,8.3) {1};
\node [above] at (2,8.3) {2};
\node [above] at (2.5,8.3) {3};
\node [above] at (3,8.3) {2};
\node [above] at (4,8.3) {1};
\draw [double,->] (2.075,8.3)--(2.425,8.3);
\draw [double,->] (2.925,8.3)--(2.575,8.3);
\node [left] at (0,8) {0};
\node [left] at (0,7) {2};
\node [left] at (0,6) {4};
\node [left] at (0,5) {6};
\node [left] at (0,4) {8};
\node [left] at (0,3) {10};
\node [left] at (0,2) {12};
\begin{scope}[thick, every node/.style={sloped,allow upside down}]
\draw (0,6.5)--node {\midarrow}(1,7.5);
\draw (1,7.5)--node {\midarrow}(2,6.5);
\draw (2,6.5)--node {\midarrow}(2.5,5.9);
\draw (2.5,6.1)--node {\midarrow}(3,5.5);
\draw (3,5.5)--node {\midarrow}(4,4.5);
\draw (4,4.5)--node {\midarrow}(5,3.5);
\draw (0,6.5)--node {\midarrow}(1,5.5);
\draw (1,5.5)--node {\midarrow}(2,4.5);
\draw (2,4.5)--node {\midarrow}(2.5,3.9);
\draw (2.5,4.1)--node {\midarrow}(3,3.5);
\draw (3,3.5)--node {\midarrow}(4,2.5);
\draw (4,2.5)--node {\midarrow}(5,3.5);
\draw (1,5.5)--node {\midarrow}(2,6.5);
\draw (2,4.5)--node {\midarrow}(2.5,5.1) (2.5,4.9)--node {\midarrow}(3,5.5);
\draw (3,3.5)--node {\midarrow}(4,4.5);
\draw (2.5,3.9)--node {\midarrow}(2.5,5);
\draw (2.5,5)--node {\midarrow}(2.5,6.1);
\end{scope}
\end{tikzpicture}}
\end{minipage}
\begin{minipage}[b]{0.5\linewidth}
\centerline{
\begin{tikzpicture}
\draw[step=.5cm,gray,thin] (0,1.5) grid (5,8);
\draw[fill] (0,8.3) circle (2pt)--(1,8.3) circle (2pt);
\draw[fill] (1,8.3) circle (2pt)--(2,8.3) circle (2pt);
\draw[fill] (3,8.3) circle (2pt)--(4,8.3) circle (2pt);
\draw[fill] (4,8.3) circle (2pt)--(5,8.3) circle (2pt);
\draw[fill] (2.5,8.3) circle (2pt);
\node [above] at (1,8.3) {1};
\node [above] at (2,8.3) {2};
\node [above] at (2.5,8.3) {3};
\node [above] at (3,8.3) {2};
\node [above] at (4,8.3) {1};
\draw [double,->] (2.075,8.3)--(2.425,8.3);
\draw [double,->] (2.925,8.3)--(2.575,8.3);
\node [left] at (0,8) {0};
\node [left] at (0,7) {2};
\node [left] at (0,6) {4};
\node [left] at (0,5) {6};
\node [left] at (0,4) {8};
\node [left] at (0,3) {10};
\node [left] at (0,2) {12};
\begin{scope}[thick, every node/.style={sloped,allow upside down}]
\draw (5,5.5)--node {\midarrow}(4,6.5);
\draw (5,5.5)--node {\midarrow}(4,4.5);
\draw (4,4.5)--node {\midarrow}(3,3.5);
\draw (4,6.5)--node {\midarrow}(3,7.5);
\draw (4,6.5)--node {\midarrow}(3,5.5);
\draw (3,7.5)--node {\midarrow}(2.5,6.9);
\draw (3,5.5)--node {\midarrow}(2.5,4.9);
\draw (3,3.5)--node {\midarrow}(2.5,2.9);
\draw (2.5,7.1)--node {\midarrow}(2,6.5);
\draw (2.5,5.1)--node {\midarrow}(2,4.5);
\draw (2.5,3.1)--node {\midarrow}(2,2.5);
\draw (2,6.5)--node {\midarrow} (1,5.5);
\draw (1,5.5)--node {\midarrow} (0,4.5);
\draw (2,4.5)--node {\midarrow} (1,3.5);
\draw (2,2.5)--node {\midarrow} (1,3.5);
\draw (1,3.5)--node {\midarrow} (0,4.5);
\draw (2.5,2.9)--node {\midarrow}(2.5,4) (2.5,4)--node {\midarrow}(2.5,5) (2.5,5)--node {\midarrow}(2.5,6) (2.5,6)--node {\midarrow}(2.5,7.1);
\draw (2.5,5.9)--node {\midarrow} (2,6.5);
\draw (2,4.5)--node {\midarrow} (1,5.5);
\draw (2.5,3.9)--node {\midarrow} (2,4.5);
\draw (3,5.5)--node {\midarrow} (2.5,6.1);
\draw (4,4.5)--node {\midarrow} (3,5.5);
\draw (3,3.5)--node {\midarrow} (2.5,4.1);
\end{scope}
\end{tikzpicture}}
\end{minipage}}
\caption{In type $\mathbb{B}_3$: left, $\mathscr{P}_{1,1}$; right, $\mathscr{P}_{2,1}$.}\label{(2,1)}
\end{figure}
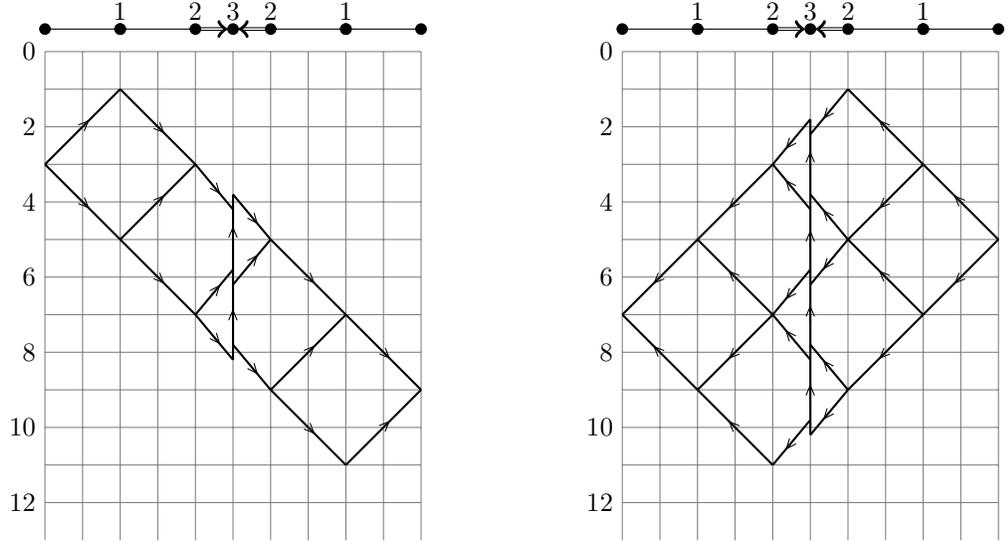

In order to subsequently describe our $F$-polynomials, it is helpful to define the notion of \textit{cell}. We call a region in $\mathscr{P}_{i,k}$ a \textit{cell} if it is a minimal region enclosed by paths. For every cell, we define the coordinate of the cell as follows. If the cell is a square or a square missing a corner, its coordinate is defined as the coordinate of the intersection of two diagonals. If the cell is a right triangle, its coordinate is defined as the coordinate of the midpoint of its hypotenuse. It is obvious that for any $(i,k)\in G_0$, our cell coordinate is an element of $\Gamma_0$.

We also need the following notations in this paper. For all $(i,k)\in \mathcal{X}$, let $p^{+}_{i,k}$ be the highest path which is the unique path in $\mathscr{P}_{i,k}$ with no lower corners and $p^{-}_{i,k}$ the lowest path which is the unique path in $\mathscr{P}_{i,k}$ with no upper corners. Let $p,p'$ be paths. We say that $p$ is \textit{strictly above} $p'$ or $p'$ is \textit{strictly below} $p$ if
\begin{align*}
(x,y)\in p \text{ and } (x,z)\in p' \Longrightarrow y < z.
\end{align*}

\subsection{Snake modules}
A simple module $L(m)$ is called a Kirillov-Reshetikhin module if $m$ is of the form
\begin{align}\label{KR expression}
m=\prod_{j=0}^{k-1}Y_{i,r+jb_{ii}}, \quad (i\in I, r\in \mathbb{Z}, k\geq 1),
\end{align}
and is usually denoted by $W^{(i)}_{k,r}$.

For completeness we recall the definition of snake module introduced by Mukhin and Young in~\cite{MY12a,MY12b}. Let $(i,k) \in \mathcal{X}$. A point $(i',k')\in \mathcal{X}$ is said to be in \textit{snake position} with respect to $(i,k)$ if
\begin{gather}
\begin{align*}
&\text{Type $\mathbb{A}_n$}: \; k'-k \geq |i'-i|+2 \ \text{and} \  k'-k \equiv |i'-i|{\hskip -0.7em} \pmod 2.  \\
&\text{Type $\mathbb{B}_n$}: \; i=i'=n:\ k'-k \geq  2 \ \text{and} \ k'-k \equiv  2 {\hskip -0.7em}\pmod 4,  \\
&\qquad \qquad \quad i \neq i'=n \text{ or } i'\neq i=n: \ k'-k \geq 2|i'-i|+3  \ \text{and} \  k'-k \equiv  2|i'-i|-1 {\hskip -0.7em}\pmod 4, \\
&\qquad \qquad \quad i < n \text{ and }i' < n:\ k'-k \geq 2|i'-i|+4 \  \text{and} \ k'-k \equiv  2|i'-i|{\hskip -0.7em} \pmod 4.
\end{align*}
\end{gather}
The point $(i',k')$ is in \textit{minimal} snake position to $(i,k)$ if $k'-k$ is equal to the given lower bound. The point $(i',k')$ is in \textit{prime} snake position to $(i,k)$ if 
\begin{gather}
\begin{align*}
&\text{Type $\mathbb{A}_n$}: \; \min \{ 2n+2-i-i', i+i' \} \geq k'-k \geq |i'-i|+2 \ \text{and} \  k'-k \equiv |i'-i|{\hskip -0.7em} \pmod 2.  \\
&\text{Type $\mathbb{B}_n$}:\; i=i'=n:\ 4n-2 \geq  k'-k \geq  2 \ \text{and} \ k'-k \equiv  2 {\hskip -0.7em}\pmod 4,  \\
&{\hskip 5em} i \neq i'=n \text{ or } i'\neq i=n: \  2i'+2i-1 \geq  k'-k \geq 2|i'-i|+3  \ \text{and} \  k'-k \equiv  2|i'-i|-1 {\hskip -0.7em}\pmod 4, \\
&{\hskip 5em} i < n \text{ and }i' < n: \  2i'+2i \geq  k'-k \geq 2|i'-i|+4 \  \text{and} \ k'-k \equiv  2|i'-i| {\hskip -0.7em}\pmod 4.
\end{align*}
\end{gather}

A finite sequence $(i_{t},k_{t})$, $1 \leq t \leq T$, $T \in \mathbb{Z}_{\geq 0}$, of points in $\mathcal{X}$ is called a \textit{snake} (respectively, \textit{prime snake}, \textit{minimal snake}) if for all $2 \leq t \leq T$, the point $(i_{t},k_{t})$ is in snake position (respectively, prime snake position, minimal snake position)  with respect to $(i_{t-1},k_{t-1})$ \cite{MY12a, MY12b}. 

The simple module $L(m)$ is called a \textit{snake module} (respectively, \textit{prime snake module}, \textit{minimal snake module}) if $m=\prod_{t=1}^{T}Y_{i_{t},k_{t}}$ for some snake (respectively, prime snake, minimal snake) $(i_{t},k_{t})_{1 \leq t \leq T}$ \cite{MY12a, MY12b}. In this case, we say that $(i_{t},k_{t})_{1 \leq t \leq T}$ is the snake of $L(m)$.

\begin{theorem}{\cite[Section 4.1]{DLL19}}\label{snake modules theorem}
The snake modules of type $\mathbb{A}_n$ or $\mathbb{B}_n$ are precisely the $U_q(\widehat{\mathfrak{g}})$-modules $L(m)$ with highest weight monomial
\begin{align}\label{the dominant monomial of snake modules}
m=\prod_{j=1}^{N} \left(\prod_{s=0}^{k_{j}-1} Y_{i_j,r+b_{i_{j}i_{j}}s+\sum_{\ell=1}^{j-1}n_{\ell}} \right),
\end{align}
where $r\in\mathbb{Z}$, $i_{j}\in I,\ k_{j} \geq 1$ for $1 \leq j \leq N$; furthermore $n_{\ell} = b_{i_\ell i_\ell}(k_\ell-1) + 2t+t|i_{\ell+1}-i_{\ell}|+2tj_\ell + \varepsilon_{i_\ell,i_{\ell+1}}$, where $j_\ell \in \mathbb{Z}_{\geq 0}$ for $1 \leq \ell \leq N-1$, 
\begin{align*}
\varepsilon_{i,j} = \begin{cases}
0 & \text{$\mathfrak{g}$ is of type $\mathbb{A}_n$,} \\
-\delta_{in}-\delta_{jn} & \text{$\mathfrak{g}$ is of type $\mathbb{B}_n$,}
\end{cases}  \quad 
t = \begin{cases}
1 & \text{$\mathfrak{g}$ is of type $\mathbb{A}_n$,} \\
2 & \text{$\mathfrak{g}$ is of type $\mathbb{B}_n$,}
\end{cases} 
\end{align*}
where $\delta_{ij}$ is the Kronecker delta, and we use the convention $\sum_{\ell=1}^{0}n_{\ell}=0$. In particular $L(m)$ is a prime snake module if $j_\ell$ satisfies the following bounded conditions:
\begin{gather}
\begin{align*}
& \text{Type $\mathbb{A}_n$}: 
0 \leq j_\ell \leq \begin{cases} 
n-\max\{i_\ell,i_{\ell+1}\} & \text{ if } i_\ell+i_{\ell+1}\geq n+1; \\
\min\{i_\ell,i_{\ell+1}\}-1 & \text{ if } i_\ell+i_{\ell+1}<n+1.
\end{cases}      \\
& \text{Type $\mathbb{B}_n$}: \;  0\leq j_\ell \leq \min\{i_\ell,i_{\ell+1}\}-1.
\end{align*}
\end{gather}
\end{theorem}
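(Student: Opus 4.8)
The plan is to reduce the statement to a purely arithmetic comparison between two encodings of a snake. First note that along any snake $(i_t,k_t)_{1\le t\le T}$ the second coordinates are strictly increasing, since every snake step raises $k$ by at least its lower bound, which is $\ge 2$ in all cases; hence the points are pairwise distinct, and the snake is recovered from $m=\prod_t Y_{i_t,k_t}$ by listing its variables in order of increasing second subscript. Consequently a monomial is the highest weight monomial of a snake module if and only if it is a product of distinct variables $Y_{i_t,k_t}$ indexed by points of $\mathcal X$ with strictly increasing second coordinates and with each consecutive pair of points in snake position; the task is then to show that this set of monomials coincides with the set of monomials of the form \eqref{the dominant monomial of snake modules}.

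The key step is a uniform reformulation of snake position: for $(i,k),(i',k')\in\mathcal X$ with $k<k'$, the point $(i',k')$ is in snake position with respect to $(i,k)$ if and only if $k'-k$ lies in the arithmetic progression $\{\,2t+t|i'-i|+\varepsilon_{i,i'}+2ts:\ s\in\mathbb Z_{\ge 0}\,\}$; it is in minimal snake position exactly when $s=0$, and in prime snake position exactly when moreover $k'-k$ does not exceed the corresponding upper bound from the definition. I would prove this by inspecting the cases one by one: type $\mathbb A$, and type $\mathbb B$ according as $\{i,i'\}$ contains the node $n$ twice, once, or not at all. In each case one checks that $2t+t|i'-i|+\varepsilon_{i,i'}$ equals the stated lower bound and that the congruence modulo $2t$ together with that lower bound cuts out exactly the above progression. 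One also records the subcase $i=i'$: there the progression starts at $b_{ii}$ (which equals $2t$, except $b_{nn}=t$ in type $\mathbb B$), so two points in the same column are in minimal snake position precisely when their gap is $b_{ii}$, and this is moreover a prime snake position because the upper bound for $i=i'$ is always at least $b_{ii}$ for $1\le i\le n$.

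Granting the key lemma, both directions are short. For "$\supseteq$": given parameters $r,N,(i_j),(k_j\ge 1),(j_\ell\ge 0)$, the monomial \eqref{the dominant monomial of snake modules} is $\prod_{j,s}Y_{i_j,\,c_j+b_{i_ji_j}s}$ with $c_j=r+\sum_{\ell<j}n_\ell$; the points so listed lie in $\mathcal X$ (a parity condition on $r$, implicit in the statement; it propagates along the list because a within-block step adds the even number $b_{i_ji_j}$ and an across-block step adds $n_\ell-b_{i_\ell i_\ell}(k_\ell-1)=2t+t|i_{\ell+1}-i_\ell|+\varepsilon_{i_\ell,i_{\ell+1}}+2tj_\ell$, whose parity is forced by the lemma), they have strictly increasing second subscripts, and consecutive points are in snake position --- within a block by the $i=i'$ subcase, across a block because the gap has the progression form with $s=j_\ell$. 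Hence $L(m)$ is a snake module. For "$\subseteq$": given a snake $(i_1,k_1),\dots,(i_T,k_T)$, take $N=T$, all $k_j=1$, $r=k_1$, and let $j_\ell$ be the unique non-negative integer with $k_{\ell+1}-k_\ell=2t+t|i_{\ell+1}-i_\ell|+\varepsilon_{i_\ell,i_{\ell+1}}+2tj_\ell$ (it exists by the lemma); then $n_\ell=k_{\ell+1}-k_\ell$, so $r+\sum_{\ell<j}n_\ell=k_j$ and \eqref{the dominant monomial of snake modules} returns $m$. (One may instead group maximal runs of same-column points of successive gap $b_{ii}$ into Kirillov-Reshetikhin blocks; the resulting set of monomials is unchanged.)

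Finally, the "in particular" clause. If every $j_\ell$ obeys the stated bound then, by the prime part of the key lemma, each across-block step is in prime snake position, while each within-block step is automatically so; hence the snake is a prime snake and $L(m)$ is by definition a prime snake module. It remains to unwind the upper-bound inequality. In type $\mathbb A$, $2+|i'-i|+2j_\ell\le\min\{2n+2-i-i',\,i+i'\}$ becomes $j_\ell\le n-\max\{i,i'\}$ (from the first argument of the minimum) and $j_\ell\le\min\{i,i'\}-1$ (from the second), and $\min\{i,i'\}-1\le n-\max\{i,i'\}$ holds exactly when $i+i'\le n+1$, which is the asserted dichotomy; in type $\mathbb B$ the same computation gives the uniform bound $j_\ell\le\min\{i,i'\}-1$. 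The argument is elementary throughout; the only place that calls for genuine care --- and the main potential source of error --- is pinning down the correction term $\varepsilon_{i,j}$ and the modulus $2t$ in type $\mathbb B$ so that the single expression $2t+t|i'-i|+\varepsilon_{i,i'}$ reproduces all three lower bounds at once, and so that $b_{nn}=t$ is consistent with the $i=i'=n$ case. Once that bookkeeping is settled, the rest is routine.
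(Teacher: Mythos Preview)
The paper does not prove this theorem at all: it is quoted verbatim from \cite[Section 4.1]{DLL19} and used as a black box, so there is no ``paper's own proof'' to compare against. Your proposal is a self-contained direct verification, and it is correct. The crux is your key lemma rewriting snake position as membership in the arithmetic progression $\{2t+t|i'-i|+\varepsilon_{i,i'}+2ts:s\ge 0\}$; once that is checked case by case (and it does check: the start equals the lower bound and the common difference $2t$ matches the modulus in every subcase, including the delicate $i=i'=n$ case in type $\mathbb B$ where $b_{nn}=2$ and $\varepsilon_{n,n}=-2$), both inclusions and the prime bounds are routine arithmetic. Two small points worth tightening in a final write-up: first, the hypothesis $r\in\mathbb Z$ in the statement must be read as $r$ chosen so that $(i_1,r)\in\mathcal X$, which you flag as ``implicit'' --- make that explicit; second, the ``in particular'' clause is only an \emph{if}, and your argument proves exactly that, but note that the encoding via $N=T$, $k_j=1$ you use for the reverse inclusion is not the one relevant to the prime clause (there the within-block steps of length $b_{i_ji_j}$ matter), so it is cleanest to keep the general $(k_j)$ encoding when proving the prime statement, as you in fact do.
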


\begin{example}
\begin{itemize}
	\item[(1)] Every Kirillov-Reshetikhin module is a snake module, by taking $N=1$ in Theorem \ref{snake modules theorem}.
	\item[(2)] Minimal affinizations introduced in \cite{C95} (see also \cite{ZDLL16}) are snake modules, by taking $i_1<i_2<\ldots<i_N$ or $i_1>i_2>\ldots>i_N$ and $j_\ell=0$ in Theorem \ref{snake modules theorem}.
	\item[(3)] For examples of snake modules which are not Kirillov-Reshetikhin modules or minimal affinizations see Examples \ref{exampleA31}--\ref{exampleB21}.  
\end{itemize}
\end{example}

A graphic interpretation is that in $G^-$, the upper bound of the integer $j_\ell$ is the minimum distance from the columns with vertex labelings $(i_\ell,-)$ and $(i_{\ell+1},-)$ to the leftmost column and the rightmost column respectively. The value of $j_\ell$ is associated to the multiplicity of the left and right operators defined in Section 5.5 of \cite{DLL19}.

Let $(i_{\ell},k_{\ell}) \in \mathcal{X}$, $1 \leq \ell \leq T$, be a snake of length $T \in \mathbb{Z}_{\geq 1}$ and $p_\ell\in\mathscr{P}_{i_{\ell},k_{\ell}}$. We say that a $T$-tuple of paths $(p_{1},\ldots,p_{T})$ is \textit{non-overlapping} if $p_{s}$ is strictly above $p_{t}$ for all $1\leq s<t\leq T$. Let
\begin{align*}\label{non-overlapping T-path}
\overline{\mathscr{P}}_{(i_{t},k_{t})_{1\leq t\leq T}}=\{(p_{1},\ldots,p_{T}): p_{t}\in \mathscr{P}_{i_{t},k_{t}}, \ 1\leq t\leq T, \ (p_{1},\ldots,p_{T})\text { is } \text {non-overlapping} \}.
\end{align*}

Mukhin and Young have proved the following theorem.
\begin{theorem}[{\cite[Theorem 6.1]{MY12a}; \cite[Theorem 6.5]{MY12b}}] \label{path description of q-characters}
Let $(i_{\ell},k_{\ell}) \in \mathcal{X}$, $1 \leq \ell \leq T$, be a snake of length $T \in \mathbb{Z}_{\geq 1}$. Then
\begin{align}
\chi_{q} (L(\prod_{\ell=1}^{T} Y_{i_{\ell}, k_{\ell}})) = \sum_{(p_{1},\ldots,p_{T}) \in \overline{\mathscr{P}}_{(i_{\ell},k_{\ell})_{1 \leq \ell \leq T}}} \prod_{\ell=1}^{T}\mathfrak{m}(p_{\ell}),
\end{align}
where the mapping $\mathfrak{m}$ is defined by
\begin{align*}
\mathfrak{m}: \bigsqcup_{(i,k)\in \mathcal{X}}{\hskip -0.5em}\mathscr{P}_{i,k} & \longrightarrow \mathbb{Z}[Y^{\pm}_{j,\ell}]_{(j,\ell)\in \mathcal{X}} \nonumber \\
p& \longmapsto  \mathfrak{m}(p)=\prod_{(j,\ell)\in C^{+}_{p}}{\hskip -0.5em}Y_{j,\ell}{\hskip -0.5em}\prod_{(j,\ell)\in C^{-}_{p}}{\hskip -0.5em}Y^{-1}_{j,\ell}.
\end{align*} 
Moreover, the module $L(\prod_{\ell=1}^{T} Y_{i_{\ell}, k_{\ell}})$ is thin, special and anti-special.
\end{theorem}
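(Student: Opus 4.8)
The plan is to take the right-hand side, written $\chi_{P}(m)$ with $m=\prod_{t=1}^{T}Y_{i_{t},k_{t}}$, and to show that it is the output of the Frenkel--Mukhin algorithm applied to the dominant monomial $m$, and that this run never meets an ambiguity. By the Frenkel--Mukhin theorem \cite{FM01}, a run of the algorithm without conflict produces the $q$-character of a special simple module; so this will force $L(m)$ to be special with $\chi_{q}(L(m))=\chi_{P}(m)$. Thinness will follow from the fact that in this run each monomial of $\chi_{P}(m)$ is produced exactly once, and anti-specialness from a reflection argument. The case $T=1$ is that of a fundamental module, where the identity $\chi_{q}(L(Y_{i,k}))=\sum_{p\in\mathscr{P}_{i,k}}\mathfrak{m}(p)$ and the thin/special/anti-special properties are classical in type $\mathbb{A}$ (Frenkel--Reshetikhin \cite{FR98}, Frenkel--Mukhin \cite{FM01}) and available in type $\mathbb{B}$ from the known description of fundamental $q$-characters; I would take this as the starting point.

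The first step is purely combinatorial: the only dominant monomial occurring in $\chi_{P}(m)$ is $m$ itself. A tuple $(p_{1},\dots,p_{T})$ contributes a dominant monomial exactly when no $p_{t}$ has a lower corner, that is, $p_{t}=p^{+}_{i_{t},k_{t}}$ for every $t$; one checks that this tuple of highest paths is automatically non-overlapping --- and this is precisely where the snake-position inequalities $k_{t}-k_{t-1}\ge|i_{t}-i_{t-1}|+2$, resp. their type $\mathbb{B}$ analogues, are used --- and that $\prod_{t}\mathfrak{m}(p^{+}_{i_{t},k_{t}})=m$. The same analysis with lower corners in place of upper corners shows that $\chi_{P}(m)$ also has exactly one anti-dominant monomial.

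The heart of the proof is that $\chi_{P}(m)$ is closed under the elementary step of the Frenkel--Mukhin algorithm, and that this step is never ambiguous. Fix $i\in I$ and an $i$-dominant monomial $M=\prod_{t}\mathfrak{m}(p_{t})$ occurring in $\chi_{P}(m)$; one must identify the $\widehat{\mathfrak{sl}}_{2}$-string of $M$ at the node $i$ and verify that it lies inside $\chi_{P}(m)$ with unit multiplicities. This reduces to analysing how the corner sets $C^{\pm}_{p_{t}}$ change when some $p_{t}$ undergoes an elementary local move at column $i$ (replacing an upper corner at $(i,\cdot)$ by a lower corner), and to proving: (a) for a given $M$ and $i$ the moves can be performed on the $p_{t}$ one at a time, in a uniquely determined order, without interaction; (b) each intermediate tuple remains non-overlapping --- again where the snake-position bounds, and at the spin node in type $\mathbb{B}$ the $\varepsilon$-perturbation separating the two strands, enter; (c) every monomial of $\chi_{P}(m)$ is produced exactly once in the course of the algorithm. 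Granting (a)--(c), the algorithm started at $m$ never conflicts and visits exactly the monomials of $\chi_{P}(m)$; hence by \cite{FM01} the module $L(m)$ is special and $\chi_{q}(L(m))=\chi_{P}(m)$, and by (c) it is thin. Finally, the reflection $(j,\ell)\mapsto(j,c-\ell)$ for a suitable constant $c$ swaps upper and lower corners, so it carries $\chi_{P}(m)$ to the $q$-character of a reflected snake module while interchanging dominant and anti-dominant monomials; combined with the uniqueness established above, this gives anti-specialness.

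The main obstacle I anticipate is items (b) and (c): controlling, uniformly in $T$, the interaction of the elementary moves across all $T$ paths, and showing that non-overlapping is preserved and no monomial is produced twice. This is exactly where the precise shape of the snake-position inequalities is indispensable --- a weaker separation between consecutive snake points would let two paths cross under a move and destroy both thinness and the clean running of the algorithm --- and where the type $\mathbb{B}$ bookkeeping (the folding map $\iota$, the corners sitting over the columns $0$, $2n-1$, $4n-2$, and the $\varepsilon$-shifted steps at the spin node) makes the argument noticeably more delicate than in type $\mathbb{A}$.
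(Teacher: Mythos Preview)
The paper does not give its own proof of this theorem: it is quoted verbatim as a result of Mukhin and Young, with references \cite[Theorem~6.1]{MY12a} and \cite[Theorem~6.5]{MY12b}, and is used as a black box throughout Sections~3 and~4. There is therefore no proof in this paper to compare your proposal against.

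For what it is worth, your outline is a fair sketch of the strategy Mukhin and Young actually carry out in \cite{MY12a}. They verify that the path polynomial lies in the image of $\chi_q$ by checking, for each $i\in I$, that its $i$-restriction is a sum of $q$-characters of $U_{q_i}(\widehat{\mathfrak{sl}}_2)$ Kirillov--Reshetikhin modules, and combine this with the unique-dominant-monomial observation to conclude specialness and the character identity. Your phrasing in terms of ``running the Frenkel--Mukhin algorithm without conflict'' is essentially equivalent, though what is literally checked is the screening-operator characterization of $\mathrm{Im}\,\chi_q$ rather than a step-by-step execution. The difficulties you anticipate in items (b) and (c) --- preservation of non-overlapping under local moves, and the type~$\mathbb{B}$ bookkeeping near the spin node with the $\varepsilon$-shifted corners --- are precisely where the technical work in \cite{MY12a} is concentrated.
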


\begin{remark}\label{overlapping property of snake modules}
A snake module $L(\prod_{\ell=1}^{T} Y_{i_{\ell}, k_{\ell}})$ is prime if and only if for all $2\leq t\leq T$ the paths $p^+_{i_t,k_t}\in \mathscr{P}_{i_t,k_t}$ and $p^-_{i_{t-1},k_{t-1}}\in \mathscr{P}_{i_{t-1},k_{t-1}}$ are overlapping.
\end{remark}

In view of Theorem \ref{path description of q-characters}, the $q$-characters of snake modules of types $\mathbb{A}_n$ and $\mathbb{B}_n$ with length $T$ are given by a set of $T$-tuples of non-overlapping paths, the path in each $T$-tuple is non-overlapping. This property is called the \textit{non-overlapping property}.

For any two paths $p_1,p_2\in \mathscr{P}_{i,k}$, $p_1$ can be obtained from $p_2$ by a sequence of moves, see Lemma~5.8 of \cite{MY12a}. We say that $p_1\leq p_2$ if $\mathfrak{m}(p_1)\leq \mathfrak{m}(p_2)$.

In the following we list some known facts about snake modules.
\begin{theorem}{\cite[Proposition 3.1]{MY12b}} \label{factorization of snake modules}
A snake module is prime if and only if its snake is prime. Every snake module can be uniquely written as a tensor product of prime snake modules (up to permutation). 
\end{theorem}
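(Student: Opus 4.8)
The plan is to prove everything through the path model of Theorem~\ref{path description of q-characters}, reducing the statement to a single combinatorial dichotomy about the extremal paths at a joint of the snake. Recall that $\mathscr{P}_{i,k}$ has a unique highest path $p^{+}_{i,k}$ and a unique lowest path $p^{-}_{i,k}$, that being strictly above is a transitive relation on paths, and that if $p^{-}_{i,k}$ is strictly above $p^{+}_{i',k'}$ then every path of $\mathscr{P}_{i,k}$ is strictly above every path of $\mathscr{P}_{i',k'}$. Comparing the explicit formulas of Section~\ref{MY paths} in types $\mathbb{A}_{n}$ and $\mathbb{B}_{n}$, one checks the following: if $(i',k')$ is in snake position with respect to $(i,k)$, then $(i',k')$ is in \emph{prime} snake position with respect to $(i,k)$ if and only if $p^{-}_{i,k}$ and $p^{+}_{i',k'}$ overlap; equivalently, the non-prime case is precisely the case in which $p^{-}_{i,k}$ is strictly above $p^{+}_{i',k'}$. (This is the computation behind Remark~\ref{overlapping property of snake modules}.) I will also use that, for any snake, the all-highest tuple $(p^{+}_{i_{t},k_{t}})_{t}$ and the all-lowest tuple $(p^{-}_{i_{t},k_{t}})_{t}$ are non-overlapping, which is exactly the assertion of Theorem~\ref{path description of q-characters} that snake modules are special and anti-special.

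Suppose first that the snake $(i_{t},k_{t})_{1\le t\le T}$ is not prime, and choose $s$ with $(i_{s+1},k_{s+1})$ in snake position but not in prime snake position with respect to $(i_{s},k_{s})$. By the dichotomy, every path of $\mathscr{P}_{i_{s},k_{s}}$ is strictly above every path of $\mathscr{P}_{i_{s+1},k_{s+1}}$, so the non-overlapping conditions of a $T$-tuple that straddle position~$s$ are automatic; by transitivity, $(p_{1},\dots,p_{T})$ is non-overlapping if and only if $(p_{1},\dots,p_{s})$ and $(p_{s+1},\dots,p_{T})$ both are. Hence $\overline{\mathscr{P}}_{(i_{t},k_{t})_{1\le t\le T}}=\overline{\mathscr{P}}_{(i_{t},k_{t})_{1\le t\le s}}\times\overline{\mathscr{P}}_{(i_{t},k_{t})_{s+1\le t\le T}}$, and since $\mathfrak{m}$ of a concatenation of paths is the product of the two $\mathfrak{m}$'s, Theorem~\ref{path description of q-characters} gives $\chi_{q}(L(m))=\chi_{q}(L(m'))\,\chi_{q}(L(m''))$ with $m'=\prod_{t\le s}Y_{i_{t},k_{t}}$ and $m''=\prod_{t>s}Y_{i_{t},k_{t}}$. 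Since $\chi_{q}$ is an injective ring homomorphism, $[L(m')\otimes L(m'')]=[L(m)]$ in the Grothendieck ring, and as $L(m)$ is simple this forces $L(m)\cong L(m')\otimes L(m'')$; so $L(m)$ is not prime. Both sub-sequences are again snakes, so iterating the splitting at every non-prime joint writes $L(m)$ as a tensor product $\bigotimes_{j}L(m^{(j)})$ whose blocks are prime snakes; granting the next step, each factor $L(m^{(j)})$ is then prime. This proves the existence part of the second assertion and, contrapositively, the ``only if'' half of the first.

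It remains to show that if the snake is prime then $L(m)$ is a prime module; this is the main obstacle. Suppose $L(m)\cong L(m_{1})\otimes L(m_{2})$ with $L(m_{1}),L(m_{2})$ simple and $m_{1},m_{2}\neq 1$. Since snake modules are special, $m$ is the unique dominant monomial of $\chi_{q}(L(m))=\chi_{q}(L(m_{1}))\chi_{q}(L(m_{2}))$, so $m=m_{1}m_{2}$; and as $m=\prod_{t}Y_{i_{t},k_{t}}$ is squarefree in the variables $Y$, we have $m_{1}=\prod_{t\in S}Y_{i_{t},k_{t}}$ and $m_{2}=\prod_{t\notin S}Y_{i_{t},k_{t}}$ for a nonempty proper $S\subset\{1,\dots,T\}$, with $(i_{t},k_{t})_{t\in S}$ and $(i_{t},k_{t})_{t\notin S}$ again snakes. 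Choose $\ell$ with $\ell$ and $\ell+1$ lying in different blocks. Since the snake is prime, $p^{-}_{i_{\ell},k_{\ell}}$ and $p^{+}_{i_{\ell+1},k_{\ell+1}}$ overlap, so no non-overlapping $T$-tuple of $L(m)$ can simultaneously have $p_{\ell}=p^{-}_{i_{\ell},k_{\ell}}$ and $p_{\ell+1}=p^{+}_{i_{\ell+1},k_{\ell+1}}$. The contradiction is obtained by exhibiting a monomial of $\chi_{q}(L(m_{1}))\chi_{q}(L(m_{2}))$ --- built from tuples on the two blocks that are extremal away from position~$\ell$ --- whose only $T$-tuple preimage violates this, using that snake modules are thin, so that a monomial of $\chi_{q}(L(m))$ arises from at most one non-overlapping $T$-tuple. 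Hence $L(m)$ is prime, which is the ``if'' half of the first assertion. Uniqueness in the second assertion then follows from the previous paragraph: the joints at which the snake fails to be prime are intrinsic, so the prime factors of $L(m)$ and their multiplicities are determined up to permutation.

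The delicate point is this last step. The two index sets $S$ and $S^{c}$ may interleave along the snake, so the splitting position~$\ell$ and the witness monomial must be chosen carefully, and one must control exactly when two different $T$-tuples of paths produce the same Laurent monomial --- here the explicit corner description of $\mathfrak{m}$ and the thinness of snake modules are the right tools. A cleaner alternative is a reduction to tensor products of fundamental modules: a hypothetical nontrivial factorization of $L(m)$ yields, after reordering the tensor factors, one containing $L(Y_{i_{\ell},k_{\ell}})\otimes L(Y_{i_{\ell+1},k_{\ell+1}})$ for a pair in prime snake position, which is known to be reducible, contradicting the simplicity of $L(m)$ (compare $q$-characters).
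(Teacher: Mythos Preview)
The paper does not prove this statement; it is quoted from \cite[Proposition~3.1]{MY12b} and used as a black box throughout. So there is no in-paper argument to compare your proposal against, and what you have written is an independent attempt at the Mukhin--Young result.

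Your treatment of the implication ``non-prime snake $\Rightarrow$ $L(m)$ factors'' and of the existence of a tensor decomposition into prime snake modules is correct and is the standard path-model argument: a non-prime joint makes the non-overlapping condition across that joint vacuous, so $\overline{\mathscr{P}}$ splits as a product, $\chi_q$ factors, and injectivity of $\chi_q$ finishes.

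The converse (prime snake $\Rightarrow$ $L(m)$ prime) is only a sketch, and you concede this in your last paragraph. You correctly reduce a hypothetical factorization $L(m)\cong L(m_1)\otimes L(m_2)$ to a partition $S\sqcup S^{c}$ of the index set and choose adjacent $\ell,\ell{+}1$ in opposite blocks, but you then assert the existence of a witness monomial in $\chi_q(L(m_1))\chi_q(L(m_2))$ that is absent from $\chi_q(L(m))$ without constructing it or proving its absence. Thinness of $L(m)$ only says that each monomial of $\chi_q(L(m))$ comes from a \emph{unique non-overlapping} $T$-tuple; it does not exclude that a monomial produced by your overlapping tuple $(p^-$ at $\ell$, $p^+$ at $\ell{+}1)$ is also produced by some other, non-overlapping tuple. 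Your ``cleaner alternative'' has the same gap: a factorization $L(m)\cong L(m_1)\otimes L(m_2)$ does not in any evident way exhibit $L(Y_{i_\ell,k_\ell})\otimes L(Y_{i_{\ell+1},k_{\ell+1}})$ as a tensor factor, and reducibility of that two-fold product does not by itself contradict simplicity of the larger one. Finally, your uniqueness claim (``the non-prime joints are intrinsic'') presupposes that every prime tensor factor of a snake module is the snake module of a \emph{contiguous} sub-snake, which is exactly the hard implication you have not yet established.
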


\begin{theorem}{\cite[Theorem 3.4,Theorem 5.9]{DLL19}}\label{prime sm cluster variables}
Prime snake modules are real and they correspond to some cluster variables in the cluster algebra $\mathscr{A}$. 
\end{theorem}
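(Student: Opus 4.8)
The plan is to prove both assertions together, by induction on the number $N$ of Kirillov-Reshetikhin segments in the factorization of the highest weight monomial $m$ given by Theorem \ref{snake modules theorem}; equivalently, $N$ counts the maximal sub-snakes of $(i_t,k_t)_{1\le t\le T}$ that sit in Kirillov-Reshetikhin position. For $N=1$ the module $L(m)=W^{(i)}_{k,r}$ is a Kirillov-Reshetikhin module. After the substitution \eqref{variable substitution z-Y} the initial cluster variables $z_{i,r}$ of $\mathscr{A}$ are exactly the truncated $q$-characters of such modules, and Hernandez-Leclerc proved in \cite{HL16} that \emph{every} Kirillov-Reshetikhin module of $\mathscr{C}^-$ is reachable from the initial seed $(\mathbf{z},G^-)$ along an explicit sequence of mutations; hence $\chi^-_q(W^{(i)}_{k,r})$ is a cluster variable. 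That $W^{(i)}_{k,r}$ is real is classical, and can in any case be read off Theorem \ref{path description of q-characters}, since $W^{(i)}_{k,r}$ is thin and special and one checks on the non-overlapping tuples of paths that $\chi_q(W^{(i)}_{k,r})^2$ is the $q$-character of a single simple module.

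For the inductive step I would invoke the extended $T$-systems of Mukhin and Young \cite{MY12b}. Applied to a prime snake with $N\ge 2$ segments, these furnish a short exact sequence of $U_q(\widehat{\mathfrak g})$-modules, equivalently a Grothendieck-ring identity
\begin{equation*}
[L(m)]\,[L(\tilde m)] \;=\; \prod_{a}[L(m_a)] \;+\; \prod_{b}[L(m_b)],
\end{equation*}
in which $\tilde m$ and all the $m_a, m_b$ are highest weight monomials of prime snake modules with strictly fewer than $N$ segments; by Theorem \ref{factorization of snake modules} the two products on the right are honest tensor products of prime snake modules, and Remark \ref{overlapping property of snake modules} together with Theorem \ref{path description of q-characters} guarantees that all the tensor products that occur, including $L(\tilde m)$, are simple. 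Two conclusions follow at once. Reading the identity as a cluster exchange relation and using the inductive hypothesis that the prime snake modules with fewer segments are cluster variables, so their products are cluster monomials, one realizes $\chi^-_q(L(m))$ as the outcome of a single mutation from a seed of $\mathscr{A}$, hence a cluster variable. Simultaneously, since $L(\tilde m)$ and all the $L(m_a), L(m_b)$ are real by induction and $L(m)\otimes L(\tilde m)$ has the distinguished simple module of the exchange relation as a composition factor, the standard mechanism---one of the two terms of an exchange relation between real modules whose product is simple is again real---shows that $L(m)$ is real.

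The main obstacle is the step ``read the identity as a cluster exchange relation''. One must exhibit an explicit finite mutation sequence of the infinite quiver $G^-$ (Figure \ref{initial quivers A3 and B2}) after which the extended $T$-system identity coincides with the exchange relation at the mutated vertex, and check that the arrows of the mutated quiver match the shape of that identity. Concretely, the parameters $j_\ell$ of Theorem \ref{snake modules theorem}---which, by the graphic interpretation, record the distances from the columns labelled $(i_\ell,-)$ and $(i_{\ell+1},-)$ to the boundary columns of $G^-$---must be shown to count exactly the mutations carried out ``to the left'' and ``to the right'' when $L(m)$ is built out of a Kirillov-Reshetikhin seed by iterated elementary steps, and each elementary step must be checked to preserve both realness and the cluster-variable property. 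Carrying out this bookkeeping---reconciling the recursive families of Mukhin-Young snakes \cite{MY12a,MY12b} with explicit mutation sequences in $G^-$ and verifying that the induction closes on the nose---is where the real work lies.
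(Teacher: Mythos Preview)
This theorem is not proved in the present paper; it is quoted from \cite{DLL19}, so there is no proof here to compare against directly. That said, the paper gives enough information about \cite{DLL19} to see that your strategy diverges from theirs in two ways.

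For the cluster-variable assertion, \cite{DLL19} does not use the extended $T$-systems of \cite{MY12b}. As the paragraph after Theorem~\ref{prime sm cluster variables} records, \cite{DLL19} introduces a separate family of relations, the \emph{$S$-system}, designed so that its equations are literally exchange relations in $\mathscr{A}$, and the $j_\ell$ of Theorem~\ref{snake modules theorem} are tied to left/right operators built for this purpose. Your plan---induct on $N$ and read an extended $T$-system identity as a mutation---is the right shape, but you have correctly identified the obstacle: matching an arbitrary Mukhin--Young relation to a mutated quiver at the relevant vertex is precisely the hard combinatorics, and there is no reason to expect the extended $T$-system equations to be exchange relations on the nose. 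The $S$-system was engineered to close this gap; without it, your inductive step as written is a program rather than a proof.

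For realness, your ``standard mechanism'' is too vague and in danger of circularity: you are trying to deduce that $L(m)$ is real from an exchange relation whose other factors are real by induction, but the usual statement in monoidal categorification goes the other way (cluster variables correspond to real primes once the categorification is in place). The argument in \cite{DLL19}, as one sees from the proof of Theorem~\ref{SM-real modules} here, is direct: using the path description (Theorem~\ref{path description of q-characters}) one shows that $\chi_q(L(m))^2$ has $m^2$ as its unique dominant monomial, hence $L(m)\otimes L(m)$ is simple. That argument is independent of any cluster structure and should be what you aim for.
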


Moreover, in Theorem 4.1 of \cite{MY12b}, Mukhin and Young introduced a set of 3-term recurrence relations satisfied by $q$-characters of prime snake modules, called extended $T$-system, which generalizes the usual $T$-system. Moreover, in Theorem 4.1 of \cite{DLL19}, the authors introduced a system of equations satisfied by $q$-characters of prime snake modules, called $S$-system, which contains the usual $T$-system. In fact the equations in the $S$-system can be interpreted as cluster transformations in the cluster algebra $\mathscr{A}$ where the initial cluster variables correspond to certain Kirillov-Reshetikhin modules.

\subsection{Quivers with potentials}
Following \cite{HL16}, for every $i\neq j$ with $c_{ij}\neq 0$, and every $(i,r)\in \Gamma^-_0$, we have in $\Gamma^-$ an oriented cycle with length $2+|c_{ij}|$:
\begin{align*}
\xymatrix{
& (i,r) \ar[ddl] \\
&\\
(j,r+b_{ij})\ar[ddr] & (i,r+2b_{ij}+b_{ii}) \ar@{-->}[uu] \\
& \\
& (i,r+2b_{ij}) \ar[uu]}
\end{align*}
A potential $S$ is defined as the formal (infinite) sum for all these oriented cycles up to cyclic permutations, see Section 3 of \cite{DWZ08}. Hence in $\Gamma^-$, all the cyclic derivatives of $S$, introduced in Definition 3.1 of \cite{DWZ08}, are finite sums of paths. Indeed, a given arrow of $\Gamma^-$ can only occur in a finite number of summands. 

Let $R$ be the set of all cyclic derivatives of $S$. Let $J$ be the two-sided ideal of the path algebra $\mathbb{C}\Gamma^-$ generated by $R$. Following \cite{DWZ10,HL16}, one defines the Jacobian algebra $A=\mathbb{C}\Gamma^-/J$. Then $A$ is an infinite-dimensional $\mathbb{C}$-algebra.

Let $M$ be a finite-dimensional $A$-module, and $e\in \mathbb{N}^{\Gamma^-_0}$ be a dimension vector. Let $\text{Gr}_e(M)$ be the quiver Grassmannian of $M$. Thus $\text{Gr}_e(M)$ is the variety of submodules of $M$ with dimension vector $e$. This is a projective complex variety. Denote by $\chi(\text{Gr}_e(M))$ its Euler characteristic. Following~\cite{DWZ10,HL16}, define the $F$-polynomial of $M$ as a polynomial in the indeterminates $v_{i,r}$, $(i,r)\in \Gamma^-_0$, as follows:
\begin{align*}
F_M=\sum_{e\in \mathbb{N}^{\Gamma^-_0}} \chi(\text{Gr}_e(M)) \prod_{(i,r)\in \Gamma^-_0} v^{e_{i,r}}_{i,r}.
\end{align*}
It was shown in \cite{DWZ10} that for any finite-dimensional $M$, $F_M$ is a monic polynomial with constant term equal to $1$.

Following Section 4.5.2 of \cite{HL16}, let $\ell\in \mathbb{Z}_{<0}$ and let $\Gamma^-_\ell$ be the full subquiver of $\Gamma^-$ with vertex set 
\[
(\Gamma^-_0)_\ell:=\{(i,m)\in \Gamma^-_0\mid m\geq \ell\}.
\]  
Let $S_\ell$ be the sum of all cycles in the potential $S$ which only involve vertices of $(\Gamma^-_0)_\ell$, called a truncation of $S$. Let $J_\ell$ be the two-sided ideal of $\mathbb{C}\Gamma^-_\ell$ generated by all cyclic derivatives of $S_\ell$ and let
\[
A_\ell = \mathbb{C}\Gamma^-_\ell/J_\ell
\]
be the truncated Jacobian algebra at height $\ell$. Denote by $\pi:\mathbb{C}\Gamma^-_\ell \to A_\ell$ the natural projection.

It has been shown in Proposition 4.17 of \cite{HL16} that for any $\ell$, $A_\ell$ is finite-dimensional and the quiver with potential $(\Gamma^-_\ell,J_\ell)$ is rigid, namely, every cycle is cyclically equivalent to an element of $J_\ell$.  

\subsection{$q$-characters and $F$-polynomials} 
Let $m$ be a dominant monomial in the variables $Y_{i,r}$ for $(i,r)\in G^-_0$. Following \cite{FZ07,HL16}, for each $(i,r)\in G^-_0$, define
\begin{align*}
\widehat{y}_{i,r} = \prod_{(i,r)\to (j,s)} z_{j,s} \prod_{(j,s)\to(i,r)} z^{-1}_{j,s}.
\end{align*}
It was shown in Lemma 4.15 of \cite{HL16} that $\widehat{y}_{i,r}=A^{-1}_{i,r-d_i}$ for $(i,r)\in G^-_0$, so $\widehat{y}_{i,r}$ is a monomial in the variables $Y_{i,s}$, $(i,s)\in G^-_0$ by (\ref{root analogue}).

Using \cite[Corollary 6.3]{FZ07}, Hernandez and Leclerc gave the following formula for a cluster variable in terms of its $F$-polynomial and ${\bf g}$-vector. Every cluster variable $x$ of $\mathscr{A}$ has the following form
\begin{align}\label{cluster variable F and g}
x={\bf z}^{g_x} F_x({\bf \widehat{y}}).
\end{align} 

On the other hand, in \cite{FM01}, the truncated $q$-character $\chi^-_q(L(m))$ is expressed as 
\begin{align}\label{m-Pm}
\chi^-_q(L(m))= m P_m,
\end{align}
where $P_m$ is a polynomial with integer coefficients in the variables $\{A^{-1}_{i,r-d_i}\mid (i,r)\in G^-_0\}$ and has constant term $1$. Thus, by \cite{HL16}, if $L(m)$ is a cluster variable of $\mathscr{A}$, then $m={\bf z}^{g(m)}$, where the integer vector $g(m)\in \mathbb{Z}^{G^-_0}$ is the ${\bf g}$-vector of $L(m)$.

Let $I_{i,r}$ be the indecomposable injective $A$-module at vertex $(i,r)\in \Gamma^-_0$. Motivated by quivers with potentials \cite{DWZ10} and cluster character \cite{Pal08,Pal12}, Hernandez and Leclerc defined the following notion of generic kernel. 

\begin{definition}{\cite[Definition 4.5 and Section 5.2.2]{HL16}} \label{definition of generic kernel}
Let $K(m)$ be the kernel of a generic $A$-module homomorphism from the injective $A$-module $I(m)^-$ to the injective $A$-module $I(m)^+$, where 
\[
I(m)^+=\bigoplus_{g_{i,r}(m)>0} I_{i,r-d_i}^{\oplus g_{i,r}(m)}, \quad  I(m)^-=\bigoplus_{g_{i,r}(m)<0} I_{i,r-d_i}^{\oplus |g_{i,r}(m)|}.
\]
\end{definition}

The support of $K(m)$ is the collection of all points $(j,s)\in \Gamma^-_0$ such that the $(j,s)$-component of $K(m)$ is nonzero. We denote by $\text{Supp}(K(m))$ the support of $K(m)$. 

In \cite{HL16}, Hernandez and Leclerc proposed the following conjecture.
\begin{conjecture}{\cite[Conjecture 5.3]{HL16}}\label{conjecture5.3}
Let $L(m)$ be a real simple $U_q(\widehat{\mathfrak{g}})$-mdoule in $\mathscr{C}^-$. Then up to normalization, the truncated $q$-character of $L(m)$ is equal to the $F$-polynomial of the associated generic kernel. More precisely, 
\begin{align*}
\chi^-_q(L(m)) = mF_{K(m)},
\end{align*}
where the variables $v_{i,r}$ of the $F$-polynomial are evaluated as in (\ref{root analogue}). 
\end{conjecture}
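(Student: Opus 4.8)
The plan is to reduce the conjecture, for a real simple module $L(m)$, to a statement about the cluster algebra $\mathscr{A}$ and the quiver with potential $(\Gamma^-,S)$, and then to invoke the Derksen--Weyman--Zelevinsky theory of $F$-polynomials \cite{DWZ10} together with the cluster character of Palu \cite{Pal08,Pal12}, transported to the infinite quiver with potential via the height truncations $(\Gamma^-_\ell,S_\ell)$ of \cite[\S4.5.2]{HL16}. The whole argument rests on two pillars: the monoidal categorification of the full category $\mathscr{C}^-$, that is, Conjecture \ref{conj 5.2}, which I grant, and the validity of the cluster-character identity $x=\mathbf{z}^{\mathbf{g}(x)}F_{K(x)}(\widehat{\mathbf{y}})$ for the \emph{generic kernel} over the infinite-dimensional Jacobian algebra $A=\mathbb{C}\Gamma^-/J$.

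\emph{Step 1 (reduction to cluster monomials; matching $\mathbf{g}$-vectors).} By Conjecture \ref{conj 5.2}, the real simple module $L(m)$ corresponds to a cluster monomial $x_m$ of $\mathscr{A}$, and under the ring isomorphism $\chi^-_q\colon K_0(\mathscr{C}^-)\xrightarrow{\sim}\mathscr{A}$ of \cite[Theorem 5.1]{HL16} one has $\chi^-_q(L(m))=x_m$. It therefore suffices to show that $g(m)$ is the $\mathbf{g}$-vector of $x_m$ and that, after the evaluation $v_{i,r}=A^{-1}_{i,r}$ of (\ref{root analogue}), $F_{K(m)}$ equals the $F$-polynomial of $x_m$. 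For the $\mathbf{g}$-vector, by (\ref{m-Pm}) we have $\chi^-_q(L(m))=mP_m$ with $P_m$ a polynomial of constant term $1$ in the variables $A^{-1}_{i,r-d_i}=\widehat{y}_{i,r}$ (using \cite[Lemma 4.15]{HL16}); by the discussion following (\ref{m-Pm}) (quoting \cite{HL16}), if $x_m$ is a cluster variable then $m=\mathbf{z}^{g(m)}$ with $g(m)$ the $\mathbf{g}$-vector, and the cluster-monomial case follows by multiplicativity of (\ref{cluster variable F and g}). Hence $g(m)=\mathbf{g}(x_m)$ and $P_m=F_{x_m}(\widehat{\mathbf{y}})$.

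\emph{Step 2 (matching $F_{x_m}$ with $F_{K(m)}$).} This is the heart of the matter. Applying \cite{DWZ10} to each finite rigid quiver with potential $(\Gamma^-_\ell,S_\ell)$ \cite[Prop.\ 4.17]{HL16} together with the cluster character of \cite{Pal08,Pal12}, and then passing to the limit $\ell\to-\infty$ as in \cite[\S4.5.2]{HL16}, the cluster monomial $x_m$ is expressed as $\mathbf{z}^{\mathbf{g}(x_m)}F_N(\widehat{\mathbf{y}})$, where $N$ is the rigid $A$-module attached to $x_m$ (a direct sum of the rigid modules of the cluster variables dividing $x_m$), $F_N$ is its $F$-polynomial, and, by construction, $N$ is the kernel of a generic homomorphism $I_0\to I_1$, where $0\to N\to I_0\to I_1$ is the minimal injective copresentation of $N$. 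The minimal injective copresentation of a module attached to a cluster variable (or monomial) has $I_0$ and $I_1$ without common indecomposable summand, so the identity $[I_1]-[I_0]=\mathbf{g}(x_m)=g(m)$ forces $I_0=I(m)^-$ and $I_1=I(m)^+$ in the notation of Definition \ref{definition of generic kernel}; hence $N=K(m)$. Combining with Step 1 and evaluating $v_{i,r}=A^{-1}_{i,r}$ gives $\chi^-_q(L(m))=m\,F_{K(m)}$. When $L(m)$ is prime, $N=K(m)$ is indecomposable; otherwise it is the direct sum just described, matching the formulation of the first main theorem.

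\emph{The main obstacle.} All the difficulty lies in the two pillars. First, Step 1 needs Conjecture \ref{conj 5.2}, i.e.\ the monoidal categorification of the \emph{entire} category $\mathscr{C}^-$; one would approach this through the known monoidal categorifications of the subcategories $\mathscr{C}_\ell$ for Dynkin types $\mathbb{A,D,E}$ together with an exhaustion argument compatible with the truncations, but carrying this out uniformly (and in type $\mathbb{B}$) is already substantial. Second, and more seriously, Step 2 requires the Palu cluster character and the DWZ $F$-polynomial formula to hold for the \emph{infinite} quiver with potential $(\Gamma^-,S)$ --- whose Jacobian algebra is infinite-dimensional --- and for \emph{all} reachable rigid $A$-modules, together with the identification of the resulting module $N$ with Hernandez--Leclerc's generic kernel. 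Concretely, one must prove that the height-truncation device of \cite[\S4.5.2]{HL16} computes $F_{K(m)}$ correctly and uniformly in $\ell$ for every $m$, not just for the snake modules, where the relevant representation is thin and becomes fully explicit at a finite height determined by $m$. It is exactly this uniform control of the limit, and the generic-kernel identification in full generality, that remains open and that the present paper settles only in the snake case.
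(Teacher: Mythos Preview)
The statement you are attempting to prove is \emph{not} proved in the paper: it is recorded there as Conjecture~\ref{conjecture5.3}, quoted verbatim from \cite{HL16}, and the paper establishes only the special case of snake modules (Theorem~\ref{theorem of geometric formula for sm}). So there is no ``paper's own proof'' to compare against for the full statement.

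Your argument is, as you yourself flag, conditional on two open inputs, and this is the genuine gap. Step~1 assumes Conjecture~\ref{conj 5.2}, which is not known in general; without it you have no way to identify $\chi^-_q(L(m))$ with a cluster monomial, and the rest of the argument does not start. Step~2 assumes that the DWZ/Palu/Plamondon machinery applies uniformly over the infinite Jacobian algebra $A$ for every reachable rigid module; the paper (following \cite{HL16}) only justifies this via the height truncations $(\Gamma^-_\ell,S_\ell)$ once one already knows the module in question is a cluster variable reachable inside some finite $\mathscr{A}_\ell$. So your ``proof'' is really a reduction of Conjecture~\ref{conjecture5.3} to Conjecture~\ref{conj 5.2} plus a uniform cluster-character statement, neither of which is available.

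It is worth noting that your Step~2 is essentially the argument the paper uses to prove the snake-module case. The difference is that for prime snake modules the input to Step~1 is supplied unconditionally by Theorem~\ref{prime sm cluster variables} (from \cite{DLL19}): $\chi^-_q(L(m))$ \emph{is} a cluster variable of $\mathscr{A}$. The paper then computes the $\mathbf{g}$-vector explicitly (Proposition~\ref{g-vectors}), passes to a finite truncation $\mathscr{A}_\ell$ with $\ell\ll 0$, invokes rigidity of $(\Gamma^-_\ell,S_\ell)$ and \cite[Remark~4.1]{P12} to identify the $F$-polynomial with $F_M$ for $M$ the generic kernel between the injectives dictated by the $\mathbf{g}$-vector, and finally takes the direct limit $A=\varinjlim A_\ell$ to get $M=K(m)$. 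In other words, your outline is correct \emph{once one has the cluster-variable property}; the paper's contribution is precisely to supply that property for snake modules and then run your Step~2.
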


In Theorem 4.8 of \cite{HL16}, Hernandez and Leclerc proved Conjecture \ref{conjecture5.3} for Kirillov-Reshetikhin modules, that is, up to renormalizing, the truncated $q$-character of the Kirillov-Reshetikhin module $W^{(i)}_{k,r-d_i(2k-1)}$ is equal to the $F$-polynomial of the generic kernel $F_{K^{(i)}_{k,r}}$, where $K^{(i)}_{k,r}$ is the kernel of a generic $A$-module homomorphism from $I_{i,r}$ to $I_{i,r-kb_{ii}}$. We will prove Conjecture \ref{conjecture5.3} for snake modules in Theorem \ref{theorem of geometric formula for sm}.

\subsection{A formula for the lowest weight monomial}
Recall that $t=\max\{d_i\mid i\in I\}$ as defined in Section \ref{section2.1}. As a generalization of Remark 4.14 of \cite{HL16}, we can calculate the dimension vectors of the $A$-module $K^{(i)}_{k,r}$ for $r\leq d_i(2k-1)-th^\lor$. Indeed, by Lemma 6.8 and Corollary 6.9 of \cite{FM01}, the lowest monomial of $\chi_q(\prod_{s=1}^k Y_{i,r-d_i(2s-1)})$ is equal to $\prod_{s=1}^k Y^{-1}_{v(i),r-d_i(2s-1)+th^\lor}$, where $v$ is the involution of $I$ defined by $w_0(\alpha_i)=-\alpha_{v(i)}$, where $w_0$ is the longest element in the Weyl group of $\mathfrak{g}$. Using Theorem 4.8 of \cite{HL16}, we can calculate the lowest monomial, which corresponds to the term in the $F$-polynomial for the trivial submodule $K^{(i)}_{k,r}\subset K^{(i)}_{k,r}$. Thus
\begin{align*}\label{dimension vector}
\prod_{s=1}^k Y^{-1}_{v(i),r-d_i(2s-1)+th^\lor}=\left(\prod_{s=1}^k Y_{i,r-d_i(2s-1)}\right) \prod_{(j,s)\in \Gamma^-_0} v^{d_{j,s}(K^{(i)}_{k,r})}_{j,s},
\end{align*}
where $(d_{j,s}(K^{(i)}_{k,r}))_{(j,s)\in \mathbb{N}^{\Gamma^-_0}}$ is the dimension vector of $K^{(i)}_{k,r}$.

In the next section, we will introduce a combinatorial method to calculate the dimension vector of the $A$-module $K(m)$ associated to the snake module $L(m)$.   

\section{A geometric character formula for snake modules}\label{GFSM}
In this section, we show that the geometric character formula conjectured by Hernandez and Leclerc holds for snake modules of types $\mathbb{A}$ and $\mathbb{B}$. We give a combinatorial formula for the $F$-polynomial of the generic kernel $K(m)$ associated to the snake module $L(m)$. As a consequence, we obtain a combinatorial method to compute the dimension vector of $K(m)$ as well as all its submodules.

\subsection{A geometric character formula for snake modules}

We first give a description of the ${\bf g}$-vector $g(m):=(g_{i,s})_{(i,s)\in G^-_0}$ for arbitrary prime snake module $L(m)$.

\begin{proposition}\label{g-vectors}
Let $L(m)$ be a prime snake module with highest weight monomial $m$ of the form~(\ref{the dominant monomial of snake modules}). Then we can rewrite
\[
m = {\bf z}^{g(m)} := \prod_{(i,s)\in G^-_0} z^{g_{i,s}(m)}_{i,s}, 
\]
where 
\begin{align*}
g_{i,s}(m)= \begin{cases}
1  & \text{if } (i,s)=(i_j,r+\sum_{\ell=1}^{j-1}n_{\ell}) \text{ and } r+\sum_{\ell=1}^{j-1}n_{\ell}\leq 0, \\
-1 & \text{if } (i,s)=(i_j,r+\sum_{\ell=1}^{j-1}n_{\ell}+b_{i_{j}i_{j}}k_j)  \text{ and }r+\sum_{\ell=1}^{j-1}n_{\ell}+b_{i_{j}i_{j}}k_j\leq 0, \\
0  & \text{otherwise}.
\end{cases}
\end{align*}
Here $j=1,\cdots,N$ as in (\ref{the dominant monomial of snake modules}). 
\end{proposition}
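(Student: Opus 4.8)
The plan is to compute the $\mathbf{z}$-exponent vector of $m$ directly from the telescoping identity $z_{i,r}/z_{i,r+b_{ii}} = Y_{i,r}$ recorded just after \eqref{variable substitution z-Y}. The key observation is that the factor $\prod_{s=0}^{k_j-1} Y_{i_j,\,r+b_{i_ji_j}s+\sum_{\ell=1}^{j-1}n_\ell}$ appearing in \eqref{the dominant monomial of snake modules} is, by definition \eqref{KR expression}, the highest weight monomial of a Kirillov--Reshetikhin module $W^{(i_j)}_{k_j,\,a_j}$ with $a_j = r+\sum_{\ell=1}^{j-1}n_\ell$. So it suffices to treat each KR factor separately and then add exponents. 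For a single KR factor, writing $c = a_j$ and $b = b_{i_ji_j}$, one has the telescoping product
\begin{align*}
\prod_{s=0}^{k_j-1} Y_{i_j,\,c+bs} = \prod_{s=0}^{k_j-1} \frac{z_{i_j,\,c+bs}}{z_{i_j,\,c+b(s+1)}} = \frac{z_{i_j,\,c}}{z_{i_j,\,c+bk_j}},
\end{align*}
which is exactly a $+1$ in coordinate $(i_j,c)$ and a $-1$ in coordinate $(i_j,c+bk_j)$, matching the claimed formula once we account for which of these lie in $G^-_0$.

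\textbf{Main steps.} First I would fix $j$ and apply the telescoping identity inside the column of vertices labelled $i_j$; this requires knowing that all the intermediate vertices $(i_j, c+bs)$ for $1 \le s \le k_j-1$ are genuine vertices of the quiver, i.e. that consecutive occurrences differ by $b_{i_ji_j}$ along the column — this is precisely the spacing built into \eqref{KR expression} and the structure of $G$, so the identity $z_{i,r}/z_{i,r+b_{ii}} = Y_{i,r}$ applies at each step and the product telescopes. Second, I would observe that the substitution \eqref{variable substitution z-Y} is only defined for $(i,r) \in G^-_0$, i.e. for $r \le 0$; when an endpoint $(i_j, c)$ or $(i_j, c+bk_j)$ has positive second coordinate, the corresponding $z$-variable is not among the initial cluster variables and must be re-expanded. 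However, since $m$ is assumed to be a monomial in the variables $Y_{i,r}$ with $(i,r) \in \mathbf{Y}^-$ (i.e. $L(m) \in \mathscr{C}^-$), and the $Y$-variables that survive are exactly the ones with non-positive index, the net effect is that a $z$-endpoint with positive index simply does not contribute a $\pm 1$: its contribution is absorbed into (or rather, was never present among) the $Y$-variables that make up $m$. This is the content of the case distinctions "$r+\sum_{\ell=1}^{j-1}n_\ell \le 0$" and "$r+\sum_{\ell=1}^{j-1}n_\ell + b_{i_ji_j}k_j \le 0$" in the statement. Third, I would sum over $j = 1, \dots, N$; since distinct KR factors occupy (generically) distinct columns or at least distinct vertices — and in any case the exponent vector of a product is the sum of exponent vectors — the contributions add without interference, giving the stated piecewise formula for $g_{i,s}(m)$.

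\textbf{The main obstacle.} The subtle point is the boundary case: carefully justifying that when $c + bk_j > 0$ (so the "bottom" endpoint of a KR factor lies outside $G^-_0$) the formula should record $g = 0$ at that putative vertex rather than something nonzero. One must argue that $m$, being by hypothesis a Laurent monomial purely in $\{Y_{i,r} : (i,r) \in \mathbf{Y}^-\}$, forces the telescoping to "stop" at the boundary: formally, one truncates the product $\prod_{s=0}^{k_j-1} Y_{i_j,\,c+bs}$ to those $s$ with $c + bs \le 0$, and the telescoping of that truncated product again collapses to $z_{i_j,c}^{+1}$ times $z_{i_j,\,c'}^{-1}$ where $c'$ is the first index exceeding $0$; but since $z_{i_j,c'}$ with $c' > 0$ is not an initial variable, one checks (using $\chi^-_q$ and Proposition 3.10 of \cite{HL16}, or directly the definition of $\mathscr{C}^-$) that this spurious factor does not appear, leaving only the claimed terms. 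I would handle this by first proving the identity as an identity of Laurent monomials in \emph{all} the $z_{i,r}$, and then invoking the fact that $L(m) \in \mathscr{C}^-$ to discard the out-of-range factors; this cleanly separates the elementary telescoping computation from the representation-theoretic input. A secondary, purely bookkeeping obstacle is verifying that the two endpoint indices $r + \sum_{\ell=1}^{j-1} n_\ell$ and $r + \sum_{\ell=1}^{j-1} n_\ell + b_{i_ji_j} k_j$ really do land on vertices of $G^-$ (i.e. satisfy the parity/column constraints defining $G_0$) whenever they are $\le 0$; this follows from Theorem \ref{snake modules theorem} and the explicit form of the $n_\ell$, but should be stated explicitly.
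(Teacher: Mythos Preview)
Your approach is correct and in fact more self-contained than the paper's. The paper's proof is essentially a two-line citation: it observes (as you do) that $m$ factors as a product of Kirillov--Reshetikhin highest weight monomials, and then invokes Theorem~\ref{prime sm cluster variables} together with Proposition~4.16 of \cite{HL16} to import the $\mathbf{g}$-vector of each KR factor. You instead recompute that KR $\mathbf{g}$-vector from scratch via the telescoping identity $Y_{i,r}=z_{i,r}/z_{i,r+b_{ii}}$, which is precisely what underlies Proposition~4.16 of \cite{HL16}. So the two proofs share the same skeleton (reduce to KR factors, add up contributions) but differ in whether the KR step is quoted or redone.

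What your route buys is that it does not require knowing in advance that $\chi^-_q(L(m))$ is a cluster variable: the identity $m=\mathbf{z}^{g(m)}$ is purely a change of variables and your argument establishes it as such. What the paper's route buys is brevity and an automatic interpretation of $g(m)$ as the genuine cluster-theoretic $\mathbf{g}$-vector, which is how it is used downstream in Theorem~\ref{theorem of geometric formula for sm}.

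One small cleanup: your handling of the boundary case is slightly roundabout. Rather than telescoping over all of $\mathbb{Z}$ and then arguing that out-of-range $z$'s ``do not contribute,'' it is cleaner to use directly that for the top vertex $(i,r_0)$ of a column of $G^-$ one has $z_{i,r_0}=Y_{i,r_0}$ by definition \eqref{variable substitution z-Y} (since $k_{i,r_0}=1$). Then the telescoping product $\prod_{s=0}^{k_j-1}Y_{i_j,c+bs}$ terminates cleanly at $z_{i_j,c}$ with no spurious denominator whenever $c+bk_j>0$, and no appeal to $\mathscr{C}^-$ or truncation is needed at this step. Your ``secondary bookkeeping obstacle'' about the endpoints lying in $G^-_0$ is handled in the paper simply by the hypothesis $L(m)\in\mathscr{C}^-$, which forces every $Y$-index in \eqref{the dominant monomial of snake modules} to lie in $G^-_0$; you should state this rather than re-derive the parity constraints from the form of the $n_\ell$.
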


\begin{proof}
From Theorem \ref{snake modules theorem}, it follows that every prime snake module $L(m)$ is a $U_q(\widehat{\mathfrak{g}})$-module with highest weight monomial $m$ of the form (\ref{the dominant monomial of snake modules}).  Thus $m$ is a product of terms of the form 
\begin{align*} 
\prod_{s=0}^{k_{j}-1} Y_{i_j,r+\sum_{\ell=1}^{j-1}n_{\ell}+b_{i_{j}i_{j}}s}.
\end{align*}

Because of (\ref{KR expression}), for any $1\leq j\leq N$,
\[
L\left(\prod_{s=0}^{k_{j}-1} Y_{i_j,r+\sum_{\ell=1}^{j-1}n_{\ell}+b_{i_{j}i_{j}}s}\right)
\]
is a Kirillov-Reshitikhin module. Now the result follows from Theorem \ref{prime sm cluster variables} and Proposition 4.16 of~\cite{HL16}. 
\end{proof}

We are now ready for the main result of this section. The following theorem gives a positive answer to the Hernandez-Leclerc Conjecture (Conjecture \ref{conjecture5.3}) for snake modules.
\begin{theorem}\label{theorem of geometric formula for sm}
Let $L(m)$ be a prime snake module in $\mathscr{C}^-$. Then up to normalization,  the truncated $q$-character of $L(m)$ is equal to the $F$-polynomial of the associated generic kernel $K(m)$. More precisely, 
\begin{align*}
\chi^-_q(L(m)) = m F_{K(m)},
\end{align*}
where $F_{K(m)}$ is a polynomial in the variables (\ref{root analogue}). 
\end{theorem}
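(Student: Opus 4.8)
The plan is to decompose the statement into two parts: first, identify $\chi^-_q(L(m))$ with a cluster variable of $\mathscr{A}$ whose $\mathbf{g}$-vector realizes the monomial $m$; second, identify the $F$-polynomial of that cluster variable with the $F$-polynomial of the generic kernel $K(m)$. The first part is essentially already in place, and the substance of the argument is the second.

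For the first part, since $L(m)$ is a prime snake module, Theorem~\ref{prime sm cluster variables} gives that $\chi^-_q(L(m))$ is a cluster variable $x_m$ of $\mathscr{A}$. By the Fomin--Zelevinsky formula (\ref{cluster variable F and g}) we may write $x_m = {\bf z}^{g_{x_m}} F_{x_m}(\widehat{y})$, and Proposition~\ref{g-vectors} identifies the $\mathbf{g}$-vector of $x_m$ with $g(m)$, so that ${\bf z}^{g_{x_m}} = m$. Since $\widehat{y}_{i,r} = A^{-1}_{i,r-d_i}$ by Lemma~4.15 of \cite{HL16}, this already yields $\chi^-_q(L(m)) = m\, F_{x_m}(\widehat{y})$ with the variables of $F_{x_m}$ evaluated as in (\ref{root analogue}); comparing with (\ref{m-Pm}) gives $P_m = F_{x_m}$. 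It therefore remains to prove that $F_{x_m} = F_{K(m)}$ as polynomials in the indeterminates $v_{i,r}$ (under the re-indexing $\Gamma^-_0 \leftrightarrow G^-_0$, $(i,r)\leftrightarrow(i,r-d_i)$).

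For the second part I would pass to a finite truncation. The snake of $L(m)$ occupies a bounded region of $G^-$, and by the explicit form of $g(m)$ in Proposition~\ref{g-vectors} the injectives $I(m)^{\pm}$ involve only finitely many vertices; hence one can choose $\ell\in\mathbb{Z}_{<0}$ small enough that $g(m)$ and $\text{Supp}(K(m))$ lie in $(\Gamma^-_0)_\ell$ and that a mutation sequence producing $x_m$ from the initial seed stays within $\Gamma^-_\ell$. Over the truncated Jacobian algebra $A_\ell$, which is finite-dimensional with rigid potential $S_\ell$ by Proposition~4.17 of \cite{HL16}, the representation-theoretic description of cluster variables from \cite{DWZ10} and the cluster character of \cite{Pal08,Pal12} apply: $x_m$ is obtained by iterated mutation of the trivial decorated representation, and $F_{x_m}$ is the generating function of Euler characteristics of the quiver Grassmannians of the kernel of a \emph{generic} $A_\ell$-module homomorphism $I(m)^- \to I(m)^+$. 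By Definition~\ref{definition of generic kernel} that kernel is precisely $K(m)$, which is an $A_\ell$-module by the choice of $\ell$, so $F_{x_m} = F_{K(m)}$ and we conclude $\chi^-_q(L(m)) = m\, F_{K(m)}$. A more self-contained route to $F_{x_m}=F_{K(m)}$ is an induction on the length of the snake: the base case of Kirillov--Reshetikhin modules is Theorem~4.8 of \cite{HL16}, and the inductive step uses the $S$-system of \cite{DLL19}, whose equations are exchange relations in $\mathscr{A}$ and can be matched term by term with the corresponding mutations of generic kernels.

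The main obstacle is making precise the identification of the Hernandez--Leclerc generic kernel $K(m)$ with the module that quiver-with-potential theory attaches to $x_m$. Two points need care: (i) the truncation $A_\ell$ must lose nothing, i.e. no dominant monomial contributing to $\chi^-_q(L(m))$ and no submodule relevant to $F_{K(m)}$ should involve vertices below height $\ell$ — this follows from the bounded support of the snake and of $K(m)$ together with the compatibility of truncation with $\chi^-_q$ (Proposition~4.17 and Section~4.5.2 of \cite{HL16}); and (ii) one must know that the \emph{generic} homomorphism $I(m)^- \to I(m)^+$, and not merely a special one, computes the cluster variable, which rests on the rigidity of $(\Gamma^-_\ell, S_\ell)$ and the irreducibility of the relevant stratum of the representation variety. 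Finally, the statement for arbitrary (not necessarily prime) snake modules follows from the factorization of Theorem~\ref{factorization of snake modules} by replacing $K(m)$ with the direct sum of the generic kernels of the prime tensor factors, as recorded in Remark~\ref{snake modules geometric character formula remark}.
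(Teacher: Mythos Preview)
Your proposal is correct and follows essentially the same route as the paper: identify $\chi^-_q(L(m))$ with a cluster variable via Theorem~\ref{prime sm cluster variables}, read off the $\mathbf{g}$-vector from Proposition~\ref{g-vectors}, pass to a finite truncation $A_\ell$ where the quiver with potential is rigid, and then invoke the representation-theoretic description of $F$-polynomials to match $F_{x_m}$ with $F_{K(m)}$. The paper pins the key identification (that the module attached to $x_m$ is the kernel of a \emph{generic} element of $\Hom(I^\ell(m)^-,I^\ell(m)^+)$) specifically to Remark~4.1 of \cite{P12} (Plamondon), rather than to \cite{Pal08,Pal12} and \cite{DWZ10} alone, and then uses the direct-limit compatibility $A=\varinjlim A_\ell$ from \cite{HL16} to pass from $A_\ell$ to $A$; your alternative inductive route via the $S$-system is not used in the paper.
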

\begin{proof}
Recall that $\mathscr{A}$ is the cluster algebra defined in Section \ref{section2.1}. We use the characterization of $m$ from Theorem \ref{snake modules theorem}. The fact that $L(m) \in \mathscr{C}^-$ implies that each index of $Y$ in the formula (\ref{the dominant monomial of snake modules}) is a vertex in $G^-_0$. This implies that for some integer $N$, 
\begin{align}\label{star}
(i_N,r+\sum_{\ell=1}^{N-1}n_{\ell}+b_{i_Ni_N}(k_N-1))\in G^-_0. 
\end{align}
In particular, the second coordinate of (\ref{star}) is non-positive. Thus $r+\sum_{\ell=1}^{j-1}n_{\ell}+b_{i_ji_j}(k_j-1)\leq 0$ for all $j=1,\cdots,N$.

By Theorem \ref{prime sm cluster variables}, the truncated $q$-character $\chi^-_q(L(m))$ is a cluster variable $x$ of $\mathscr{A}$. By Proposition~\ref{g-vectors}, the ${\bf g}$-vector of $x$ is given by 
\begin{align*}
g_{i,s}(m)= \begin{cases}
1  & \text{if } (i,s)=(i_j,r+\sum_{\ell=1}^{j-1}n_{\ell}), \\
-1 & \text{if } (i,s)=(i_j,r+\sum_{\ell=1}^{j-1}n_{\ell}+b_{i_{j}i_{j}}k_j) \text{ and }r+\sum_{\ell=1}^{j-1}n_{\ell}+b_{i_{j}i_{j}}k_j\leq 0, \\
0  & \text{otherwise},
\end{cases}
\end{align*} 
where we use that $r+\sum_{\ell=1}^{j-1}n_{\ell}\leq 0$, because of (\ref{star}).

For $\ell<0$, let $(G^-_0)_\ell:=\{(i,r+d_i):(i,r)\in (\Gamma^-_0)_\ell\}$ and ${\bf z}^-_\ell=\{z_{i,r}\mid (i,r)\in (G^-_0)_\ell\}$. We denote by $G^-_\ell$ the same quiver as $\Gamma^-_\ell$, but with vertices labeled by $(G^-_0)_\ell$. Clearly, the cluster variable $x$ is a Laurent polynomial in the variables of ${\bf z}^-_\ell$ for some $\ell\ll 0$, and can be regarded as a cluster variable of the cluster algebra $\mathscr{A}_\ell$ defined by the initial seed $({\bf z}^-_\ell,G^-_\ell)$. 

The rest of the proof is similar to the proof of Theorem 4.8 in \cite{HL16}. Since the quiver with potential $(\Gamma^-_\ell,J_\ell)$ is rigid, we can apply the theory of \cite{DWZ08,DWZ10} and deduce that the $F$-polynomial of $x$ coincides with the polynomial $F_M$ associated with a certain $A_\ell$-module $M$. Futhermore $M$ is rigid by \cite{FK10,Am09}.
 
By Remark 4.1 of \cite{P12}, $M$ is the kernel of a generic element of the homomorphism space between two injective $A_\ell$-modules corresponding to the negative and positive components of the ${\bf g}$-vector of $x$. More precisely, let $I^\ell_{i,m}$ be the injective $A_\ell$-module at vertex $(i,m)$, then $M$ is the kernel of a generic element of $\text{Hom}(I^\ell(m)^-,I^\ell(m)^+)$, where
\[
I^\ell(m)^+= \bigoplus_{g_{i,s}(m)>0} {I^\ell_{i,s-d_i}}^{\oplus g_{i,s}(m)}, \quad  I^\ell(m)^-=\bigoplus_{g_{i,s}(m)<0} {I^\ell_{i,s-d_i}}^{\oplus |g_{i,s}(m)|}.
\]

It was shown in \cite{HL16} that our $A_\ell$-module $M$ does not change when $\ell$ increases and that in the direct limit 
\[
A=\lim_{\ell \to -\infty} A_\ell.
\]
The $A$-module $M$ is the kernel of a generic element of $\text{Hom}(I(m)^-,I(m)^+)$. Thus $M=K(m)$.
\end{proof}

From the proof of Theorem \ref{theorem of geometric formula for sm} we obtain the following corollay.
\begin{corollary}\label{Km rigid}
Let $L(m)$ be a prime snake module in $\mathscr{C}^-$. Then the generic kernel $K(m)$ is rigid and indecomposable.
\end{corollary}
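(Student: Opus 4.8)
The corollary asserts that the generic kernel $K(m)$ of a prime snake module is rigid and indecomposable. The rigidity is essentially already established inside the proof of Theorem~\ref{theorem of geometric formula for sm}: there we identified $K(m)$ with the $A_\ell$-module $M$ arising (via the theory of \cite{DWZ08,DWZ10,P12}) as the module whose $F$-polynomial realizes the cluster variable $x = \chi^-_q(L(m))$, and we noted that $M$ is rigid by \cite{FK10,Am09}. So the plan for the rigidity half is simply to invoke that identification: since cluster variables of an acyclic (or, more generally, a rigid quiver-with-potential) cluster algebra correspond to rigid indecomposable decorated representations, and $(\Gamma^-_\ell,J_\ell)$ is rigid by Proposition~4.17 of \cite{HL16}, the module $M = K(m)$ inherits rigidity; passing to the direct limit $A = \varprojlim A_\ell$ does not affect this because $M$ stabilizes.

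For the indecomposability, the key point is that $\chi^-_q(L(m))$ is a single cluster \emph{variable}, not merely a cluster monomial. First I would recall the general principle from the additive categorification of cluster algebras (Derksen--Weyman--Zelevinsky, or Plamondon in the infinite-rank/pro-finite setting of \cite{P12}): cluster variables correspond precisely to the \emph{indecomposable} rigid objects, while proper cluster monomials $x_{i_1}^{a_1}\cdots x_{i_k}^{a_k}$ correspond to direct sums $M_{i_1}^{\oplus a_1}\oplus\cdots\oplus M_{i_k}^{\oplus a_k}$ of the corresponding indecomposables. Concretely: if $K(m) = M' \oplus M''$ were a nontrivial direct sum of rigid $A$-modules, then the mutation-compatibility of $F$-polynomials and $\mathbf{g}$-vectors would force $x = {\bf z}^{g(m)} F_{K(m)}({\bf \widehat y})$ to factor as a product $x' x''$ of two cluster variables lying in a common cluster, contradicting that $x$ is itself a cluster variable appearing in some seed (a cluster variable cannot be a proper product of two others, since cluster monomials are linearly independent and each cluster variable is primitive in the cluster algebra). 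Since $K(m)$ is the kernel of a \emph{generic} map between injectives with $\mathbf{g}$-vector as in Proposition~\ref{g-vectors}, Theorem~\ref{prime sm cluster variables} guarantees $x$ is a genuine cluster variable, so this contradiction rules out any decomposition.

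An alternative, more self-contained route to indecomposability uses the combinatorial model directly: by Theorem~\ref{theorem of geometric formula for sm} and Remark~\ref{overlapping property of snake modules}, the $F$-polynomial $F_{K(m)}$ is the generating function of non-overlapping $T$-tuples of Mukhin--Young paths, and the primality condition (consecutive paths $p^+_{i_t,k_t}$ and $p^-_{i_{t-1},k_{t-1}}$ overlap) is exactly what makes the support of $K(m)$ connected as a subquiver of $\Gamma^-$; a module whose support quiver is connected and which is rigid over a Jacobian algebra of a quiver with potential is indecomposable, since $\operatorname{End}(K(m))$ would otherwise contain a nontrivial idempotent splitting the support. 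I would likely present the first (categorical) argument as the main line since it is shortest given what is already cited, and mention the path-model description as the source of connectedness of $\operatorname{Supp}(K(m))$, which is where the hypothesis ``$L(m)$ prime'' is genuinely used.

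The main obstacle I anticipate is making the link ``cluster variable $\Rightarrow$ indecomposable corresponding object'' fully precise in the \emph{infinite-rank} setting: the cluster algebra $\mathscr{A}$ has infinite rank, and the additive categorification is only available after truncating to $\mathscr{A}_\ell$ at height $\ell \ll 0$. One must check that being indecomposable at the truncated level $A_\ell$ is inherited by the limit $A$-module $K(m)$ — but this is immediate because $K(m)$ is literally equal (not just isomorphic) to the $A_\ell$-module $M$ for all $\ell$ sufficiently negative, as established in the proof of Theorem~\ref{theorem of geometric formula for sm}, and $\operatorname{End}_{A}(K(m)) = \operatorname{End}_{A_\ell}(M)$ once the support of $K(m)$ is contained in $(\Gamma^-_0)_\ell$. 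So the corollary follows by combining the rigidity noted in the proof of Theorem~\ref{theorem of geometric formula for sm} with the fact, from Theorem~\ref{prime sm cluster variables}, that $\chi^-_q(L(m))$ is an indecomposable-type cluster variable rather than a proper cluster monomial.
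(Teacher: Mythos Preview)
Your proposal is correct and matches the paper's approach, which is extremely terse: the paper simply says ``From the proof of Theorem~\ref{theorem of geometric formula for sm} we obtain the following corollary'' with no further argument. You have correctly unpacked what is implicit there --- rigidity comes from the line in that proof invoking \cite{FK10,Am09}, and indecomposability from the fact (Theorem~\ref{prime sm cluster variables}) that $\chi^-_q(L(m))$ is a cluster \emph{variable}, so its associated object in the categorification is indecomposable rigid rather than a proper direct sum. Your discussion of the truncation issue and the alternative support-connectedness argument go beyond what the paper provides, but both are sound and the latter nicely explains where the ``prime'' hypothesis enters.
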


\begin{remark}\label{snake modules geometric character formula remark}
By Theorem \ref{factorization of snake modules}, every snake module of type $\mathbb{A}_n$ or type $\mathbb{B}_n$ is isomorphic to a tensor product of prime snake modules defined uniquely up to permutation. On the other hand, if $M$ and $N$ are two finite-dimensional $A$-modules, then by Proposition 3.2 of \cite{DWZ10} we have $F_{M\oplus N}=F_MF_N$. Therefore, replacing the module $K(m)$ in Theorem \ref{theorem of geometric formula for sm} by a direct sum, we obtain a similar geometric character formula for arbitrary snake module of types $\mathbb{A}_n$ and $\mathbb{B}_n$.
\end{remark}
 
We present several examples to illustrate Theorem \ref{theorem of geometric formula for sm}.
\begin{example}\label{exampleA31}
In type $\mathbb{A}_3$, let $N=3$, $k_1=1,k_2=2,k_3=1$, $i_1=1, i_2=3, i_3=2$, $r=-15$, $j_1=j_2=0$, $n_1=4$, and $n_2=5$. Then $m=Y_{1,-15} Y_{3,-11} Y_{3,-9} Y_{2,-6}$. We get

\begin{align*}
g_{i,s}(m)= \begin{cases}
1  & \text{if } (i,s)=(1,-15), (3,-11), \text{ or } (2,-6); \\
-1 & \text{if } (i,s)=(1,-13), (3,-7), \text{ or } (2,-4); \\
0  & \text{otherwise}.
\end{cases}
\end{align*}
Thus by Definition \ref{definition of generic kernel}
\begin{align*}
I(m)^+= I_{1,-16}  \oplus I_{3,-12}\oplus I_{2,-7}, \quad  I(m)^-= I_{1,-14}  \oplus I_{3,-8} \oplus I_{2,-5}.
\end{align*}

The module $K(m)$ has dimension 13 and is displayed in Figure \ref{example1}. In Figure \ref{example1}, all vertices carry a vector space of dimension $1$. Applying Theorem \ref{theorem of geometric formula for sm}, we can compute its $q$-character as follows. There are 160 submodules in $K(m)$. 

\begin{align*}
\chi^-_q(L(m)) &= m ((1+v_{1,-14} + v_{1,-14}v_{2,-13})(v_{3,-8}+v_{3,-10}v_{3,-8} + v_{2,-5}v_{2,-7}v_{3,-8}\\
& +v_{2,-5}v_{2,-7}v_{3,-8}v_{3,-10}+ v_{1,-4}v_{1,-6} v_{2,-5}v_{2,-7}v_{3,-8}  \\
& + v_{1,-4}v_{1,-6} v_{2,-5}v_{2,-7}v_{3,-8}v_{3,-10}+ v_{2,-5}v_{2,-7}v_{2,-9}v_{3,-8}v_{3,-10}\\
& + v_{1,-4}v_{1,-6} v_{2,-5}v_{2,-7}v_{2,-9}v_{3,-8}v_{3,-10} \\
& + v_{1,-4}v_{1,-6}v_{1,-8} v_{2,-5}v_{2,-7}v_{2,-9}v_{3,-8}v_{3,-10}) \\
& + v_{1,-14}v_{2,-13}v_{3,-12}v_{3,-10}v_{3,-8} (v_{2,-5}v_{2,-7} + v_{2,-5}v_{2,-7}v_{2,-9}  \\
& + v_{1,-4}v_{1,-6} v_{2,-5}v_{2,-7}+v_{1,-4}v_{1,-6} v_{2,-5}v_{2,-7}v_{2,-9} \\
& + v_{1,-4}v_{1,-6}v_{1,-8} v_{2,-5}v_{2,-7}v_{2,-9}) \\
& + (1+v_{1,-14} + v_{1,-14}v_{2,-13} + v_{1,-14}v_{2,-13}v_{3,-12}v_{3,-10}v_{3,-8})(1+v_{2,-5} \\
& + v_{1,-4} v_{2,-5}+v_{3,-4} v_{2,-5} + v_{1,-4} v_{3,-4} v_{2,-5} + v_{1,-4} v_{3,-4} v_{2,-3} v_{2,-5}) \\ 
& + (1+v_{1,-14} + v_{1,-14}v_{2,-13})((v_{3,-8}+v_{3,-10}v_{3,-8})(v_{2,-5} + v_{1,-4} v_{2,-5} \\
& + v_{3,-4} v_{2,-5}+v_{1,-4} v_{3,-4} v_{2,-5} + v_{1,-4} v_{3,-4} v_{2,-3} v_{2,-5}) \\
& +(v_{2,-5}v_{2,-7}v_{3,-8} +v_{2,-5}v_{2,-7}v_{3,-8}v_{3,-10}+ v_{2,-5}v_{2,-7}v_{2,-9}v_{3,-8}v_{3,-10})(v_{1,-4} \\
& + v_{3,-4} +v_{1,-4}v_{3,-4} +v_{2,-3}v_{1,-4}v_{3,-4})\\
& + (v_{3,-4}+v_{3,-4}v_{2,-3})(v_{1,-4}v_{1,-6} v_{2,-5}v_{2,-7}v_{3,-8} \\
& + v_{1,-4}v_{1,-6} v_{2,-5}v_{2,-7}v_{3,-8}v_{3,-10} \\
& + v_{1,-4}v_{1,-6} v_{2,-5}v_{2,-7}v_{2,-9}v_{3,-8}v_{3,-10} \\
& + v_{1,-4}v_{1,-6}v_{1,-8} v_{2,-5}v_{2,-7}v_{2,-9}v_{3,-8}v_{3,-10})) \\
& + v_{1,-14}v_{2,-13}v_{3,-12}v_{3,-10}v_{3,-8}(v_{2,-5}v_{2,-7} + v_{2,-5}v_{2,-7}v_{2,-9})(v_{1,-4}+v_{3,-4} \\
& + v_{1,-4} v_{3,-4} + v_{1,-4} v_{3,-4} v_{2,-3}) \\
& +v_{1,-14}v_{2,-13}v_{3,-12}v_{3,-10}v_{3,-8} (v_{3,-4}+v_{3,-4} v_{2,-3})  (v_{1,-4}v_{1,-6} v_{2,-5}v_{2,-7} \\
& + v_{1,-4}v_{1,-6} v_{2,-5}v_{2,-7}v_{2,-9}+v_{1,-4}v_{1,-6}v_{1,-8} v_{2,-5}v_{2,-7}v_{2,-9})).
\end{align*}

Starting from the initial seed $({\bf z}, G^-)$, the following sequence of mutations produces (in the last step) the cluster variable corresponding to $L(m)$.
\begin{align*}
& (3,-3), (2,-2), (1,-3), (2,-4), (1,-5), (2,-6), (3,-7), \\
& (1,-7), (2,-8), (3,-9), (3,-11),(2,-12),(1,-13).    
\end{align*} 

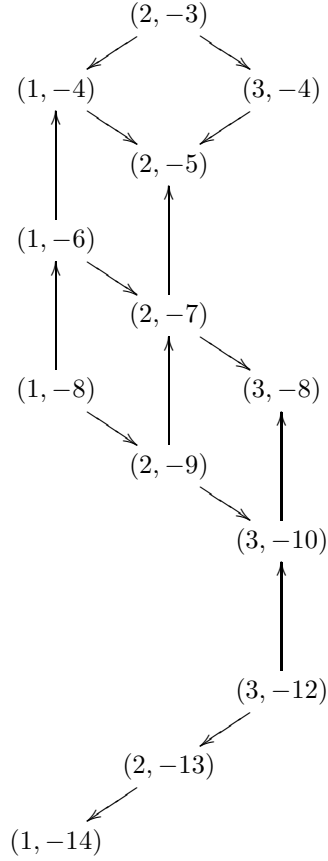
\begin{figure}
\resizebox{1.0\width}{1.0\height}{
\begin{minipage}[t]{.5\linewidth}
\centerline{
\begin{xy}
(10,100) *+{(1,-4)} ="0",
(10,80) *+{(1,-6)} ="1",
(10,60) *+{(1,-8)} ="2",
(10,0) *+{(1,-14)} ="5",
(25,110) *+{(2,-3)} ="8",
(25,90) *+{(2,-5)} ="9",
(25,70) *+{(2,-7)} ="10",
(25,50) *+{(2,-9)} ="11",
(25,10) *+{(2,-13)} ="13",
(40,100) *+{(3,-4)} ="16",
(40,60) *+{(3,-8)} ="18",
(40,40) *+{(3,-10)} ="19",
(40,20) *+{(3,-12)} ="20",
"1", {\ar"0"},
"8", {\ar"0"},
"0", {\ar"9"},
"2", {\ar"1"},
"1", {\ar"10"},
"2", {\ar"11"},
"13", {\ar"5"},
"8", {\ar"16"},
"10", {\ar"9"},
"16", {\ar"9"},
"11", {\ar"10"},
"10", {\ar"18"},
"11", {\ar"19"},
"20", {\ar"13"},
"19", {\ar"18"},
"20", {\ar"19"},
\end{xy}}
\end{minipage}}
\caption{The support of the $A$-module $K(Y_{1,-15} Y_{3,-11} Y_{3,-9} Y_{2,-6})$ in type $\mathbb{A}_3$.}\label{example1}
\end{figure}
\end{example}

\begin{example}\label{exampleA32}
In type $\mathbb{A}_3$, let $N=2$, $k_1=1,k_2=1$, $i_1=2,i_2=2$, $r=-10$, $j_1=1$, and $n_1=4$. Then $m=Y_{2,-10} Y_{2,-6}$. We get

\begin{align*}
g_{i,s}(m)= \begin{cases}
1  & \text{if } (i,s)=(2,-10), \text{ or } (2,-6); \\
-1 & \text{if } (i,s)=(2,-8), \text{ or } (2,-4); \\
0  & \text{otherwise}.
\end{cases}
\end{align*}
Thus by Definition \ref{definition of generic kernel}
\[
I(m)^+= I_{2,-11}\oplus I_{2,-7}, \quad I(m)^-= I_{2,-9}\oplus I_{2,-5}.
\]

The module $K(m)$ has dimension 8 and is displayed in Figure \ref{example2}. In Figure \ref{example2}, all vertices carry a vector space of dimension $1$. Applying Theorem \ref{theorem of geometric formula for sm}, we can compute its $q$-character as follows. There are 35 submodules in $K(m)$.

\begin{align*}
\chi^-_q(L(m)) & = m (1+ v_{2,-9}+v_{2,-9}v_{1,-8}+v_{2,-9}v_{3,-8}+v_{2,-9}v_{1,-8}v_{3,-8} \\
& +v_{2,-9}v_{1,-8}v_{3,-8}v_{2,-7}v_{2,-5} + (v_{2,-5} + v_{1,-4} v_{2,-5}+ v_{3,-4} v_{2,-5}  \\
& + v_{1,-4} v_{3,-4} v_{2,-5} + v_{1,-4} v_{3,-4} v_{2,-3} v_{2,-5})(1+ v_{2,-9} \\
& + v_{2,-9}v_{1,-8}+v_{2,-9}v_{3,-8}+v_{2,-9}v_{1,-8}v_{3,-8}) \\
& + v_{2,-9}v_{1,-8}v_{3,-8}v_{2,-7}v_{2,-5} (v_{1,-4}+ v_{3,-4} + v_{1,-4} v_{3,-4} + v_{1,-4} v_{3,-4} v_{2,-3})).
\end{align*}

Starting from the initial seed $({\bf z}, G^-)$, the following sequence of mutations produces (in the last step) the cluster variable corresponding to $L(m)$.
\begin{align*}
& (3,-3), (2,-2), (1,-3), (2,-4), (3,-7), (2,-6), (1,-7), (2,-8).    
\end{align*} 

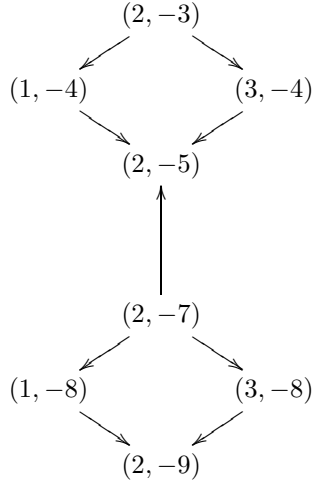
\begin{figure}
\resizebox{1.0\width}{1.0\height}{
\begin{minipage}[t]{.3\linewidth}
\centerline{
\begin{xy}
(10,100) *+{(1,-4)} ="0",
(10,60) *+{(1,-8)} ="2",
(25,110) *+{(2,-3)} ="8",
(25,90) *+{(2,-5)} ="9",
(25,70) *+{(2,-7)} ="10",
(25,50) *+{(2,-9)} ="11",
(40,100) *+{(3,-4)} ="16",
(40,60) *+{(3,-8)} ="18",
"8", {\ar"0"},
"0", {\ar"9"},
"2", {\ar"11"},
"8", {\ar"16"},
"18", {\ar"11"},
"16", {\ar"9"},
"10", {\ar"9"},
"10", {\ar"2"},
"10", {\ar"18"},
\end{xy}}
\end{minipage}}
\caption{The support of the $A$-module $K(Y_{2,-10} Y_{2,-6})$ in type $\mathbb{A}_3$.}\label{example2}
\end{figure}
\end{example}

\begin{example}\label{exampleB21}
In type $\mathbb{B}_2$, let $N=2$, $k_1=1,k_2=1$, $i_1=2,i_2=2$, $r=-12$, $j_1=1$, and $n_1=6$. Then $m=Y_{2,-12} Y_{2,-6}$. We get

\begin{align*}
g_{i,s}(m)= \begin{cases}
1  & \text{if } (i,s)=(2,-12), \text{ or } (2,-6); \\
-1 & \text{if } (i,s)=(2,-10), \text{ or } (2,-4); \\
0  & \text{otherwise}.
\end{cases}
\end{align*}
Thus by Definition \ref{definition of generic kernel}
\[
I(m)^+= I_{2,-13}  \oplus I_{2,-7}, \quad I(m)^-= I_{2,-11}  \oplus I_{2,-5}.
\]

The module $K(m)$ has dimension 6 and is displayed in Figure \ref{example3}. In Figure \ref{example3}, all vertices carry a vector space of dimension $1$. Applying Theorem \ref{theorem of geometric formula for sm}, we can compute its $q$-character as follows. There are 15 submodules in $K(m)$.

\begin{align*}
\chi^-_q(L(m)) = & m (1+ v_{2,-11} + v_{2,-11}v_{1,-9} + v_{2,-11}v_{1,-9}v_{2,-7}v_{2,-5} \\
& + (1+ v_{2,-11} + v_{2,-11}v_{1,-9}) (v_{2,-5} + v_{2,-5} v_{1,-3} + v_{2,-5} v_{1,-3} v_{2,-1}) \\
& + v_{2,-11}v_{1,-9}v_{2,-7}v_{2,-5} v_{1,-3} + v_{2,-11}v_{1,-9}v_{2,-7}v_{2,-5} v_{1,-3}v_{2,-1}).
\end{align*}

Starting from the initial seed $({\bf z}, G^-)$, the following sequence of mutations produces (in the last step) the cluster variable corresponding to $L(m)$.
\begin{align*}
(2,0), (1,-1), (2,-4), (2,-6), (1,-7), (2,-10).
\end{align*} 

\begin{figure}[H]
\resizebox{1.0\width}{1.0\height}{
\begin{minipage}[t]{.3\linewidth}
\begin{xy}
(15,60)*+{(2,-1)}="1";%
(30,50)*+{(1,-3)}="3";%
(15,40)*+{(2,-5)}="5";%
(15,30)*+{(2,-7)}="6"; 
(0,20)*+{(1,-9)}="8"; 
(15,10)*+{(2,-11)}="10"; 
{\ar "6"; "5"};
{\ar "6";"8"};
{\ar "8";"10"};
{\ar "1";"3"};
{\ar "3";"5"};
\end{xy}
\end{minipage}}
\caption{The support of the $A$-module $K(Y_{2,-12} Y_{2,-6})$ in type $\mathbb{B}_2$.}\label{example3}
\end{figure}
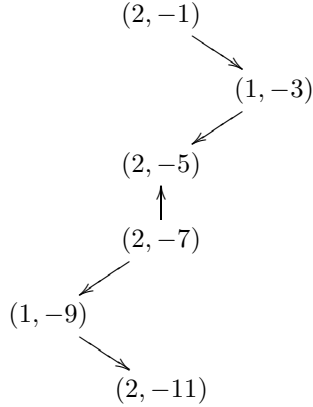
\end{example}


\subsection{Combinatorial character formula}
Recall from Section 2.3 that for $(i,k)\in G^-_0$, we denote by $p^+_{i,k}$ (respectively, $p^-_{i,k}$) the unique highest (respectively, lowest) path in $\mathscr{P}_{i,k}$. For an arbitrary path $p\in \mathscr{P}_{i,k}$, we let $p \ominus p^{+}_{i,k}$ denote the symmetric difference between $p$ and $p^{+}_{i,k}$, defined as $p \ominus p^{+}_{i,k}=(p \cup p^{+}_{i,k}) \backslash (p \cap p^{+}_{i,k})$. Then the set $p \ominus p^{+}_{i,k}$ encloses the union of some consecutive cells. 

\begin{remark}
In type $\mathbb{A}_n$, the mapping $p \mapsto p \ominus p^{+}_{i,k}$ defines a bijection between $\mathscr{P}_{i,k}$ and the set of all Young diagrams inside an $i\times (n+1-i)$ rectangle.
\end{remark}

We define the height monomial $h(p)$ of a path $p\in \mathscr{P}_{i,k}$ by
\[
h(p) = \prod_{(i,r)\in p \ominus p^{+}_{i,k}} v_{i,r},
\]
where $(i,r)\in \Gamma^-_0$ runs over all the cell coordinates in $p \ominus p^{+}_{i,k}$ and we use the convention: $v_{i,r}=0$ if $(i,r)\not\in \Gamma^-_0$. In particular, $h(p^{+}_{i,k})=1$.   

Recall that for any snake $(i_{t},k_{t})$, $1\leq t\leq T\in \mathbb{Z}_{\geq1}$,
\begin{align*} 
\overline{\mathscr{P}}_{(i_{t},k_{t})_{1\leq t\leq T}}=\{(p_{1},\ldots,p_{T}): p_{t}\in \mathscr{P}_{i_{t},k_{t}}, \ 1\leq t\leq T, \ (p_{1},\ldots,p_{T})\text { is } \text {non-overlapping} \}.
\end{align*}

\begin{theorem}\label{F-polynomial formula}
Let $L(m)=L(\prod_{i=1}^{T}Y_{i_t,k_t})$ be a prime snake module and $K(m)$ be the generic kernel associated to $L(m)$. Then 
\begin{align*}
F_{K(m)} = \sum_{(p_1,\ldots,p_T)\in \overline{\mathscr{P}}_{(i_t,k_t)_{1\leq t\leq T}}} \prod_{t=1}^T h(p_t).
\end{align*}
\end{theorem}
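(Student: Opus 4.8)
The plan is to deduce the formula from two results already in hand: Theorem~\ref{theorem of geometric formula for sm}, which gives $\chi^-_q(L(m))=m\,F_{K(m)}$ once the indeterminates $v_{i,r}$ are specialized to $A^{-1}_{i,r}$, and the Mukhin--Young path description of the \emph{untruncated} $q$-character (Theorem~\ref{path description of q-characters}). Since the elements $A_{i,r}$, $(i,r)\in\Gamma_0$, are multiplicatively independent in $\mathcal{P}$, the specialization $v_{i,r}\mapsto A^{-1}_{i,r}$ is injective on $\mathbb{Z}[v_{i,r}\mid(i,r)\in\Gamma^-_0]$; hence it suffices to prove the asserted identity after this specialization, i.e.\ to show
\[
\chi^-_q(L(m))\;=\;m\sum_{(p_1,\dots,p_T)\in\overline{\mathscr{P}}_{(i_t,k_t)_{1\leq t\leq T}}}\;\prod_{t=1}^{T}h(p_t)\ \Big|_{\,v_{i,r}\mapsto A^{-1}_{i,r}} .
\]

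First I would prove a \emph{move lemma}: for every $(i,k)\in\mathcal{X}$ and every $p\in\mathscr{P}_{i,k}$,
\[
\mathfrak{m}(p)\;=\;\mathfrak{m}(p^+_{i,k})\prod_{(j,s)\in p\ominus p^+_{i,k}}A^{-1}_{j,s},
\]
where the product runs over the cell coordinates of $p\ominus p^+_{i,k}$. This is the quantitative form of Lemma~5.8 of~\cite{MY12a} (and of the corresponding statement in~\cite{MY12b}): an elementary lowering move carries a path across a single cell $c$ and multiplies $\mathfrak{m}$ by $A^{-1}$ at the coordinate of $c$, and $p$ is obtained from $p^+_{i,k}$ by lowering across exactly the cells enclosed by $p\ominus p^+_{i,k}$, in any admissible order, so the identity follows by telescoping. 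As $p^+_{i_t,k_t}$ realizes the highest weight monomial of $L(Y_{i_t,k_t})$ we have $\mathfrak{m}(p^+_{i_t,k_t})=Y_{i_t,k_t}$ and hence $\prod_{t}\mathfrak{m}(p^+_{i_t,k_t})=m$; substituting the move lemma into Theorem~\ref{path description of q-characters} gives
\[
\chi_q(L(m))\;=\;m\sum_{(p_1,\dots,p_T)\in\overline{\mathscr{P}}_{(i_t,k_t)_{1\leq t\leq T}}}\;\prod_{t=1}^{T}\ \prod_{(j,s)\in p_t\ominus p^+_{i_t,k_t}}A^{-1}_{j,s}.
\]

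Next I would describe the effect of truncation. The key point is that, for a non-overlapping tuple $(p_1,\dots,p_T)$, the monomial $m\prod_t\prod_{(j,s)\in p_t\ominus p^+_{i_t,k_t}}A^{-1}_{j,s}$ involves only variables from ${\bf Y}^-$ exactly when every cell coordinate that occurs lies in $\Gamma^-_0$. The ``if'' direction is Lemma~4.15 of~\cite{HL16}: $A^{-1}_{j,s}=\widehat{y}_{j,s+d_j}$ is a monomial in $\{Y_{i,r}\mid(i,r)\in G^-_0\}$ whenever $(j,s)\in\Gamma^-_0$. For ``only if'', if the monomial lies in ${\bf Y}^-$ then it is a monomial of $\chi^-_q(L(m))$, which by~(\ref{m-Pm}) equals $m\,P_m$ with $P_m$ a polynomial in $\{A^{-1}_{i,r-d_i}\mid(i,r)\in G^-_0\}$; writing the monomial in the two ways as $m$ times a product of $A^{-1}$'s and invoking multiplicative independence of the $A_{i,r}$ forces every cell coordinate into $\Gamma^-_0$. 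Therefore $\chi^-_q(L(m))$ is precisely the partial sum of $\chi_q(L(m))$ over those tuples all of whose cell coordinates lie in $\Gamma^-_0$; and since $h(p_t)$ is defined with the convention $v_{j,s}=0$ for $(j,s)\notin\Gamma^-_0$, this partial sum is exactly $m\sum_{(p_t)}\prod_t h(p_t)\big|_{v\mapsto A^{-1}}$. Comparing with Theorem~\ref{theorem of geometric formula for sm} and using the injectivity of $v\mapsto A^{-1}$ once more yields $F_{K(m)}=\sum_{(p_1,\dots,p_T)}\prod_t h(p_t)$, completing the proof.

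I expect the move lemma to be the main obstacle: converting the purely qualitative fact that any two paths are joined by moves into the precise statement that $\mathfrak{m}(p)/\mathfrak{m}(p^+_{i,k})$ is the product of the $A^{-1}$'s at the cell coordinates of $p\ominus p^+_{i,k}$. In type $\mathbb{A}_n$ the region $p\ominus p^+_{i,k}$ is a Young diagram and this is transparent; in type $\mathbb{B}_n$ one has to deal with the right-triangular cells near the short node and with the $\varepsilon$-shifts carried by the paths, and to check that these are matched by the $c_{ji}=-2$ term in~(\ref{root analogue}). By contrast, the truncation step is a routine consequence of~(\ref{m-Pm}), Lemma~4.15 of~\cite{HL16}, and multiplicative independence of the $A_{i,r}$, and the final descent from the specialized identity to the polynomial identity is immediate from that same independence.
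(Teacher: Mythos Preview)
Your proposal is correct and follows essentially the same route as the paper: both combine Theorem~\ref{theorem of geometric formula for sm} with the Mukhin--Young path formula, use the move lemma (the paper invokes Lemma~5.10 of \cite{MY12a} rather than Lemma~5.8, but the content is the same) to write $\mathfrak{m}(p_t)=\mathfrak{m}(p^+_{i_t,k_t})\prod A^{-1}_{j_r,\ell_r}$, identify $\prod_t\mathfrak{m}(p^+_{i_t,k_t})=m$, and then handle truncation by restricting to tuples whose cell coordinates lie in $\Gamma^-_0$. Your treatment is slightly more explicit than the paper's in two places---you justify the truncation step via Lemma~4.15 of \cite{HL16} and multiplicative independence of the $A_{i,r}$, and you note that injectivity of $v_{i,r}\mapsto A^{-1}_{i,r}$ is needed to pass from the specialized identity back to the polynomial identity---points the paper leaves implicit since it already sets $v_{i,r}:=A^{-1}_{i,r}$ in~(\ref{root analogue}).
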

\begin{proof}
By Theorem \ref{theorem of geometric formula for sm}, we have 
\begin{align}\label{proof of F-polynomial formula-1}
F_{K(m)}=\frac{\chi^-_q(L(m))}{m},
\end{align}
and Theorem \ref{path description of q-characters} gives a formula for $\chi_q(L(m))$ in terms of paths
\begin{align}\label{proof of F-polynomial formula-2}
\chi_q(L(m))= \sum_{(p_{1},\ldots,p_{T}) \in \overline{\mathscr{P}}_{(i_{\ell},k_{\ell})_{1 \leq \ell \leq T}}} \prod_{\ell=1}^{T}\mathfrak{m}(p_{\ell}).
\end{align}

Note that equation (\ref{proof of F-polynomial formula-1}) uses the truncated $q$-character $\chi^-_q(L(m))$ whereas equation (\ref{proof of F-polynomial formula-2}) uses the complete $q$-character $\chi_q(L(m))$. First we prove the statement in the case where $\chi^-_q(L(m))=\chi_q(L(m))$.

Applying Lemma 5.10 of \cite{MY12a} and using induction, we have 
\begin{align}\label{proof of F-polynomial formula-3}
\prod_{t=1}^T \mathfrak{m}(p_t) = \prod_{t=1}^T \mathfrak{m}(p^+_{i_t,k_t}) \prod_{r=1}^R A^{-1}_{j_r,\ell_r}. 
\end{align}
where $(j_r,\ell_r)$, $1\leq r\leq R\in \mathbb{Z}_{\geq 0}$, is a sequence of cell coordinates determined by the symmetric difference $p^+_{i_t,k_t}\ominus p_t$, $1\leq t\leq T$. Therefore 
\begin{align}\label{proof of F-polynomial formula-4}
\prod_{t=1}^T h(p_t) = \prod_{r=1}^R A^{-1}_{j_r,\ell_r}. 
\end{align}
Moreover, since $p^+_{i_t,k_t}$ is the highest path in $\mathscr{P}_{i_t,k_t}$, Theorem  \ref{path description of q-characters} implies that $\prod_{t=1}^T \mathfrak{m}(p^+_{i_t,k_t})$ contains no negative powers. Since $L(m)$ is special, its highest weight monomial
$m$ is the unique dominant monomial in $\chi_q(L(m))$, and thus
\begin{align}\label{proof of F-polynomial formula-5}
m = \prod_{t=1}^T \mathfrak{m}(p^+_{i_t,k_t}). 
\end{align}
Thus equations (\ref{proof of F-polynomial formula-1})--(\ref{proof of F-polynomial formula-5}) imply 
\begin{align*}
mF_{K(m)} = \chi_q(L(m)) = \sum_{(p_{1},\ldots,p_{T}) \in \overline{\mathscr{P}}_{(i_{\ell},k_{\ell})_{1 \leq \ell \leq T}}} m \prod_{r=1}^R A^{-1}_{j_r,\ell_r}=\sum_{(p_1,\ldots,p_T)\in \overline{\mathscr{P}}_{(i_t,k_t)_{1\leq t\leq T}}} m \prod_{t=1}^T h(p_t).
\end{align*}

Now suppose $\chi^-_q(L(m))\neq\chi_q(L(m))$. Then we have to modify the above argument as follows. Equation (\ref{proof of F-polynomial formula-2}) is replaced by 
\begin{align}\label{proof of F-polynomial formula-6}
\chi^-_q(L(m))= \sum_{\substack{(p_{1},\ldots,p_{T}) \in \overline{\mathscr{P}}_{(i_{\ell},k_{\ell})_{1 \leq \ell \leq T}}\\ C^+_{p_\ell}, C^-_{p_\ell}\subset G^-_0}} \prod_{\ell=1}^{T}\mathfrak{m}(p_{\ell}).
\end{align}
In other words, we require that for each path $p_\ell$ the upper and lower corners $C^+_{p_\ell}, C^-_{p_\ell}$ lie in $G^-_0$. Moreover, in equation (\ref{proof of F-polynomial formula-4}), we replace $A^{-1}_{j_r,\ell_r}$ by ${A'}^{-1}_{j_r,\ell_r}$ where
\begin{align*}
{A'}^{-1}_{j_r,\ell_r} = \begin{cases}
A^{-1}_{j_r,\ell_r} & \text{ if $(j_r,\ell_r)\in \Gamma^-_0$}, \\
0 & \text{otherwise}.
\end{cases}
\end{align*}
Then 
\begin{align*}
mF_{K(m)} = \chi^-_q(L(m)) & = \sum_{\substack{(p_{1},\ldots,p_{T}) \in \overline{\mathscr{P}}_{(i_{\ell},k_{\ell})_{1 \leq \ell \leq T}}\\ C^+_{p_\ell}, C^-_{p_\ell}\subset G^-_0}} m \prod_{r=1}^{R'} A^{-1}_{j_r,\ell_r} \\
& =\sum_{(p_1,\ldots,p_T) \in \overline{\mathscr{P}}_{(i_t,k_t)_{1\leq t\leq T}}} m \prod_{r=1}^R {A'}^{-1}_{j_r,\ell_r} \\
& = \sum_{(p_1,\ldots,p_T)\in \overline{\mathscr{P}}_{(i_t,k_t)_{1\leq t\leq T}}} m \prod_{t=1}^T h(p_t).
\end{align*}
\end{proof}

\begin{remark}\label{compute dimensional vector}
\begin{itemize}
	\item[(1)] Theorem \ref{F-polynomial formula} allows us to calculate the dimension vector $(d_{i,r}(K(m)))_{(i,r)\in \Gamma^-_0}$ of the $A$-module $K(m)$ in a combinatorial way using all $T$-tuples of non-overlapping paths. We will explain this in the next section.
	\item[(2)] Theorem \ref{F-polynomial formula} provides a combinatorial approach to find all submodules of $K(m)$, see Examples \ref{example111}-\ref{example333}.
	\item[(3)] Using Proposition 3.2 of \cite{DWZ10}, for any two finite-dimensional $A$-modules $M$ and $N$, we have 
\[
F_{M\oplus N} = F_M F_N.
\]
Replacing the $A$-module $K(m)$ in Theorem \ref{F-polynomial formula} by a direct sum of such modules, we obtain a similar combinatorial formula for arbitrary snake modules.
\end{itemize}
\end{remark}

\begin{corollary}\label{corollary of Grassmannian}
If $L(m)=L(\prod_{i=1}^{T}Y_{i_t,k_t})$ is a snake module and $K(m)$ is the associated generic kernel, then for all dimension verctors $\underline{e}$ we have
\[
\chi(\text{Gr}_{\underline{e}}(K(m)))=0 \text{ or } 1.
\]
\end{corollary}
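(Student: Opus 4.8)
The plan is to read off $\chi(\mathrm{Gr}_{\underline e}(K(m)))$ directly as a coefficient of $F_{K(m)}$ and then to show that every such coefficient equals $0$ or $1$. By the defining formula $F_M=\sum_{e}\chi(\mathrm{Gr}_e(M))\prod_{(i,r)\in\Gamma^-_0}v_{i,r}^{e_{i,r}}$, the Euler characteristic $\chi(\mathrm{Gr}_{\underline e}(K(m)))$ is precisely the coefficient of the monomial $\prod_{(i,r)}v_{i,r}^{\underline e_{i,r}}$ in $F_{K(m)}$, so it suffices to prove that $F_{K(m)}$, viewed as a polynomial in the $v_{i,r}$, has all coefficients in $\{0,1\}$.

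First I would invoke Theorem \ref{path description of q-characters}: for any snake $(i_t,k_t)_{1\le t\le T}$ the module $L(m)$ is thin, so every Laurent monomial in the variables $Y_{i,r}$ occurs with coefficient $0$ or $1$ in $\chi_q(L(m))$, and hence also in its truncation $\chi^-_q(L(m))$. Next I would apply the combinatorial character formula: by Theorem \ref{F-polynomial formula} (and, when $L(m)$ is not prime, its direct-sum extension in Remark \ref{compute dimensional vector}(3)) one has $F_{K(m)}=\sum_{(p_1,\dots,p_T)\in\overline{\mathscr P}}\prod_{t=1}^T h(p_t)$, so the coefficient of a fixed monomial $\prod_{(i,r)}v_{i,r}^{e_{i,r}}$ in $F_{K(m)}$ equals the number of non-overlapping $T$-tuples $(p_1,\dots,p_T)$ whose product $\prod_t h(p_t)$ equals that monomial and all of whose factors $h(p_t)$ are nonzero.

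The key step is then an injectivity argument. Suppose two distinct non-overlapping $T$-tuples gave the same monomial $\prod_t h(p_t)$ in the $v_{i,r}$. Evaluating $v_{i,r}\mapsto A^{-1}_{i,r}$ and using the identity $\prod_t h(p_t)=m^{-1}\prod_{\ell=1}^T\mathfrak m(p_\ell)$ that is established in the proof of Theorem \ref{F-polynomial formula}, the two tuples would then produce the same Laurent monomial $\prod_\ell\mathfrak m(p_\ell)$ in the $Y$-variables; but since $\chi_q(L(m))=\sum_{(p_1,\dots,p_T)\in\overline{\mathscr P}}\prod_\ell\mathfrak m(p_\ell)$ by Theorem \ref{path description of q-characters}, this monomial would occur in $\chi_q(L(m))$ with coefficient at least $2$, contradicting thinness. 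Hence the assignment $(p_1,\dots,p_T)\mapsto\prod_t h(p_t)$ is injective on the set of contributing tuples, so every coefficient of $F_{K(m)}$ lies in $\{0,1\}$, which is the assertion.

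The hard part is not conceptual but rather the bookkeeping around truncation and non-primeness. When $\chi^-_q(L(m))\ne\chi_q(L(m))$ one must restrict all of the above sums to tuples whose corner sets $C^+_{p_\ell}$ and $C^-_{p_\ell}$ lie in $G^-_0$, exactly as in the proof of Theorem \ref{F-polynomial formula}, and replace the factors $A^{-1}_{j_r,\ell_r}$ by their truncated versions; and when $L(m)$ is a non-prime snake, one takes $K(m)$ to be the corresponding direct sum of generic kernels of prime snake factors, using Remark \ref{snake modules geometric character formula remark} and Remark \ref{compute dimensional vector}(3). In all cases the essential mechanism is unchanged: distinct contributing tuples yield distinct monomials because $L(m)$ is thin.
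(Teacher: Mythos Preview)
Your proof is correct and reaches the same reduction as the paper: both arguments observe that the coefficient of a given $v$-monomial in $F_{K(m)}$ is the number of contributing non-overlapping $T$-tuples, so it suffices to show that distinct tuples yield distinct $v$-monomials. Where the arguments diverge is in how this injectivity is established. The paper argues it directly and combinatorially: each path $p_t$ is determined by its symmetric difference $p_t\ominus p^+_{i_t,k_t}$, and since the paths $p_1,\dots,p_T$ are pairwise non-overlapping, the multiset of enclosed cells recorded by $\prod_t h(p_t)$ determines the tuple. You instead pass through the $Y$-variables: if two distinct tuples gave the same $v$-monomial, then after evaluating $v_{i,r}\mapsto A^{-1}_{i,r}$ and multiplying by $m$ they would give the same $Y$-monomial in $\chi_q(L(m))$, contradicting the thinness of $L(m)$ from Theorem~\ref{path description of q-characters}. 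Your route is slick because it packages the entire injectivity step into the thinness statement, which the paper has already imported from Mukhin--Young; the paper's route is more self-contained but relies on the reader unpacking why the multiset of cells determines the individual regions. Your handling of the truncated case and of non-prime snakes via Remarks~\ref{snake modules geometric character formula remark} and~\ref{compute dimensional vector}(3) is also correct and matches what the paper would need to say explicitly.
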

\begin{proof}
Using Theorem \ref{F-polynomial formula} and the definition of the $F$-polynomial, it suffices to show that for any two $T$-tuples $(p_1,\ldots,p_T)\neq (p'_1,\ldots,p'_T) \in \overline{\mathscr{P}}_{(i_t,k_t)_{1\leq t\leq T}}$ of non-overlapping paths, we have $\prod_{t=1}^T h(p_t) \neq \prod_{t=1}^T h(p'_t)$. This holds because $(p_1,\ldots,p_T)$ are disjoint paths and each $p_i$ is determined by $p^+_i\ominus p_i$.
\end{proof}

\begin{remark}
Corollary \ref{corollary of Grassmannian} holds for any thin and real module if the Conjecture 13.2 of \cite{HL10} or Conjecture 5.2 of \cite{HL13} or Conjecture 9.1 of \cite{Le10} holds. 
\end{remark}

\subsection{Generic kernel}\label{generic kernel}

Recall that $\mathscr{P}_{(i,k)}$ is a collection of paths defined in Section \ref{MY paths}. Let 
\[
\mathscr{P}'_{(i,k)}=\{p \in \mathscr{P}_{(i,k)}\mid C^\pm_{p}\subset G^-_0\} \subset \mathscr{P}_{(i,k)}.
\]

Let $\mathscr{P}_{(i_t,k_t)_{1\leq t \leq T}}$ be a collection of paths associated to a snake module $L(m)$ of the form (\ref{the dominant monomial of snake modules}) in $\mathscr{C}^-$. For any snake $(i_{t},k_{t})$, $1\leq t\leq T\in \mathbb{Z}_{\geq1}$, let
\begin{align*} 
\overline{\mathscr{P}'}_{(i_{t},k_{t})_{1\leq t\leq T}}=\{(p'_{1},\ldots,p'_{T}): p'_{t}\in \mathscr{P}'_{i_{t},k_{t}}, \ 1\leq t\leq T, \ (p'_{1},\ldots,p'_{T})\text { is } \text {non-overlapping} \}.
\end{align*}

Let $V_{m}$ be the set of all the cell coordinates in the set $\bigcup_{1\leq t\leq T} ({p'}^-_{i_t,k_t} \ominus p^{+}_{i_t,k_t})$, where ${p'}^-_{i_t,k_t}$ is a minimal path in $\mathscr{P}'_{(i_{t},k_{t})}$ for each $1\leq t\leq T$ and $({p'}^-_{i_1,k_1},\ldots,{p'}^-_{i_T,k_T})\in \overline{\mathscr{P}'}_{(i_{t},k_{t})_{1\leq t\leq T}}$.

Note that when $\chi^-_q(L(\prod_{i=1}^{T}Y_{i_t,k_t}))=\chi_q(L(\prod_{i=1}^{T}Y_{i_t,k_t}))$, the set $V_{m}$ is the set of all the cell coordinates in the set $\bigcup_{1\leq t\leq T} (p^-_{i_t,k_t}\ominus p^{+}_{i_t,k_t})$.

\begin{definition}
Let $Q(m)$ be the full subquiver of $\Gamma^-$ with vertex set $V_{m}$.
\end{definition}

If we assign a vector space whose dimension is equal to the multiplicity of cells with coordinate $(i,r)$ occurring in the multiset $\bigcup_{1\leq t\leq T} ({p'}^-_{i_t,k_t} \ominus p^{+}_{i_t,k_t})$ to every point $(i,r)\in V_m$, then by Theorem~\ref{F-polynomial formula}, the generic kernel $K(m)$ is a representation of $Q(m)$. In general $K(m)$ is not unique, not even up to isomorphism, but its $F$-polynomial is unique. In particular, the linear maps associated with arrows satisfy relations in the Jacobian ideal $J$.

The following several examples hold that $\chi^-_q(L(\prod_{i=1}^{T}Y_{i_t,k_t}))=\chi_q(L(\prod_{i=1}^{T}Y_{i_t,k_t}))$.

\begin{example}\label{example111}
In type $\mathbb{A}_3$, let $m=Y_{1,-15} Y_{3,-11} Y_{3,-9} Y_{2,-6}$. Then $K(m)$ is displayed in Figure~\ref{example11} (Here $K(m)$ is drawn opposite as Figure~\ref{example1}, because of the definition of paths). For each vertex $(i,r)\in V_m$, we find it convenient to always label the dimension of the vector space at the vertex $(i,r)$. The dimension associated with a vertex $(i,r)\in V_m$ is the multiplicity of cells with coordinate $(i,r)$ occurring in the multiset 
\[
(p^-_{1,-15}\ominus p^{+}_{1,-15})\cup (p^-_{3,-11}\ominus p^{+}_{3,-11})\cup (p^-_{3,-9}\ominus p^{+}_{3,-9}) \cup (p^-_{2,-6}\ominus p^{+}_{2,-6}).
\] 
The maps associated with arrows are $(\pm 1)$, whose sign is deduced from the defining relations of the Jacobian algebra $A$.

In the sense of Theorem \ref{F-polynomial formula}, finding all possible submodules of $K(m)$ is equivalent to finding all 4-tuple sets of non-overlapping paths in $\mathscr{P}_{(1,-15)}\times \mathscr{P}_{(3,-11)}\times\mathscr{P}_{(3,-9)}\times\mathscr{P}_{(2,-6)}$. 

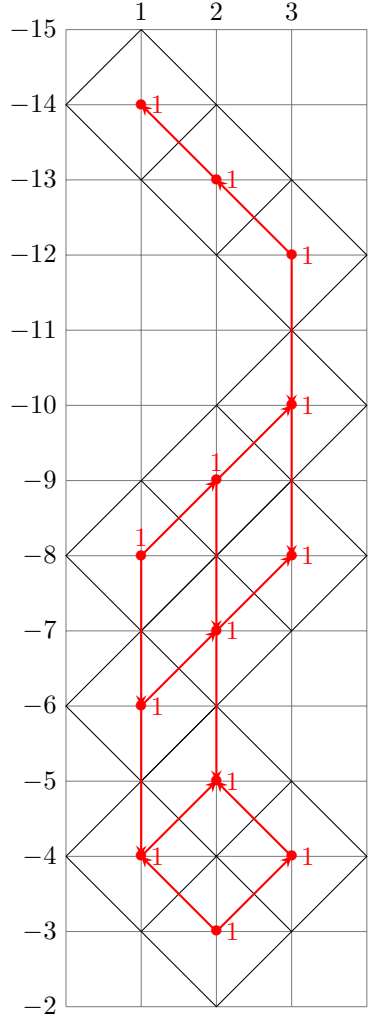
\begin{figure}
\resizebox{1.0\width}{1.0\height}{
\begin{minipage}[t]{.3\linewidth}
\begin{tikzpicture}
\draw[step=1cm,gray,thin] (0,1) grid (4,14);
\node[above] at (1,14) {$1$};
\node[above] at (2,14) {$2$};
\node[above] at (3,14) {$3$};
\node[left] at (0,1) {$-2$};
\node[left] at (0,2) {$-3$};
\node[left] at (0,3) {$-4$};
\node[left] at (0,4) {$-5$};
\node[left] at (0,5) {$-6$};
\node[left] at (0,6) {$-7$};
\node[left] at (0,7) {$-8$};
\node[left] at (0,8) {$-9$};
\node[left] at (0,9) {$-10$};
\node[left] at (0,10) {$-11$};
\node[left] at (0,11) {$-12$};
\node[left] at (0,12) {$-13$};
\node[left] at (0,13) {$-14$};
\node[left] at (0,14) {$-15$};
\node[red] at (1,13) {$\small \bullet$};
\node[red] at (2,12) {$\bullet$};
\node[red] at (3,11) {$\bullet$};
\node[red] at (3,9) {$\bullet$};
\node[red] at (2,8) {$\bullet$};
\node[red] at (1,7) {$\bullet$};
\node[red] at (3,7) {$\bullet$};
\node[red] at (2,6) {$\bullet$};
\node[red] at (1,5) {$\bullet$};
\node[red] at (2,2) {$\bullet$};
\node[red] at (2,4) {$\bullet$};
\node[red] at (1,3) {$\bullet$};
\node[red] at (3,3) {$\bullet$};
\node[red,right] at (1,13) {$1$};
\node[red,right] at (2,12) {$1$};
\node[red,right] at (3,11) {$1$};
\node[red,right] at (3,9) {$1$};
\node[red,above] at (2,8) {$1$};
\node[red,above] at (1,7) {$1$};
\node[red,right] at (3,7) {$1$};
\node[red,right] at (2,6) {$1$};
\node[red,right] at (1,5) {$1$};
\node[red,right] at (2,2) {$1$};
\node[red,right] at (2,4) {$1$};
\node[red,right] at (1,3) {$1$};
\node[red,right] at (3,3) {$1$};
\draw (1,14)--(2,13)--(3,12)--(4,11)--(3,10)--(2,11)--(1,12)--(0,13)--(1,14);
\draw (1,12)--(2,13) (2,11)--(3,12);
\draw (3,10)--(4,9)--(3,8)--(2,7)--(1,6)--(0,7)--(1,8)--(2,9)--(3,10);
\draw (2,7)--(1,8) (3,8)--(2,9);
\draw (3,8)--(4,7)--(3,6)--(2,5)--(1,4)--(0,5)--(1,6)--(2,7)--(3,8);
\draw (2,5)--(1,6) (3,6)--(2,7);
\draw (0,3)--(2,5)--(4,3)--(2,1)--(0,3);
\draw (1,2)--(3,4) (1,4)--(3,2);
\draw[->,thick,>=stealth,red] (2,2)--(1,3);
\draw[->,thick,>=stealth,red] (2,2) -- (3,3);
\draw[->,thick,>=stealth,red] (1,3) -- (2,4);
\draw[->,thick,>=stealth,red] (3,3) -- (2,4);
\draw[->,thick,>=stealth,red](2,12) -- (1,13);
\draw[->,thick,>=stealth,red] (3,11) -- (2,12);
\draw[->,thick,>=stealth,red] (3,11) -- (3,9);
\draw[->,thick,>=stealth,red] (2,8) -- (3,9);
\draw[->,thick,>=stealth,red] (1,7) -- (2,8);
\draw[->,thick,>=stealth,red] (1,5) -- (2,6);
\draw[->,thick,>=stealth,red] (2,6) -- (3,7);
\draw[->,thick,>=stealth,red] (1,7) -- (1,5);
\draw[->,thick,>=stealth,red] (1,5) -- (1,3);
\draw[->,thick,>=stealth,red] (2,8) -- (2,6);
\draw[->,thick,>=stealth,red] (2,6) -- (2,4);
\draw[->,thick,>=stealth,red] (3,9)--(3,7);
\end{tikzpicture}
\end{minipage}}
\caption{The module $K(Y_{1,-15} Y_{3,-11} Y_{3,-9} Y_{2,-6})$ in type $\mathbb{A}_3$. The index of $Y_{i,r}$ corresponds to the top vertex of the associated path rectangle $\mathscr{P}_{i,r}$. The dimension is $1$ at those vertices of $\Gamma^-$ that lie in the interior of these rectangles.}\label{example11}
\end{figure}
\end{example}

\begin{example}\label{example222}
In type $\mathbb{A}_3$, let $m=Y_{2,-10} Y_{2,-6}$. Then $K(m)$ is displayed in Figure \ref{example12} (Here $K(m)$ is drawn opposite as Figure \ref{example2}, because of the definition of paths). For each vertex $(i,r)\in V_m$, we label the dimension of the vector space at the vertex $(i,r)$. The dimension associated with a vertex $(i,r)\in V_m$ is the multiplicity of cells with coordinate $(i,r)$ occurring in the multiset 
\[
(p^-_{2,-10}\ominus p^{+}_{2,-10})\cup (p^-_{2,-6}\ominus p^{+}_{2,-6}). 
\]
The maps associated with arrows are $(\pm 1)$, whose sign is deduced from the defining relations of the Jacobian algebra $A$.

In the sense of Theorem \ref{F-polynomial formula}, finding all possible submodules of $K(m)$ is equivalent to finding all pairs of non-overlapping paths in $\mathscr{P}_{(2,-10)}\times\mathscr{P}_{(2,-6)}$.

\begin{figure}
\resizebox{1.0\width}{1.0\height}{
\begin{minipage}[t]{.3\linewidth}
\begin{tikzpicture}
\draw[step=1cm,gray,thin] (0,-1) grid (4,7);
\node[above] at (1,7) {$1$};
\node[above] at (2,7) {$2$};
\node[above] at (3,7) {$3$};
\node[left] at (0,-1) {$-2$};
\node[left] at (0,0) {$-3$};
\node[left] at (0,1) {$-4$};
\node[left] at (0,2) {$-5$};
\node[left] at (0,3) {$-6$};
\node[left] at (0,4) {$-7$};
\node[left] at (0,5) {$-8$};
\node[left] at (0,6) {$-9$};
\node[left] at (0,7) {$-10$};
\node[red] at (2,0) {$\bullet$};
\node[red] at (2,2) {$\bullet$};
\node[red] at (2,4) {$\bullet$};
\node[red] at (2,6) {$\bullet$};
\node[red] at (1,5) {$\bullet$};
\node[red] at (1,1) {$\bullet$};
\node[red] at (3,5) {$\bullet$};
\node[red] at (3,1) {$\bullet$};
\node[red,right] at (2,0) {$1$};
\node[red,right] at (2,2) {$1$};
\node[red,right] at (2,4) {$1$};
\node[red,right] at (2,6) {$1$};
\node[red,right] at (1,5) {$1$};
\node[red,right] at (1,1) {$1$};
\node[red,right] at (3,5) {$1$};
\node[red,right] at (3,1) {$1$};
\draw (0,5)--(2,7)--(4,5)--(2,3)--(0,5);
\draw (0,1)--(2,3)--(4,1)--(2,-1)--(0,1);
\draw (1,6)--(3,4) (1,4)--(3,6) (1,2)--(3,0) (1,0)--(3,2);
\draw[->,thick,>=stealth,red] (2,4) -- (1,5);
\draw[->,thick,>=stealth,red] (2,0) -- (1,1);
\draw[->,thick,>=stealth,red] (2,4) -- (3,5);
\draw[->,thick,>=stealth,red] (2,0) -- (3,1);
\draw[->,thick,>=stealth,red] (1,5) -- (2,6);
\draw[->,thick,>=stealth,red] (1,1) -- (2,2);
\draw[->,thick,>=stealth,red] (3,5) -- (2,6);
\draw[->,thick,>=stealth,red] (3,1) -- (2,2);
\draw[->,thick,>=stealth,red] (2,4) -- (2,2);
\end{tikzpicture}
\end{minipage}}
\caption{The module $K(Y_{2,-10}Y_{2,-6})$ in type $\mathbb{A}_3$. The index of $Y_{i,r}$ corresponds to the top vertex of the associated path rectangle $\mathscr{P}_{i,r}$. The dimension is $1$ at those vertices of $\Gamma^-$ that lie in the interior of these rectangles.}\label{example12}
\end{figure}
\end{example}

\begin{example}\label{example333}
In type $\mathbb{B}_2$, let $m=Y_{2,-12} Y_{2,-6}$. Then $K(m)$ is displayed in Figure \ref{example13} (Here $K(m)$ is drawn opposite as Figure \ref{example3}, because of the definition of paths). For each vertex $(i,r)\in V_m$, we label the dimension of the vector space at the vertex $(i,r)$. The dimension associated with a vertex $(i,r)\in V_m$ is the multiplicity of cells with coordinate $(i,r)$ occurring in the multiset 
\[
(p^-_{2,-12}\ominus p^{+}_{2,-12})\cup (p^-_{2,-6}\ominus p^{+}_{2,-6}).
\]
The maps associated with arrows are $(\pm 1)$, whose sign is deduced from the defining relations of the Jacobian algebra $A$.

In the sense of Theorem \ref{F-polynomial formula}, finding all possible submodules of $K(m)$ is equivalent to finding all pairs of non-overlapping paths in $\mathscr{P}_{(2,-12)}\times\mathscr{P}_{(2,-6)}$.

\begin{figure}
\resizebox{1.0\width}{1.0\height}{
\begin{minipage}[t]{.3\linewidth}
\begin{tikzpicture}
\draw[step=0.5cm,gray,thin] (0,-1) grid (3,5);
\node[above] at (1,5) {$1$};
\node[above] at (1.5,5) {$2$};
\node[above] at (2,5){$1$};
\node[left] at (0,-1) {$0$};
\node[left] at (0,0) {$-2$};
\node[left] at (0,1) {$-4$};
\node[left] at (0,2) {$-6$};
\node[left] at (0,3) {$-8$};
\node[left] at (0,4) {$-10$};
\node[left] at (0,5) {$-12$};
\draw (1.5,5.1)--(2,4.5)--(3,3.5)--(2,2.5)--(1.5,1.9);
\draw (2,4.5)--(1.5,3.9) (1.5,3.1)--(2,2.5);
\draw (1.5,2.1)--(1,1.5)--(0,0.5)--(1,-0.5)--(1.5,-1.1);
\draw (1,1.5)--(1.5,0.9)  (1,-0.5)--(1.5,0.1);
\node[red] at (1.5,4.5) {$\bullet$};
\node[red] at (2,3.5) {$\bullet$};
\node[red] at (1.5,2.5) {$\bullet$};
\node[red] at (1.5,4.5) {$\bullet$};
\node[red] at (2,3.5) {$\bullet$};
\node[red] at (1.5,1.5) {$\bullet$};
\node[red] at (1,0.5) {$\bullet$};
\node[red] at (1.5,-0.5) {$\bullet$};
\node[red,right] at (1.5,4.5) {$1$};
\node[red,right] at (2,3.5) {$1$};
\node[red,right] at (1.5,2.5) {$1$};
\node[red,left] at (1.5,1.5) {$1$};
\node[red,left] at (1,0.5) {$1$};
\node[red,left] at (1.5,-0.5) {$1$};
\draw[->,thick,>=stealth,red] (1.5,2.5)--(2,3.5);
\draw[->,thick,>=stealth,red] (2,3.5)--(1.5,4.5);
\draw[->,thick,>=stealth,red] (1.5,2.5)--(1.5,1.5);
\draw[->,thick,>=stealth,red] (1.5,-0.5)--(1,0.5);
\draw[->,thick,>=stealth,red] (1,0.5)--(1.5,1.5);
\end{tikzpicture}
\end{minipage}}
\caption{The module $K(Y_{2,-12}Y_{2,-6})$ in type $\mathbb{B}_2$. The index of $Y_{i,r}$ corresponds to the top vertex of the associated path triangle $\mathscr{P}_{i,r}$. The dimension is $1$ at those vertices of $\Gamma^-$ that lie in the interior of these triangles.}\label{example13}
\end{figure}
\end{example}

The following is an example where the dimensions of $K(m)$ are larger than $1$.
\begin{example}
In type $\mathbb{A}_3$, let $m=Y_{2,-8}Y_{2,-6}$. Then $L(m)$ is a Kirillov-Reshetikhin module and $K(m)$ is displayed in Figure \ref{example14}. For each vertex $(i,r)\in V_m$, we label the dimension of the vector space at the vertex $(i,r)$. The dimension associated with a vertex $(i,r)\in V_m$ is the multiplicity of cells with coordinate $(i,r)$ occurring in the multiset $(p^-_{2,-8}\ominus p^{+}_{2,-8})\cup (p^-_{2,-6}\ominus p^{+}_{2,-6})$. In Figure \ref{example14}, almost all vertices carry a vector space of dimension 1, except the vertex $(2,-5)$ which carries a vector space of dimension 2.

Starting from the initial seed $({\bf z}, G^-)$, the following sequence of mutations produces (in the last step) the cluster variable corresponding to $L(m)$.
\begin{align*}
(2,-2), (2,-4), (2,-6), (1,-3), (1,-5), (3,-3), (3,-5), (2,-2), (2,-4).    
\end{align*} 

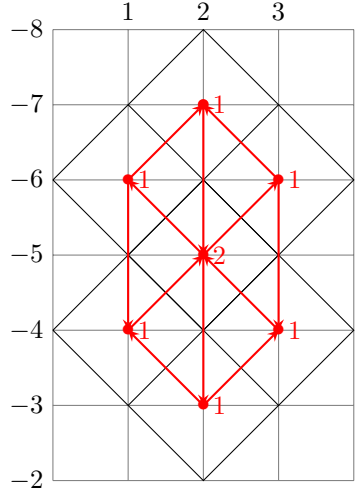
\begin{figure}
\resizebox{1.0\width}{1.0\height}{
\begin{minipage}[t]{.3\linewidth}
\begin{tikzpicture}
\draw[step=1cm,gray,thin] (0,-1) grid (4,5);
\node[above] at (1,5) {$1$};
\node[above] at (2,5) {$2$};
\node[above] at (3,5) {$3$};
\node[left] at (0,-1) {$-2$};
\node[left] at (0,0) {$-3$};
\node[left] at (0,1) {$-4$};
\node[left] at (0,2) {$-5$};
\node[left] at (0,3) {$-6$};
\node[left] at (0,4) {$-7$};
\node[left] at (0,5) {$-8$};
\node[red] at (2,0) {$\bullet$};
\node[red] at (2,2) {$\bullet$};
\node[red] at (2,4) {$\bullet$};
\node[red] at (2,4) {$\bullet$};
\node[red] at (1,3) {$\bullet$};
\node[red] at (1,1) {$\bullet$};
\node[red] at (3,3) {$\bullet$};
\node[red] at (3,1) {$\bullet$};
\node[red,right] at (2,0) {$1$};
\node[red,right] at (2,2) {$2$};
\node[red,right] at (2,4) {$1$};
\node[red,right] at (1,3) {$1$};
\node[red,right] at (1,1) {$1$};
\node[red,right] at (3,3) {$1$};
\node[red,right] at (3,1) {$1$};
\draw (0,3)--(2,5)--(4,3)--(2,1)--(0,3);
\draw (0,1)--(2,3)--(4,1)--(2,-1)--(0,1);
\draw (1,4)--(3,2) (1,2)--(3,4) (1,2)--(3,0) (1,0)--(3,2);
\draw[->,thick,>=stealth,red] (2,2) -- (1,3);
\draw[->,thick,>=stealth,red] (2,0) -- (1,1);
\draw[->,thick,>=stealth,red] (2,2) -- (3,3);
\draw[->,thick,>=stealth,red] (2,0) -- (3,1);
\draw[->,thick,>=stealth,red] (1,3) -- (2,4);
\draw[->,thick,>=stealth,red] (1,1) -- (2,2);
\draw[->,thick,>=stealth,red] (3,3) -- (2,4);
\draw[->,thick,>=stealth,red] (3,1) -- (2,2);
\draw[->,thick,>=stealth,red] (1,3) -- (1,1);
\draw[->,thick,>=stealth,red] (3,3) -- (3,1);
\draw[->,thick,>=stealth,red] (2,4) -- (2,2);
\draw[->,thick,>=stealth,red] (2,2) -- (2,0);
\end{tikzpicture}
\end{minipage}}
\caption{The module $K(Y_{2,-8}Y_{2,-6})$ in type $\mathbb{A}_3$. The index of $Y_{i,r}$ corresponds to the top vertex of the associated path rectangle $\mathscr{P}_{i,r}$. The dimension is $1$ at those vertices of $\Gamma^-$ that lie in the interior of these rectangles, except that the vertex $(2,-5)$ has a vector space of dimension $2$.}\label{example14}
\end{figure}
\end{example}

\begin{remark}
The dimension of $K(m)$ at a vertex $(i,r)$ can be arbitrary large in the sense that given any integer $\alpha$ there is a snake module $L(m)$ and a vertex $(i,r)$ such that the generic kernel $K(m)$ is of dimension at least $\alpha$ at $(i,r)$. Therefore Corollary \ref{corollary of Grassmannian} is non-trival.
\end{remark}

The following is an example that $\chi^-_q(L(\prod_{i=1}^{T}Y_{i_t,k_t}))\neq \chi_q(L(\prod_{i=1}^{T}Y_{i_t,k_t}))$.

\begin{example}
In type $\mathbb{A}_3$, let $m=Y_{2,-4}Y_{2,-2}$. Then $L(m)$ is a Kirillov-Reshetikhin module and $K(m)$ is displayed in Figure \ref{example15}. By definition, we have 
\[
V_m=({p'}^-_{2,-4}\ominus p^{+}_{2,-4})\cup ({p'}^-_{2,-2}\ominus p^{+}_{2,-2})=\{(2,-3),(2,-1)\},
\]
where ${p'}^-_{2,-2}=\{(0,0), (1,-1), (2,0),(3,-1),(4,0)\}$ and 
\[
{p'}^-_{2,-4}=\{(0,-2), (1,-3), (2,-2),(3,-3),(4,-2)\}.
\] 
Note that ${p'}^-_{2,-2}$ is a path in the set $\mathscr{P}'_{2,-2}$, so it cannot go through points $(i,r)$ with $r>0$.

For each vertex $(i,r)\in V_m$, we label the dimension of the vector space at the vertex $(i,r)$. The dimension associated with a vertex $(i,r)\in V_m$ is the multiplicity of cells with coordinate $(i,r)$ occurring in the multiset $({p'}^-_{2,-4}\ominus p^{+}_{2,-4})\cup ({p'}^-_{2,-2}\ominus p^{+}_{2,-2})$. In Figure \ref{example15}, all vertices carry a vector space of dimension 1.

Starting from the initial seed $({\bf z}, G^-)$, the sequence $((2,0), (2,-2))$ of mutations produces (in the last step) the cluster variable corresponding to $L(m)$.

\begin{figure}
\resizebox{1.0\width}{1.0\height}{
\begin{minipage}[t]{.3\linewidth}
\begin{tikzpicture}
\draw[step=1cm,gray,thin] (0,-1) grid (4,5);
\node[above] at (1,5) {$1$};
\node[above] at (2,5) {$2$};
\node[above] at (3,5) {$3$};
\node[left] at (0,-1) {$2$};
\node[left] at (0,0) {$1$};
\node[left] at (0,1) {$0$};
\node[left] at (0,2) {$-1$};
\node[left] at (0,3) {$-2$};
\node[left] at (0,4) {$-3$};
\node[left] at (0,5) {$-4$};
\node[red] at (2,2) {$\bullet$};
\node[red] at (2,4) {$\bullet$};
\node[red,right] at (2,2) {$1$};
\node[red,right] at (2,4) {$1$};
\draw (0,3)--(2,5)--(4,3)--(2,1)--(0,3);
\draw (0,1)--(2,3)--(4,1) (1,2)--(2,1)--(3,2);
\draw (1,4)--(3,2) (1,2)--(3,4);
\draw[thick,->,>=stealth,red] (2,4) -- (2,2);
\draw[dashed] (0,1)--(2,-1)--(4,1) (1,0)--(2,1)--(3,0);
\end{tikzpicture}
\end{minipage}}
\caption{The module $K(Y_{2,-4}Y_{2,-2})$ in type $\mathbb{A}_3$. The index of $Y_{i,r}$ corresponds to the top vertex of the associated path rectangle $\mathscr{P}_{i,r}$. The dimension is $1$ at vertices $(2,-3)$ and $(2,-1)$.}\label{example15}
\end{figure}
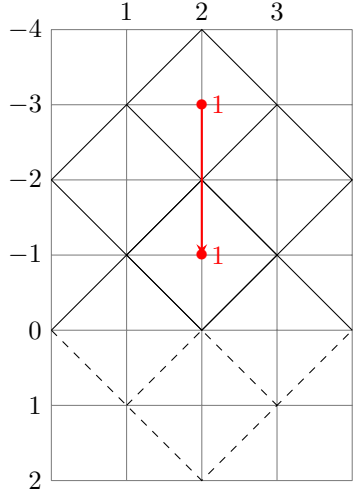
\end{example}

\section{Denominator vector}\label{denominator vector}
In this section, we show that every snake module corresponds to a cluster monomial with square free denominator in the cluster algebra $\mathscr{A}$ and that snake modules are real modules. 

\begin{theorem}\label{cluster monomials with square free denominator}
Let $L(m)$ be an arbitrary snake module. Then the truncated $q$-character $\chi_q^-(L(m))$ is a cluster monomial in $\mathscr{A}$, and its denominator is square free as a monomial in the initial cluster variables $z_{i,r}, (i,r)\in G^-_0$.
\end{theorem}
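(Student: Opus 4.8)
The plan is to reduce to the case of prime snake modules and then prove square-freeness by a hands-on analysis of the Mukhin--Young path model. Write $L(m)\cong L(m_1)\otimes\cdots\otimes L(m_k)$ as a tensor product of prime snake modules (Theorem \ref{factorization of snake modules}). Since $\chi^-_q$ is a ring homomorphism on the Grothendieck ring of $\mathscr{C}^-$, we get $\chi^-_q(L(m))=\prod_{j=1}^{k}\chi^-_q(L(m_j))$, and each factor is a cluster variable of $\mathscr{A}$ by Theorem \ref{prime sm cluster variables}. To upgrade this product to a cluster monomial I would pass to the geometric side: by Remark \ref{snake modules geometric character formula remark}, $\chi^-_q(L(m))=m\,F_{K(m)}$ with $K(m)=\bigoplus_{j=1}^{k}K(m_j)$, each $K(m_j)$ rigid and indecomposable by Corollary \ref{Km rigid}. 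The prime factorization of a snake cuts it precisely at the positions where the highest and lowest paths of consecutive points fail to overlap (Remark \ref{overlapping property of snake modules}), so the path collections of distinct prime blocks, and with them the supports of the $K(m_j)$, occupy pairwise disjoint horizontal bands of $\Gamma^-$ separated widely enough that no arrows run between them; hence $\bigoplus_j K(m_j)$ is rigid. Then, exactly as in the proof of Theorem \ref{theorem of geometric formula for sm} (working in the truncations $A_\ell$ and invoking \cite{DWZ08,DWZ10}), the $F$-polynomial and $g$-vector of a rigid module over the Jacobian algebra are those of a cluster monomial, which here is ${\bf z}^{g(m)}F_{K(m)}=m\,F_{K(m)}=\chi^-_q(L(m))$.

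For square-freeness, recall from Theorem \ref{path description of q-characters} that $L(m)$ is thin and $\chi_q(L(m))=\sum_{(p_1,\dots,p_T)}\prod_t\mathfrak{m}(p_t)$, summed over non-overlapping $T$-tuples of paths. In such a tuple the heights of the paths at any fixed column are strictly increasing, so for every $(j,\ell)\in\mathcal{X}$ the net exponent of $Y_{j,\ell}$ in $\prod_t\mathfrak{m}(p_t)$ lies in $\{-1,0,1\}$: away from node $n$ in type $\mathbb{B}$ this is because $(j,\ell)$ is a corner of at most one path of the tuple, and at a vertex $(n,\ell)$ one uses that at most one path of the tuple contains the point $(2n-1,\ell+\epsilon)$ and at most one contains $(2n-1,\ell-\epsilon)$. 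Hence every Laurent monomial $m'$ occurring in $\chi^-_q(L(m))$ has all its $Y$-exponents $\epsilon_{j,\ell}(m')\in\{-1,0,1\}$. Substituting $Y_{i,r}=z_{i,r}\,z_{i,r+b_{ii}}^{-1}$ (with the convention $z_{i,r+b_{ii}}=1$ when $(i,r+b_{ii})\notin G^-_0$), the exponent of $z_{i,s}$ in $m'$ equals $\epsilon_{i,s}(m')-\epsilon_{i,s-b_{ii}}(m')$. Since all coefficients of $\chi^-_q(L(m))$ are positive, the exponent of $z_{i,s}$ in its denominator is $-\min_{m'}\bigl(\epsilon_{i,s}(m')-\epsilon_{i,s-b_{ii}}(m')\bigr)$, which is at most $1$ unless some $m'$ contains both $Y_{i,s}^{-1}$ and $Y_{i,s-b_{ii}}$. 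So it remains to rule that out.

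Since $(i,s)$ and $(i,s-b_{ii})$ are consecutive vertices of the column of $i$, their images in the path diagram lie in one and the same column $c$ at heights $s$ and $s-b_{ii}$ (for $i=n$ in type $\mathbb{B}$, this is the wall $c=2n-1$ and the heights carry the $\epsilon$-shift). If some $\prod_t\mathfrak{m}(p_t)$ contained both $Y_{i,s}^{-1}$ and $Y_{i,s-b_{ii}}$, then one path $p'$ of the tuple would realize the lower corner at height $s$ and another path $p''$ the upper corner at height $s-b_{ii}$; the corner conditions then force both $p'$ and $p''$ to pass through the common height $s-d_i$ at a column adjacent to $c$ (column $i\pm1$ in type $\mathbb{A}$, one of $c\pm2$ in type $\mathbb{B}$ with $i<n$, and column $2n-2$ in type $\mathbb{B}$ with $i=n$). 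This is incompatible with the non-overlapping property in either order: if $p'$ lies above $p''$ in the tuple, then at column $c$ all heights of $p'$ must be smaller than those of $p''$, forcing $s<s-b_{ii}$; and if $p''$ lies above $p'$, then at the adjacent column one would need $s-d_i<s-d_i$. Hence no such $m'$ exists and the denominator of $\chi^-_q(L(m))$ is square free.

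The step I expect to be the real obstacle is this last one in type $\mathbb{B}$: near the short node $n$ the relevant corners sit on the wall $j=2n-1$ with the $\epsilon$-perturbation, and a path of type $i'<n$ meets that wall twice, so one has to verify the ``common adjacent column'' contradiction for each choice of which wall-branch of a path carries the offending corner and of which of $p',p''$ is the higher path. A secondary point needing care is the separation estimate in the first part guaranteeing that $\bigoplus_j K(m_j)$ is rigid, together with the case $\chi^-_q(L(m))\neq\chi_q(L(m))$; the latter is accommodated by replacing the path sets $\mathscr{P}_{i,k}$ by the truncated sets $\mathscr{P}'_{i,k}$ of Section \ref{generic kernel}, which only deletes monomials and therefore leaves the analysis above intact.
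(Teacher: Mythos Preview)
Your proposal is correct and follows essentially the same strategy as the paper. For the cluster monomial part you argue, as the paper does, that the prime factorisation places the supports of the $K(m_j)$ in pairwise disconnected pieces of $\Gamma^-$, so the direct sum is rigid and one can appeal to \cite{DWZ08,DWZ10,Am09,FK10}; the paper makes this rigidity step more explicit by computing $\Hom_A(K(m_i),I(m_j)^+)=0$ via the socle vertices and passing through the cluster-category $\mathrm{Ext}^1$, but the content is the same. For square-freeness both arguments reduce to ruling out the simultaneous occurrence of an upper corner at $(i,s-b_{ii})$ and a lower corner at $(i,s)$ in a non-overlapping tuple; the paper phrases this directly in terms of the $z$-substitution for pairs of paths, while you first invoke thinness to force $Y$-exponents into $\{-1,0,1\}$ and then analyse the same corner collision. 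Your identification of the type $\mathbb{B}$ wall case as the delicate point is apt: the paper dismisses the single-path case as ``obvious'', but in fact one needs a parity check (the two half-paths of a $\mathscr{P}_{i',k'}$ path with $i'<n$ reach column $2n-1$ from starting heights differing by $4(n-i')-2\not\equiv 0\pmod 4$) to exclude the configuration $(n,s)\in C^-_p$, $(n,s-2)\in C^+_p$ for a single path, and for two distinct paths the strictly-above condition at the wall column already gives the contradiction without needing to match the adjacent columns.
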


\begin{proof}
By Theorem \ref{factorization of snake modules}, we can write $L(m)$ as a tensor product $L(m)\cong L(m_1) \otimes \cdots \otimes L(m_n)$ of prime snake modules. Let $K(m_i)$ be the generic kernel associated to $L(m_i)$ and let $Q(m_i)$ be the full subquiver of $\Gamma^-$ whose vertices are in the support of $K(m_i)$. Thus $K(m)=\bigoplus_{i=1}^n K(m_i)$ is the generic kernel associated to $L(m)$. To show that $L(m)$ corresponds to a cluster monomial we need to prove that $K(m)$ is a rigid object in the cluster category \cite{Am09,FK10}.

Let $\mathscr{P}_{(i_t(m_i),k_t(m_i))_{1\leq t \leq T_i}}$ be the set of paths associated to $L(m_i)$. By Theorem \ref{path description of q-characters} and Remark~\ref{overlapping property of snake modules}, we know that for any $1\leq i\neq j\leq n$, the sets $\mathscr{P}_{(i_t(m_i),k_t(m_i))_{1\leq t \leq T_i}}$ and $\mathscr{P}_{(i_t(m_j),k_t(m_j))_{1\leq t \leq T_j}}$ are non-overlapping. By our construction in Section \ref{generic kernel}, this implies that the quivers $Q(m_i)$ and $Q(m_j)$ are disjoint and there are no arrows in $\Gamma^-$ which connect $Q(m_i)$ and $Q(m_j)$.

Definition \ref{definition of generic kernel} and Proposition \ref{g-vectors} imply that for the prime snake module $L(m_j)$, we have 
\[
I(m_j)^+=\bigoplus_{g_{\ell,s}(m_j)=1} I_{\ell,s-d_\ell}, \quad  I(m_j)^-=\bigoplus_{g_{\ell,s}(m_j)=-1} I_{\ell,s-d_\ell}.
\]
By Section \ref{generic kernel}, the support of $K(m_i)$ is contained in $\mathscr{P}_{(i_t(m_i),k_t(m_i))_{1\leq t \leq T_i}}$. Thus the socle points $(\ell,s-d_\ell)$ in $I(m_j)^+$ cannot be in the support of $K(m_i)$. Otherwise, the set $\mathscr{P}_{\ell,s}$ for $L(m_j)$ and $\mathscr{P}_{(i_t(m_i),k_t(m_i))_{1\leq t \leq T_i}}$ would be overlapping (there is at least a common vertex $(\ell,s)$). This is a contradiction to the fact that $L(m_i)\otimes L(m_j)$ is not prime, see Remark \ref{overlapping property of snake modules}. 

Therefore 
\begin{align}\label{Hom=0}
\begin{split}
\text{Hom}_A(K(m_i),I(m_j)^+) & =\text{Hom}_A(K(m_i),\bigoplus_{g_{\ell,s}(m_j)=1} I_{\ell,s-d_\ell}) \\
& \cong \bigoplus_{g_{\ell,s}(m_j)=1}(K(m_i))_{\ell,s-d_\ell}=0.
\end{split}
\end{align}
Similarly, $\text{Hom}_A(K(m_j),I(m_i)^+)=0$.     

Consider the injective resolution 
\begin{align*}
\xymatrix{
0 \ar[r] & K(m_j) \ar[r]^{i_0} & I^-(m_j) \ar[r]^{i_1} & I(m_j)^+ \ar[r]^{i_2} & \cdots.} 
\end{align*}
Then $\text{Ext}^1_A(K(m_i),K(m_j))$ is a quotient of $\{f\in \Hom(K(m_i),I(m_j)^+) \mid i_2 f=0\}$ which is zero by~(\ref{Hom=0}). Thus 
\[
\text{Ext}^1_A(K(m_i),K(m_j))=0.
\]
Similarly, $\text{Ext}^1_A(K(m_j),K(m_i))=0$.

By Corollary \ref{Km rigid}, we have that $\text{Ext}^1_A(K(m_i),K(m_i))=0$ for any $1\leq i\leq n$. In \cite{Am09}, $K(m_i)$ and $K(m_j)$ are compatible if and only if 
\[
\text{Ext}^1_{\mathcal{C}}(K(m_i),K(m_j))=0,
\]
where $\mathcal{C}$ is the (generalized) cluster category of the Jacobian algebra $A$.

Applying 
\[
\text{Ext}^1_{\mathcal{C}}(K(m_i),K(m_j)) \cong \text{Ext}^1_A(K(m_i),K(m_j)) \bigoplus \text{Ext}^1_A(K(m_j),K(m_i)),
\]
we see that $K(m_i)$ and $K(m_j)$ are compatible for all $i,j=1,\ldots, n$, and hence snake modules are cluster monomials.

Next we prove the statement about square free denominators using the Mukhin-Young's formulas in Theorem \ref{path description of q-characters}. By Theorem \ref{F-polynomial formula} and its proof, we have
\begin{align*}
\chi^-_q(L(m))=m F_{K(m)} = \sum_{(p_1,\ldots,p_T)\in \overline{\mathscr{P}}_{(i_t,k_t)_{1\leq t\leq T}}} m \prod_{t=1}^T h(p_t).
\end{align*}
For any $T$-tuple $(p_1,\ldots,p_T)$ of non-overlapping paths, either $m \prod_{t=1}^T h(p_t) =0$ or  by Theorem~\ref{path description of q-characters}, 
\begin{align}\label{denominator general form}
\begin{split}
\ m \prod_{t=1}^T h(p_t) = \prod_{t=1}^T\mathfrak{m}(p_t) & =\prod_{t=1}^T \left(\prod_{(j,\ell)\in C^{+}_{p_t}}{\hskip -0.5em}Y_{j,\ell}{\hskip -0.5em}\prod_{(j,\ell)\in C^{-}_{p_t}}{\hskip -0.5em}Y^{-1}_{j,\ell}\right) \\
& = \prod_{t=1}^T \left(\prod_{(j,\ell)\in C^{+}_{p_t}} \frac{z_{j,\ell}}{z_{j,\ell+b_{jj}}}  \prod_{(j,\ell)\in C^{-}_{p_t}}{\hskip -0.5em} \frac{z_{j,\ell+b_{jj}}}{z_{j,\ell}} \right),
\end{split}
\end{align}
where the last equation is obtained by performing the change of variables (\ref{variable substitution z-Y}). It is obvious that 
\[
\prod_{(j,\ell)\in C^{+}_{p_t}} \frac{z_{j,\ell}}{z_{j,\ell+b_{jj}}}  \prod_{(j,\ell)\in C^{-}_{p_t}}{\hskip -0.5em} \frac{z_{j,\ell+b_{jj}}}{z_{j,\ell}}
\]
is a fraction with square free denominator in the initial cluster variables $z_{i,r}, (i,r)\in G^-_0$.

For any $1\leq t_1 \neq t_2 \leq T$, the expression 
\begin{align}\label{square free denominator}
\left(\prod_{(j,\ell)\in C^{+}_{p_{t_1}}} \frac{z_{j,\ell}}{z_{j,\ell+b_{jj}}}  \prod_{(j,\ell)\in C^{-}_{p_{t_1}}}{\hskip -0.5em} \frac{z_{j,\ell+b_{jj}}}{z_{j,\ell}} \right) \left(\prod_{(j,\ell)\in C^{+}_{p_{t_2}}} \frac{z_{j,\ell}}{z_{j,\ell+b_{jj}}}  \prod_{(j,\ell)\in C^{-}_{p_{t_2}}}{\hskip -0.5em} \frac{z_{j,\ell+b_{jj}}}{z_{j,\ell}} \right)
\end{align}
is still a fraction with square free denominator. Otherwise either $z_{j,\ell+b_{jj}}$ for some $(j,\ell)\in C^{+}_{p_{t_1}}$ or $z_{j,\ell}$ for some $(j,\ell)\in C^{-}_{p_{t_1}}$ in the first term also appear in the denominator of the second term. If $(j,\ell)\in C^{+}_{p_{t_2}}$, then $p_{t_1}$ and $p_{t_2}$ overlap at least at the vertex $(j,\ell)$. If $(j,\ell)\in C^{-}_{p_{t_2}}$, then $p_{t_1}$ and $p_{t_2}$ overlap at least at a vertex $(i,r)$, where $\iota^{-1}(i,r)=(j\pm 1,\ell+1)$ or $(j\pm 1,\ell+2)$. This is a contradiction. Similarly we deal with $(j,\ell)\in C^{-}_{p_{t_1}}$.

Therefore (by induction) $\chi^-_q(L(m))$ is a Laurent polynomial with square free denominator in the initial cluster variables. 
\end{proof}

Recall that prime snake modules are prime, real modules, see Theorem \ref{factorization of snake modules}. As a slight generalization, we have the following theorem.
\begin{theorem}\label{SM-real modules}
Snake modules are real simple modules.
\end{theorem}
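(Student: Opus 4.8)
The plan is to reduce the statement about arbitrary snake modules to the already-known case of prime snake modules. By Theorem \ref{factorization of snake modules}, any snake module $L(m)$ decomposes as a tensor product $L(m) \cong L(m_1) \otimes \cdots \otimes L(m_n)$ of prime snake modules, and each $L(m_i)$ is a real module by Theorem \ref{prime sm cluster variables} (the $\mathbb{B}_n$ case following from \cite{DLL19} as well). So the task is to show that a tensor product of pairwise ``compatible'' prime snake modules is again real. The natural route is to invoke the connection with cluster monomials established in the proof of Theorem \ref{cluster monomials with square free denominator}: there it is shown that the generic kernels $K(m_i)$ are pairwise compatible rigid objects in the cluster category $\mathcal{C}$ of the Jacobian algebra $A$, so that $K(m) = \bigoplus_i K(m_i)$ is a rigid object and $\chi^-_q(L(m))$ is a cluster monomial.

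First I would record that, since each $L(m_i)$ is prime, $\chi^-_q(L(m_i))$ is a cluster variable (Theorem \ref{prime sm cluster variables}), and by the compatibility statement proved inside Theorem \ref{cluster monomials with square free denominator} the product $\prod_{i=1}^n \chi^-_q(L(m_i)) = \chi^-_q(L(m))$ is a genuine cluster monomial, i.e. all these cluster variables lie in a common cluster. Next I would use the monoidal categorification philosophy: by the injectivity of the truncated $q$-character homomorphism $\chi^-_q$ on the Grothendieck ring of $\mathscr{C}^-$ (Proposition 3.10 of \cite{HL16}), a simple module in $\mathscr{C}^-$ is determined by its truncated $q$-character, and cluster monomials correspond to simple modules whose classes are the corresponding monomials. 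Concretely, for a real simple module the class of $L(m) \otimes L(m)$ in the Grothendieck ring is $[\chi^-_q(L(m))]^2$; since the square of a cluster monomial is again a cluster monomial, it equals the class of a single simple module, which forces $L(m)\otimes L(m)$ to be simple. Here one uses that tensor products of modules in $\mathscr{C}^-$ stay in $\mathscr{C}^-$ and that $\chi^-_q$ is a ring homomorphism.

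Alternatively, and perhaps more self-containedly, I would argue directly via Mukhin--Young's non-overlapping path model. The key point is that $L(m) \otimes L(m)$ has highest weight monomial $m^2$, and one must show $L(m^2)$ (the simple quotient) already accounts for the whole tensor product, equivalently that $[L(m)]^2 = [L(m^2)]$ in $K_0(\mathscr{C})$. Since the snake $(i_t,k_t)$ of $L(m)$ concatenated with itself is again a snake (the points repeat, so each point is trivially in snake position with the previous copy), $L(m^2) = L(m)\otimes L(m)$ by Theorem \ref{factorization of snake modules} applied to the doubled snake, which factors into prime pieces each of which appears with multiplicity two; the $q$-character then factors accordingly by Theorem \ref{path description of q-characters}.

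The main obstacle is the step asserting that a cluster monomial squared corresponds again to a simple module, or equivalently that the doubled snake is genuinely a snake in the sense of the definition and that Theorem \ref{factorization of snake modules} applies to it --- one has to check the repeated points $(i_t,k_t),(i_t,k_t)$ satisfy the snake position condition, which they do not in the strict form stated (the inequality $k'-k \ge |i'-i|+2$ fails when the points coincide). So the cleanest argument is the Grothendieck-ring one: combine that $\chi^-_q(L(m))$ is a cluster monomial (Theorem \ref{cluster monomials with square free denominator}), that products of cluster monomials supported on compatible clusters are cluster monomials, that every cluster monomial is the truncated $q$-character of a simple module in $\mathscr{C}^-$ (by the monoidal categorification results cited for types $\mathbb{A}$, $\mathbb{B}$ via \cite{HL16} and the injectivity of $\chi^-_q$), and that $\chi^-_q$ is an injective ring homomorphism. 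Then $[L(m)\otimes L(m)] = \chi^-_q(L(m))^2 = \chi^-_q(L(m'))$ for a simple $L(m')$, and injectivity forces $L(m)\otimes L(m) \cong L(m')$ to be simple, so $L(m)$ is real.
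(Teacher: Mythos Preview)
Your proposal takes a genuinely different route from the paper, and it has a gap.

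The paper's proof is direct and uses only $q$-character combinatorics. After factoring $L(m)=L(m_1)\otimes\cdots\otimes L(m_n)$ into prime snake modules, it writes
\[
\chi_q(L(m)\otimes L(m))=\prod_{\ell=1}^n \chi_q(L(m_\ell)\otimes L(m_\ell)),
\]
invokes \cite[Theorem 3.4]{DLL19} to conclude that each factor has a unique dominant monomial $m_\ell^2$, and then uses the non-overlapping of the path sets $\mathscr{P}_{(i_t(m_i),k_t(m_i))}$ and $\mathscr{P}_{(i_t(m_j),k_t(m_j))}$ for $i\neq j$ (Remark~\ref{overlapping property of snake modules}) to argue that no cancellation between different factors can produce a new dominant monomial. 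Hence $m^2$ is the unique dominant monomial of $\chi_q(L(m)\otimes L(m))$, which forces $L(m)\otimes L(m)$ to be simple.

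Your argument instead routes through cluster monomials, and the decisive step is the claim that $\chi^-_q(L(m))^2=\chi^-_q(L(m'))$ for some \emph{simple} $L(m')$, i.e.\ that every cluster monomial of $\mathscr{A}$ is the truncated $q$-character of a simple object of $\mathscr{C}^-$. That is precisely (one half of) Conjecture~\ref{conj 5.2}, which the paper is in the process of verifying for snake modules, not assuming. The monoidal categorification results you allude to (Nakajima, Qin) are for simply-laced types and do not cover type $\mathbb{B}$; nothing in \cite{HL16} supplies this statement either. The injectivity of $\chi^-_q$ only tells you that distinct classes in $K_0(\mathscr{C}^-)$ have distinct images; it does not tell you that the preimage of a cluster monomial is the class of a single simple rather than a nontrivial sum of simples. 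So as written your argument is circular in type $\mathbb{B}$ and relies on much heavier external input even in type $\mathbb{A}$, whereas the paper's proof is elementary and uniform in both types.
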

\begin{proof}
We assume that $L(m)$ is a snake module. Then $L(m)=L(m_1)\otimes \cdots \otimes L(m_n)$ with $L(m_i)$ prime by Theorem \ref{factorization of snake modules}. We only need to show that snake modules are real. 

Using the fact that $\chi_q$ is a ring homomorphism, we have  
\begin{align*}
\chi_q(L(m)\otimes L(m)) & = \chi_q(L(m)) \chi_q(L(m)) \\
& = \chi_q(L(m_1))\cdots \chi_q(L(m_n)) \chi_q(L(m_1))\cdots \chi_q(L(m_n))\\
& = \chi_q(L(m_1)\otimes L(m_1))\cdots \chi_q(L(m_n)\otimes L(m_n)).
\end{align*}

By Theorem 3.4 of \cite{DLL19}, we have the fact that for every $1\leq \ell\leq n$, $\chi_q(L(m_\ell)\otimes L(m_\ell))$ has only one dominant monomial $m_\ell^2$.

Using Theorem \ref{path description of q-characters} and Remark \ref{overlapping property of snake modules}, for any $1\leq i\neq j\leq n$, we see that 
\[
\mathscr{P}_{(i_t(m_i),k_t(m_i))_{1\leq t \leq T_i}} \text{ and } \mathscr{P}_{(i_t(m_j),k_t(m_j))_{1\leq t \leq T_j}}
\]
are non-overlapping. So monomials with negative exponents occurring in $\chi_q(L(m_i)\otimes L(m_i))$ cannot be canceled by any monomial occurring in $\chi_q(L(m_j)\otimes L(m_j))$. Thus 
\[
\chi_q(L(m))\chi_q(L(m))=\chi_q(L(m)\otimes L(m))
\]
has only one dominant monomial $m^2$. This shows that $L(m)\otimes L(m)$ is simple, and thus $L(m)$ is real.
\end{proof}

\begin{remark}
\begin{itemize}
\item[(1)] From Proposition \ref{g-vectors}, it follows that for different snake modules, the corresponding cluster monomials have different ${\bf g}$-vectors with respect to a given initial seed.  
\item[(2)] Combining Theorem \ref{cluster monomials with square free denominator} and Theorem \ref{SM-real modules}, we give a partial answer of Conjecture \ref{conj 5.2}.
\end{itemize}
\end{remark}

Recall that $\mathscr{A}$ is the cluster algebra introduced by Hernandez and Lerclec in \cite{HL16}, also see Section~\ref{section2.1}. In \cite{HL16}, Hernandez and Leclerc applied the method of cluster mutations to give an algorithm for computing the $q$-characters of Kirillov-Reshetikhin modules by successive approximations. 

An explicit formula of the expansion for snake modules even Kirillov-Reshetikhin modules in terms of the initial cluster variables is usually very complicated. In the following theorem, we give explicitly the denominator of the cluster monomial associated to a snake module $L(m)$.

\begin{theorem}\label{denominator formula}
Suppose that $L(m)$ is a snake module. Then the denominator of the cluster monomial associated to $L(m)$ is 
\begin{align}\label{expression of denominator formula}
d(\chi^-_q(L(m)))=\prod_{(i,r-d_i) \in \text{Supp}(K(m))} z_{i,r}.
\end{align}
\end{theorem}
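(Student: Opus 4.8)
The plan is to leverage everything already proved about $\chi^-_q(L(m))$ and to express both the denominator and $\mathrm{Supp}(K(m))$ in the Mukhin--Young path model, then match them cell by cell. By Theorem~\ref{cluster monomials with square free denominator}, $\chi^-_q(L(m))$ is a cluster monomial whose denominator in the initial variables $z_{i,r}$ is square free, and moreover every individual monomial $\prod_{t=1}^T\mathfrak m(p_t)$ of its path expansion has square free denominator; hence the denominator has the form $\prod_{(i,r)\in D}z_{i,r}$, where $D$ is the set of $(i,r)\in G^-_0$ for which some monomial of $\chi^-_q(L(m))$ has $z_{i,r}$-exponent exactly $-1$. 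So the whole problem reduces to proving that $D=\{(i,r)\in G^-_0:(i,r-d_i)\in\mathrm{Supp}(K(m))\}$. First I would dispose of the reduction to the prime case: by Theorem~\ref{factorization of snake modules} write $L(m)=L(m_1)\otimes\cdots\otimes L(m_n)$ with $L(m_i)$ prime, so $K(m)=\bigoplus_i K(m_i)$ and $\mathrm{Supp}(K(m))=\bigsqcup_i\mathrm{Supp}(K(m_i))$; since $\chi^-_q$ is a ring homomorphism we have $\chi^-_q(L(m))=\prod_i\chi^-_q(L(m_i))$, and because the path sets of distinct prime factors are non-overlapping the initial variables occurring in the factors are disjoint (this is exactly the input used in the proof of Theorem~\ref{cluster monomials with square free denominator}), so the denominators multiply and it is enough to treat a prime snake module.

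So assume $L(m)$ is prime. By Theorem~\ref{F-polynomial formula} together with Theorem~\ref{path description of q-characters} we have $\chi^-_q(L(m))=\sum_{(p_1,\dots,p_T)}\prod_{t=1}^T\mathfrak m(p_t)$, where each monomial equals both the corner expression in~(\ref{denominator general form}) and, via Lemma~5.10 of~\cite{MY12a} as already used in the proof of Theorem~\ref{F-polynomial formula}, the product $m\prod A^{-1}_{\mathrm{cell}}$ over the cell coordinates of $\bigcup_t\bigl(p_t\ominus p^+_{i_t,k_t}\bigr)$ counted with multiplicity. On the other hand, Section~\ref{generic kernel} and Theorem~\ref{F-polynomial formula} identify $\mathrm{Supp}(K(m))$ with the set $V_m$ of all cell coordinates that occur for some non-overlapping tuple, equivalently with the cell coordinates of the lowest admissible tuple $({p'}^-_{i_1,k_1},\dots,{p'}^-_{i_T,k_T})$. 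Now I would substitute $m=\mathbf z^{g(m)}$ from Proposition~\ref{g-vectors} and $A^{-1}_{c,d}=\widehat y_{c,d+d_c}$, whose numerator and denominator are the products of the $z_{j,s}$ over the outgoing, resp. incoming, arrows of $(c,d+d_c)$ in $G^-$. Then the exponent of a fixed $z_{i,r}$ in the monomial attached to a tuple becomes $g_{i,r}(m)$ plus a signed count of those cells of $\bigcup_t(p_t\ominus p^+_{i_t,k_t})$ that are neighbours of $(i,r-d_i)$ in $\Gamma^-$; the same quantity can also be read off directly from the upper and lower corners via~(\ref{denominator general form}).

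With this in hand I would prove the two inclusions. For $(i,r-d_i)\in V_m\Rightarrow(i,r)\in D$, I would evaluate the exponent above on the lowest admissible tuple and, using the explicit cell shapes (squares, squares missing a corner, right triangles) and their coordinates from Section~\ref{MY paths}, check that no internal cancellation occurs, so that the exponent equals $-1$. For $(i,r)\in D\Rightarrow(i,r-d_i)\in V_m$, I would argue by contradiction exactly in the style of the square-free argument of Theorem~\ref{cluster monomials with square free denominator}: if $z_{i,r}$ appeared in the denominator of some monomial with $(i,r-d_i)\notin V_m$, then $g_{i,r}(m)\ge 0$ in that case, so the signed count of neighbouring cells would be strictly negative, and tracing the corresponding corners forces two paths $p_s,p_t$ of the tuple to share a vertex of their path sets --- impossible for a non-overlapping tuple; the remaining possibility $g_{i,r}(m)<0$ is settled by noting that then $(i,r-d_i)$ lies in the socle of $I(m)^-$, which embeds into $K(m)$, hence $(i,r-d_i)\in\mathrm{Supp}(K(m))=V_m$. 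Finally, unwinding the change of variables~(\ref{variable substitution z-Y}) turns the description of $D$ into the index set of~(\ref{expression of denominator formula}).

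The hard part will be the core cell-by-cell computation in the previous paragraph: matching the $\pm1$ contributions of the upper and lower corners of the paths against the exponents of the $z_{i,r}$, and in particular verifying that the lowest admissible tuple has \emph{no} internal cancellation, so that its denominator is exactly $\prod_{(i,r-d_i)\in V_m}z_{i,r}$ rather than something smaller. This is the genuinely combinatorial content of the statement and relies on the precise geometry of the Mukhin--Young model and on the overlapping/non-overlapping dichotomy of Remark~\ref{overlapping property of snake modules}; everything surrounding it is bookkeeping built on Theorems~\ref{cluster monomials with square free denominator}, \ref{F-polynomial formula} and~\ref{path description of q-characters}.
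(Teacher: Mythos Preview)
Your plan contains a genuine gap in the forward inclusion $(i,r-d_i)\in V_m\Rightarrow(i,r)\in D$. You propose to witness every such $z_{i,r}$ in the denominator by looking at the single lowest admissible tuple $({p'}^-_{i_1,k_1},\dots,{p'}^-_{i_T,k_T})$ and checking that ``no internal cancellation occurs, so that the exponent equals $-1$''. This is false: massive cancellation does occur, and the lowest monomial carries only a small part of the denominator. Already for the fundamental module $L(Y_{2,-6})$ in type $\mathbb{A}_3$ the lowest monomial is $Y^{-1}_{2,-2}=z_{2,0}/z_{2,-2}$, whose denominator is just $z_{2,-2}$, whereas $V_m=\{(2,-5),(1,-4),(3,-4),(2,-3)\}$ forces $z_{2,-4}z_{1,-3}z_{3,-3}z_{2,-2}$. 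In Example~\ref{exampleA32} the lowest tuple gives $Y^{-1}_{2,-6}Y^{-1}_{2,-2}$ with denominator $z_{2,-6}z_{2,-2}$, missing six of the eight factors listed after Theorem~\ref{denominator formula}. So no single tuple suffices; different cells of $V_m$ are witnessed by different monomials of $\chi^-_q(L(m))$.

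The paper's proof fixes exactly this: for each cell $(i,r-d_i)\in\mathrm{Supp}(K(m))$ it constructs a \emph{separate} tuple. One chooses the maximal $t$ for which some $p_t\in\mathscr{P}'_{i_t,k_t}$ has $(i,r)$ as its \emph{unique} lower corner, and sets $p_\ell=p^+_{i_\ell,k_\ell}$ for $\ell<t$ and $p_\ell={p'}^-_{i_\ell,k_\ell}$ for $\ell>t$. In that monomial the factor $Y^{-1}_{i,r}=z_{i,r+b_{ii}}/z_{i,r}$ is the only source of $z_{i,r}$ in the denominator and is not cancelled. The reverse inclusion in the paper is also simpler than your contradiction argument: if $z_{i,r}$ occurs in some denominator then by~(\ref{denominator general form}) either $(i,r)\in C^-_{p_\ell}$, in which case $(i,r-d_i)$ is the cell directly above, or $(i,r-b_{ii})\in C^+_{p_\ell}$, in which case one lowers that upper corner to $(i,r)$ to exhibit the same cell. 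Your reduction to the prime case is harmless but unnecessary; the paper works directly with an arbitrary snake.
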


\begin{proof}
We first show that for any $(i,r-d_i) \in \text{Supp}(K(m))$, the variable $z_{i,r}$ appears in the denominator of the cluster monomial associated to $L(m)$.  

We assume without loss of generality that $m=\prod_{i=1}^{T}Y_{i_t,k_t}$ for some snake $(i_t,k_t)_{1\leq t\leq T}$. The cluster monomial of $L(m)$ is given by the truncated $q$-character $\chi^-_q(L(m))$ after the change of variables (\ref{variable substitution z-Y}). For any $(i,r)$ such that $(i,r-d_i) \in \text{Supp}(K(m))$, there exists a path $p_t\in \mathscr{P}'_{i_t,k_t}$ such that $(i,r)$ is the unique lower corner of $p_t$. We choose $p_t$ such that $t$ is maximal. Define $p_\ell \in \mathscr{P}'_{i_\ell,k_\ell}$ by 
\begin{align*}
p_\ell = \begin{cases} 
p^{+}_{i_{\ell},k_{\ell}} & 1\leq \ell <t, \\
{p'}^{-}_{i_{\ell},k_{\ell}} & t<\ell\leq T.
\end{cases}
\end{align*}
Then the $T$-tuple $(p_1,\ldots,p_T)$ is a set of non-overlapping paths, because $p_t$ is a path between $p^{+}_{i_t,k_t}$ and ${p'}^{-}_{i_t,k_t}$.

Using equation (\ref{proof of F-polynomial formula-6}), we have 
\begin{align*} 
\chi^-_q(L(m))= \sum_{\substack{(p_{1},\ldots,p_{T}) \in \overline{\mathscr{P}}_{(i_{\ell},k_{\ell})_{1 \leq \ell \leq T}}\\ C^+_{p_\ell}, C^-_{p_\ell}\subset G^-_0}} \prod_{\ell=1}^{T}\mathfrak{m}(p_{\ell}).
\end{align*}

After performing the change of variables (\ref{variable substitution z-Y}), the variable $z_{i,r}$ appears in the denominator of $\prod_{\ell=1}^{T}\mathfrak{m}(p_{\ell})$. Thus the product (\ref{expression of denominator formula}) appears in the denominator of the cluster monomial $\chi^-_q(L(m))$. 

On the other hand, for any $T$-tuple $(p_1,\ldots,p_T)$ of non-overlapping paths, we have the following: If $z_{i,r}$, $(i,r)\in G^-_0$, appears in the denominator of $\prod_{\ell=1}^{T}\mathfrak{m}(p_{\ell})$,  then $(i,r)\in C^-_{p_\ell}$ or $(i,r-b_{ii})\in C^+_{p_\ell}$ for some $\ell$ by equation (\ref{denominator general form}). For the lower corner $(i,r)\in C^-_{p_\ell}$, we have $(i,r-d_i)\in \text{Supp}(K(m))$, by Remark (\ref{compute dimensional vector})~(1). For the upper corner $(i,r-b_{ii})\in C^+_{p_\ell}$, we need to modify our path $p_\ell$ by replacing the point $(i,r-b_{ii})$ by the point $(i,r)$. Note that $(i,r)\in  G^-_0$, so the modified path is still in $\mathscr{P}'_{(i_\ell,k_\ell)}$. Now Remark (\ref{compute dimensional vector})~(1) implies $(i,r-d_i)\in \text{Supp}(K(m))$.
\end{proof}

We illustrate this result in our five running examples. Note that for each vertex $(i,r-d_i)$ in the support of $K(m)$, as shown in Figures \ref{example11}--\ref{example15}, we have a contribution $z_{i,r}$ in the denominator. Here $d_i=1$ in type $\mathbb{A}$ and $d_1=2$ and $d_2=1$ in type $\mathbb{B}_2$. 

\begin{example}
In type $\mathbb{A}_3$, let $m=Y_{1,-15} Y_{3,-11} Y_{3,-9} Y_{2,-6}$. Then by Theorem \ref{denominator formula}, 
\[
d(\chi^-_q(L(m)))= z_{2,-2}z_{1,-3}z_{3,-3}z_{2,-4} z_{1,-5}z_{2,-6}z_{3,-7}z_{1,-7}z_{2,-8}z_{3,-9}z_{3,-11}z_{2,-12}z_{1,-13}.
\]
\end{example}

\begin{example}
In type $\mathbb{A}_3$, let $m=Y_{2,-10} Y_{2,-6}$. Then by Theorem \ref{denominator formula},
\[
d(\chi^-_q(L(m)))= z_{2,-2}z_{1,-3}z_{3,-3}z_{2,-4} z_{2,-6}z_{1,-7}z_{3,-7}z_{2,-8}.
\]
\end{example}

\begin{example}
In type $\mathbb{B}_2$, let $m=Y_{2,-12} Y_{2,-6}$. Then by Theorem \ref{denominator formula},  
\[
d(\chi^-_q(L(m)))=z _{2,0}z _{1,-1}z _{2,-4}z _{2,-6}z _{1,-7}z_{2,-10}.
\]
\end{example}

\begin{example}
In type $\mathbb{A}_3$, let $m=Y_{2,-8}Y_{2,-6}$. Then by Theorem \ref{denominator formula},
\[
d(\chi^-_q(L(m)))=z_{2,-2}z_{1,-3}z_{3,-3}z_{2,-4}z_{1,-5}z_{3,-5} z_{2,-6}.
\]
\end{example}

\begin{example}
In type $\mathbb{A}_3$, let $m=Y_{2,-4}Y_{2,-2}$. Then by Theorem \ref{denominator formula},
\[
d(\chi^-_q(L(m)))=z_{2,0}z_{2,-2}.
\]
\end{example}

It is natural to ask whether all cluster variables with square free denominator are always prime snake modules. The answer is No. The following example shows that there exists a module that is not a snake module and such that its truncated $q$-character corresponds to a cluster variable with square free denominator. 

\begin{example}\label{example4.8}
In type $\mathbb{A}_3$, let $m=Y_{1,-3}Y_{2,0}Y_{3,-3}$. This is not a snake module. Because the second coordinates in the indices do not form an increasing sequence. By Example 12.2 of \cite{HL10}, we have 
\[
[L(Y_{1,-3}Y_{2,0}Y_{3,-3})] [L(Y_{2,0})] = [L(Y_{1,-3}Y_{2,0})][L(Y_{3,-3}Y_{2,0})] + [L(Y_{2,-2}Y_{2,0})].
\]
Thus by Theorem 5.1 of \cite{HL16}
\[
\chi^-_q(L(m))=\frac{\chi^-_q(L(Y_{1,-3}Y_{2,0}))\chi^-_q(L(Y_{3,-3}Y_{2,0}))+\chi^-_q(L(Y_{2,-2}Y_{2,0}))}{\chi^-_q(L(Y_{2,0}))}.
\]
On the right hand side of the equation, every truncated $q$-character is known by the Frenkel-Mukhin algorithm, so  
\[
\chi^-_q(L(m)) = m(1+A^{-1}_{1,-2}+A^{-1}_{3,-2}+A^{-1}_{1,-2}A^{-1}_{3,-2}+A^{-1}_{1,-2}A^{-1}_{3,-2}A^{-1}_{2,-1}).
\]

The corresponding cluster variable $x_m$ is 
\[
x_m = \frac{(z_{1,-1}z_{3,-1}+z_{2,0}z_{1,-3})z_{2,-2}+z^2_{2,-2}+(z^2_{2,0}z_{1,-3}+z_{2,0}z_{2,-2})z_{3,-3}}{z_{1,-1}z_{2,0}z_{3,-1}},
\]
which has a square free denominator.
\end{example}

Finally, we point out that there exists a module beyond snake modules in $\mathscr{C}^-$ for which Hernandez and Leclerc's conjectural geometric formula holds.

\begin{example}\label{example compare HL with MY}
In type $\mathbb{A}_3$, let $m=Y_{1,-7}Y_{2,-4}Y_{3,-7}$. By \cite{HL10}, we know that $L(m)$ corresponds to a cluster variable in $\mathscr{A}$ (up to scalar), equivalently, its truncated $q$-character is a cluster variable in $\mathscr{A}$. Let
\begin{align*}
I(m)^+ = I_{1,-8} \oplus I_{2,-5} \oplus I_{3,-8}, \quad I(m)^- = I_{1,-6} \oplus I_{2,-3} \oplus I_{3,-6}.
\end{align*}
By Example 12.2 of \cite{HL10}, we have 
\[
[L(Y_{1,-7}Y_{2,-4}Y_{3,-7})] [L(Y_{2,-4})] = [L(Y_{1,-7}Y_{2,-4})][L(Y_{3,-7}Y_{2,-4})] + [L(Y_{2,-6}Y_{2,-4})].
\]
With the exception of $L(Y_{1,-7}Y_{2,-4}Y_{3,-7})$, those modules are minimal affinizations \cite{C95}, and we can compute their $q$-characters by the Frenkel-Mukhin algorithm. 

On the other hand, the formula in Theorem \ref{theorem of geometric formula for sm} holds for $L(Y_{1,-7}Y_{2,-4}Y_{3,-7})$. The module $K(m)$ has dimension 10 and is displayed in Figure \ref{example4}. In Figure \ref{example4}, almost all vertices carry a vector space of dimension 1, except the vertex $(2,-5)$ which carries a vector space of dimension 2. The maps associated with the arrows incident to $(2,-5)$ have the following matrices:
\[
\alpha=\begin{pmatrix} 1 & 1 \end{pmatrix}, \ \beta=\begin{pmatrix} 1 &  0 \end{pmatrix}, \ \gamma=\begin{pmatrix} 0 &  1 \end{pmatrix}, \ \delta=\begin{pmatrix} 1 \\ 0 \end{pmatrix},  \  \eta=\begin{pmatrix} 0 \\ 1 \end{pmatrix}. 
\]
All other arrows carry linear maps with $(\pm 1)$, whose sign is easily deduced from the defining relations of the Jacobian algebra $A$. There are 70 submodules in $K(m)$.

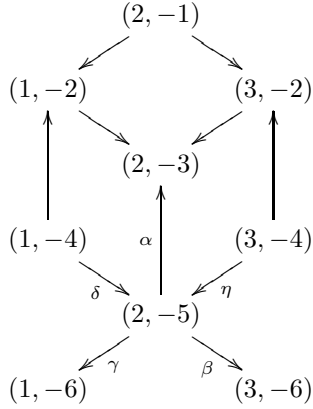
\begin{figure}
\resizebox{1.0\width}{1.0\height}{
\begin{minipage}[t]{.5\linewidth}
\centerline{
\begin{xy}
(10,100) *+{(1,-2)} ="0",
(25,110) *+{(2,-1)} ="1",
(25,90) *+{(2,-3)} ="2",
(40,100) *+{(3,-2)} ="3",
(10,60) *+{(1,-6)} ="4",
(25,70) *+{(2,-5)} ="5",
(40,80) *+{(3,-4)} ="6",
(40,60) *+{(3,-6)} ="7",
(10,80) *+{(1,-4)} ="9",
"1", {\ar"0"},
"1", {\ar"3"},
"0", {\ar"2"},
"3", {\ar"2"},
"6", {\ar^\eta "5"},
"5", {\ar^\gamma "4"},
"9", {\ar_\delta "5"},
"5", {\ar_\beta "7"},
"9", {\ar"0"},
"6", {\ar"3"},
"5", {\ar^\alpha "2"},
\end{xy}}
\end{minipage}}
\caption{The $A$-module $K(m)$ for $m=Y_{1,-7}Y_{2,-4}Y_{3,-7}$ in type $\mathbb{A}_3$.}\label{example4}
\end{figure}  

Then
\begin{align*}
\chi^-_q(L(m)) & = m((1+v_{1,-6}+v_{3,-6}+v_{1,-6}v_{3,-6})(1+v_{2,-3}+v_{2,-3}v_{1,-2}+v_{2,-3}v_{3,-2} \\
& + v_{1,-2}v_{2,-3}v_{3,-2}+v_{2,-3}v_{1,-2}v_{3,-2}v_{2,-1}) \\
& + (v_{1,-6}v_{2,-5}+v_{3,-6}v_{2,-5}) (v_{2,-3}+v_{2,-3}v_{1,-2}+v_{2,-3}v_{3,-2} + v_{1,-2}v_{2,-3}v_{3,-2} \\
& + v_{2,-3}v_{1,-2}v_{3,-2}v_{2,-1})  \\
& + v_{1,-6}v_{2,-3}v_{2,-5}v_{3,-2}v_{3,-4} (1+ v_{1,-2} + v_{1,-2}v_{2,-1}) \\
& + v_{3,-6}v_{2,-3}v_{2,-5}v_{1,-2}v_{1,-4} (1+ v_{3,-2} + v_{3,-2}v_{2,-1}) \\ 
& + v_{1,-6}v_{2,-3}v_{2,-5}v_{3,-2}v_{3,-4}v_{3,-6}(1+v_{1,-2}+v_{1,-2}v_{2,-1})  \\
& + v_{3,-6}v_{2,-3}v_{2,-5}v_{1,-2}v_{1,-4}v_{1,-6}(1+v_{3,-2}+v_{3,-2} v_{2,-1}) \\
& +v_{1,-6}v_{2,-5}v_{3,-6} +2v_{1,-6}v_{2,-3}v_{2,-5}v_{3,-6} + 2v_{1,-6}v_{2,-3}v_{2,-5}v_{3,-6}v_{1,-2} \\
& +2v_{1,-6}v_{2,-3}v_{2,-5}v_{3,-6}v_{3,-2}+2v_{1,-6}v_{2,-3}v_{2,-5}v_{3,-6}v_{1,-2}v_{3,-2} \\
& + 2v_{1,-6}v_{2,-3}v_{2,-5}v_{3,-6}v_{1,-2}v_{3,-2}v_{2,-1} \\  
& + v_{1,-6}v_{2,-3}v_{2,-5}v_{3,-6}v_{2,-5}(1+v_{1,-2}+v_{3,-2}+v_{1,-2}v_{3,-2}+v_{1,-2}v_{3,-2}v_{2,-1}) \\
& +v_{1,-6}v_{2,-3}v_{2,-5}v_{3,-6}v_{2,-5}v_{1,-2}v_{1,-4}(1+v_{3,-2}+v_{3,-2}v_{2,-1}) \\
& +v_{1,-6}v_{2,-3}v_{2,-5}v_{3,-6}v_{2,-5}v_{3,-2}v_{3,-4}(1+v_{1,-2}+v_{1,-2}v_{2,-1}) \\
& +v_{1,-6}v_{2,-3}v_{2,-5}v_{3,-6}v_{2,-5}v_{1,-2}v_{1,-4}v_{3,-2}v_{3,-4}  \\
& +v_{2,-3}v_{3,-2}v_{1,-2}v_{3,-6}v_{2,-5}v_{1,-4}v_{1,-6}v_{2,-5}v_{3,-4}v_{2,-1}),
\end{align*}
in agreement with Hernandez and Leclerc's conjectural geometric formula and its denominator is not square free. Indeed, there exists a submodule of $K(m)$ whose support at vertices $(2,-3)$, $(1,-6)$, $(3,-6)$ such that
\[
m v_{1,-6}v_{2,-3}v_{3,-6}=mA^{-1}_{1,-6}A^{-1}_{2,-3}A^{-1}_{3,-6} = \frac{z^2_{3,-3}z^2_{2,-6}z^2_{1,-3}z_{2,0}}{z_{1,-1}z_{1,-5}z_{2,-2}z^2_{2,-4}z_{3,-1}z_{3,-5}}. 
\] 

Moreover, the simple module $L(Y_{1,-7}Y_{2,-4}Y_{3,-7})$ is not special, because there are two dominant monomials $m=Y_{1,-7}Y_{2,-4}Y_{3,-7}$ and $mv_{1,-6}v_{2,-5}v_{3,-6}=Y_{2,-6}$ in $\chi_q(L(Y_{1,-7}Y_{2,-4}Y_{3,-7}))$. The simple module $L(Y_{1,-7}Y_{2,-4}Y_{3,-7})$ is not thin, because some terms in the $q$-character have coefficient $2>1$.

Using the method introduced in Section \ref{generic kernel}, we obtain the dimension vector $(d_{j,s}(K(m))_{(j,s)\in \mathbb{N}^{\Gamma^-_0}}$ of $K(m)$ as follows.
\begin{align*}
d_{j,s}(K(m))=\begin{cases}
1 & (j,s)=(2,-1), (1,-2), (3,-2), (2,-3), (1,-4), (3,-4),(1,-6), (3,-6), \\
2 & (j,s)=(2,-5), \\
0 & \text{ otherwise.}
\end{cases}
\end{align*}
\end{example}

Comparing Example \ref{example4.8} with Example \ref{example compare HL with MY}, we reformulate the following classification question.

In the cluster algebra $\mathscr{A}$, are cluster variables with square free denominators in bijection with prime snake modules and some other prime real modules whose truncated $q$-characters are not equal to their $q$-characters?  

\section{Factorial cluster algebras} \label{sect factorial}

In this section, we apply the results of \cite{ELS18} to show that the $\mathscr{C}_1$ cluster algebra is factorial for Dynkin types $\mathbb{A,D,E}$. 

Following Section 4.2 of \cite{HL10}, let $Q$ be a quiver with vertex set $\{1,\ldots,n,1',\ldots,n'\}$ subject to the following two conditions:
\begin{itemize}
\item[(1)] The full subquiver on $\{1,\ldots,n\}$ is an orientation of the associated Dynkin diagram $\Delta$ of type $\mathbb{A,D}$ or $\mathbb{E}$, oriented in such a way that every vertex in $I_0$ is a source and every vertex of $I_1$ is a sink;
\item[(2)] For every $i\in I$, one adds a frozen vertex $i'$ and an arrow $i'\to i$ if $i\in I_0$ and an arrow $i\to i'$ if $i\in I_1$.
\end{itemize}
Obviously, the defining quiver $Q$ is an acyclic quiver. Let $\mathscr{A}(Q)$ be the cluster algebra  defined by the initial seed $(\{x_1,\ldots,x_n,y_1,\ldots,y_n\},Q)$. Then $\mathscr{A}(Q)$ is the  $\mathscr{C}_1$ cluster algebra of type $\Delta$ in~\cite{HL10}. Let $x'_1,\ldots,x_n'$ be the $n$ cluster variables obtained from the initial seed by one single mutation. Then, for each $i$, we have $x_ix_i'=f_i$, where $f_i$ is a binomial in the initial seed. 
Recall from  \cite{ELS18} that  two vertices $i,j\in\{1,2,\ldots,n\}$   are called \emph{partners} if $f_i$ and $f_j$ have a non-trivial common factor. Partnership is an equivalence relation and the equivalence classes are called \emph{partner sets}. 
  
\begin{theorem}\label{factorial of C1}
The $\mathscr{C}_1$ cluster algebra is factorial for Dynkin types $\mathbb{A,D,E}$.
\end{theorem}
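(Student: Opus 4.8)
The plan is to apply the factoriality criterion of \cite{ELS18}, which says that an acyclic cluster algebra whose initial exchange matrix has full rank is factorial provided that for every partner set $P$ the cluster variables $\{x_i\mid i\in P\}$ together with the frozen variables form a \emph{polynomially coprime} family, or more precisely that the criterion of \cite{ELS18} governing factoriality in terms of partner sets is satisfied. So the first step is to recall the precise statement of the relevant theorem from \cite{ELS18} and to verify its hypotheses for $\mathscr{A}(Q)$: the quiver $Q$ is acyclic by construction, the cluster algebra has a coefficient ring generated by the frozen variables $y_1,\dots,y_n$, and the exchange matrix of the initial seed has full rank $n$ because the principal part is (the matrix of) a Dynkin quiver, which is always nonsingular.

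Next I would compute the binomials $f_i$ explicitly. From the defining quiver $Q$ in Section~\ref{sect factorial}, mutating at a vertex $i$ gives $x_ix_i' = \prod_{j\to i} x_j \cdot y_i + \prod_{i\to j} x_j$ (with the convention that the product over frozen arrows contributes $y_i$), so each $f_i$ is a genuine binomial in the variables $x_j$ ($j$ adjacent to $i$ in $\Delta$) and $y_i$. Because every vertex of $I_0$ is a source and every vertex of $I_1$ is a sink, for $i\in I_0$ one term of $f_i$ is the monomial $\prod_{j\sim i}x_j$ (a product over neighbors $j\in I_1$) and the other term is $y_i$; symmetrically for $i\in I_1$ one term is $\prod_{j\sim i}x_j$ and the other is $y_i\prod_{\text{sinks}}$. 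The point is to read off when two such binomials $f_i,f_j$ share a nontrivial common factor: since $f_i,f_j$ are each irreducible (a binomial $au+bv$ with $a,b$ coprime monomials is irreducible), a common factor forces $f_i=c\,f_j$ up to a monomial, which by comparing supports happens precisely when $i$ and $j$ are non-adjacent vertices in the same part ($I_0$ or $I_1$) with identical neighborhoods in $\Delta$ --- and in a Dynkin diagram of type $\mathbb{A},\mathbb{D},\mathbb{E}$ this essentially never happens, so each partner set is a singleton. I would check the one or two small cases (e.g.\ the two leaves of $\mathbb{D}_4$ adjacent to the central node) to confirm that even there the presence of the distinct frozen variables $y_i\neq y_j$ breaks the equality, so that indeed $f_i$ and $f_j$ are coprime.

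With all partner sets singletons, the criterion of \cite{ELS18} reduces to checking that each individual exchange binomial $f_i$ is irreducible in the polynomial ring $\mathbb{Z}[x_1,\dots,x_n,y_1,\dots,y_n]$, which as noted above is immediate since $f_i = M_i + N_i$ with $M_i,N_i$ coprime monomials. Then \cite{ELS18} yields that $\mathscr{A}(Q)$ is a factorial ring, i.e.\ a UFD, which is exactly the assertion of Theorem~\ref{factorial of C1}. I would finish by remarking that the cluster variables $x_1,\dots,x_n$ obtained by single mutations, together with the frozen variables, then constitute (part of) a system of prime elements, consistent with the cluster structure.

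The main obstacle I anticipate is purely bookkeeping rather than conceptual: one must be careful about the exact form of the exchange relations in \cite{HL10}'s quiver $Q$ --- in particular how the frozen arrow at $i$ enters $f_i$ and whether a sink $i\in I_1$ receives additional arrows --- and then confirm in the finitely many borderline configurations (twin leaves in $\mathbb{D}_n$, the trivalent vertex in $\mathbb{E}_n$, the endpoints in $\mathbb{A}_n$) that no two $f_i$ share a factor once the distinct frozen variables are taken into account. Verifying the full-rank hypothesis of \cite{ELS18} is routine (Cartan matrices of finite type are invertible), and the irreducibility of binomials is elementary, so the argument should go through cleanly once the combinatorics of $Q$ is nailed down.
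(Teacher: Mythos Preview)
Your approach is essentially the same as the paper's: apply the partner-set criterion of \cite{ELS18} for acyclic seeds and show that every partner set is a singleton. The paper's argument is more direct than your outline, however: it simply observes that for each $i\in I$ the exchange binomial $f_i$ contains the frozen variable $y_i$, and since $y_i$ appears in no other $f_j$, the polynomials $f_i$ and $f_j$ are automatically coprime for $i\neq j$. This single observation makes your case analysis of twin leaves in $\mathbb{D}_n$, neighborhoods in $\Delta$, and the possibility $f_i = c\,f_j$ unnecessary; once you note that the frozen arrow at $i$ contributes a $y_i$ to exactly one monomial of $f_i$, coprimality is immediate and the result follows from Corollary~5.2 of \cite{ELS18} without further checks.
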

\begin{proof}
By Corollary 5.2 of \cite{ELS18}, we only need to show that every partner set in $Q$ is a singleton. This holds because for every $i\in I$ the exchange polynomial $f_i$ is a polynomial in the variable $y_i$.
\end{proof}

We give an example to explain Theorem \ref{factorial of C1}.
  
\begin{example} 
Let $\mathfrak{g}$ be of type $\mathbb{A}_3$. We choose $I_0=\{1,3\}$ and $I_1=\{2\}$. The quiver $Q$ is as follows.
\begin{align*}
\xymatrix{
1 \ar[r] & 2 \ar[d] & 3 \ar[l] \\
\fbox{1'} \ar[u] & \fbox{2'} & \fbox{3'} \ar[u] }
\end{align*}
Here the vertices with boxes are frozen vertices and its associated exchange matrix is
 
\begin{align*}
\begin{bmatrix}
0 & 1 & 0 \\
-1 & 0 & -1 \\
0 & 1 & 0 \\
1 & 0 & 0 \\
0 & -1 & 0 \\
0 & 0 & 1
\end{bmatrix}.
\end{align*}
The exchange polynomials are 
\begin{align*}
& f_1 = x_2 + y_1, \\
& f_2 = x_1x_3 + y_2, \\
& f_3 = x_2 + y_3.
\end{align*}
The polynomials $f_1,f_2,f_3$ are pairwise coprime and hence every partner set in $Q$ is a singleton.
\end{example}

\begin{remark}
In Section 7.1 of \cite{GLS13}, Geiss, Leclerc, and Schr\"oer proved that the cluster algebra $\mathscr{A}$ associated to Dynkin type $\mathbb{A}_1$ is a factorial cluster algebra. They also showed that the cluster variables in a factorial cluster algebra are prime elements.  
In  \cite[Theorem 3.10]{ELS18}, it was shown that if $\mathscr{A}$ is a factorial cluster algebra and $x$ is a non-initial cluster variable, then the associated $F$-polynomial $F_x$ is prime. 

It is natural to ask whether the $\mathscr{C}_\ell$  cluster algebras, with $\ell>1$, are factorial. The argument in the proof of Theorem \ref{factorial of C1} does not work in this case, because we don't know whether these cluster algebras are  of acyclic type.
\end{remark}

\section*{Acknowledgements}
We would like to thank A. Garcia Elsener for explaining the results of \cite{ELS18} to us.

\begin{bibdiv}
\begin{biblist}

\bib{Am09}{article}{
author={Amiot, C.}, 
title={Cluster categories for algebras of global dimension 2 and quivers with potential},
journal={Ann. Inst. Fourier (Grenoble)},
date={2009},
volume={59},
number={6},
pages={2525--2590}}

\bib{C95}{article}{
author={Chari, V.},
title={Minimal affinizations of representations of quantum groups: the rank $2$ case},
journal={Publ. Res. Inst. Math. Sci.},
date={1995},
volume={31},
number={5},
pages={873--911}}

\bib{CP91}{article}{
author={Chari, V.},author={Pressley, A.},
title={Quantum affine algebras},
journal={Comm. Math. Phys.},
date={1991},
volume={142},
number={2},
pages={261--283}}

\bib{CP94}{book}{
author={Chari, V.},author= {Pressley, A.},
title={A guide to quantum groups},
publisher={Cambridge University Press},
address={Cambridge},
year={1994}}

\bib{CP95a}{article}{
author={Chari, V.},author= {Pressley, A.},
title={Quantum affine algebras and their representations},
conference={
title={Representations of groups},
address={Banff, AB},
date={1994}},
book={
series={CMS Conf. Proc.},
volume={16},
publisher={Amer. Math. Soc.},
address={Providence, RI},
date={1995}},
pages={59--78}}

\bib{CP97}{article}{
author={Chari, V.},author= {Pressley, A.},
title={Factorization of representations of quantum affine algebras},
conference={
title={Modular interfaces},
address={Riverside, CA},
date={1995}},
book={series={AMS/IP Stud. Adv. Math.},
volume={4},
publisher={Amer. Math. Soc.},
address={Providence, RI},
date={1997}},
pages={33--40}}

\bib{CKLP13}{article}{
author={Cerulli Irelli, G.},author={Keller,B.},author={Labardini-Fragoso,D.},author={Plamondon, P.},
title={Linear independence of cluster monomials for skew-symmetric cluster algebras},
journal={Compos. Math.},
date={2013},
volume={149},
number={10},
pages={1753--1764}}

\bib{DWZ08}{article}{
author={Derksen, H.},author={Weyman, J.},author={Zelevinsky, A.},
title={Quivers with potentials and their representations. I. Mutations},
journal={Selecta Math. (N.S.)},
date={2008},
volume={14},
number={1},
pages={59--119}}

\bib{DWZ10}{article}{
author={Derksen, H.},author={Weyman, J.},author={Zelevinsky, A.},
title={Quivers with potentials and their representations II: applications to cluster algebras},
journal={J. Amer. Math. Soc.},
date={2010},
volume={23},
number={3},
pages={749--790}}

\bib{DLL19}{article}{
author={Duan, B.},author={Li, J.-R.},author={Luo, Y.-F},
title={Cluster algebras and snake modules},
journal={J. Algebra},
date={2019},
volume={519},
pages={325--377}}

\bib{ELS18}{article}{
author={Elsener, G.},author={Lamp, P.},author={Smertnig, D.},
title={Factoriality and class groups of cluster algebras},
journal={arXiv:1712.06512},
pages={1--32}}

\bib{FM01}{article}{
author={Frenkel, E.},author={Mukhin, E.},
title={Combinatorics of $q$-characters of finite-dimensional representations of quantum affine algebras},
journal={Comm. Math. Phys.},
date={2001},
volume={216},
number={1},
pages={23--57}}

\bib{FR98}{article}{
author={Frenkel, E.},author={Reshetikhin, N.},
title={The $q$-characters of representations of quantum affine algebras and deformations of $W$-algebras},
conference={title={Recent developments in quantum affine algebras and related topics},
address={Raleigh, NC},
date={1998}},
book={series={Contemp. Math.},
volume={248},
publisher={Amer. Math. Soc.},
address={Providence, RI},
date={1999}},
pages={163--205}}

\bib{FZ02}{article}{
author={Fomin, S.},author={Zelevinsky, A.},
title={Cluster algebras I: Foundations},
journal={J. Amer. Math. Soc.},
date={2002},
volume={15},
number={2},
pages={497--529}}


\bib{FZ07}{article}{
author={Fomin, S.},author={Zelevinsky, A.},
title={Cluster algebras. IV. Coefficients},
journal={Compos. Math.},
date={2007},
volume={143},
number={1},
pages={112--164}}

\bib{FK10}{article}{
author={Fu, C.},author={Keller, B.},
title={On cluster algebras with coefficients and 2-Calabi-Yau categories},
journal={Trans. Amer. Math. Soc.},
date={2010},
volume={362},
number={2},
pages={859--895}}

\bib{GLS13}{article}{
author={Geiss, C.},author={Leclerc, B.},author={Schr\"oer, J.},
title={Factorial cluster algebras},
journal={Doc. Math.},
date={2013},
volume={18},
pages={249--274}}


\bib{HL10}{article}{
author={Hernandez, D.},author={Leclerc, B.},
title={Cluster algebras and quantum affine algebras},
journal={Duke Math. J.},
date={2010},
volume={154},
number={2},
pages={265--341}}

\bib{HL13}{article}{
author={Hernandez, D.},author={Leclerc, B.},
title={Monoidal categorifications of cluster algebras of type $A$ and $D$},
conference={title={Symmetries, integrable systems and representations}},
book={series={Springer Proc. Math. Stat.},
volume={40},
publisher={Springer},
address={Heidelberg},
date={2013}},
pages={175--193}}

\bib{HL16}{article}{
author={Hernandez, D.},author={Leclerc, B.},
title={A cluster algebra approach to $q$-characters of Kirillov-Reshetikhin modules},
journal={J. Eur. Math. Soc. (JEMS)},
date={2016},
volume={18},
number={5},
pages={1113--1159}}


\bib{Le03}{article}{
author={Leclerc, B.},
title={Imaginary vectors in the dual canonical basis of $U_{q}(n)$},journal={Transform Groups},
date={2003},
volume={8},
number={1},
pages={95--104}}

\bib{Le10}{article}{
author={Leclerc, B.},
title={Cluster algebras and representation theory},
conference={
title={Proceedings of the International Congress of Mathematicians}},
book={series={Hindustan Book Agency, New Delhi},
volume={IV},
date={2010}},
pages={2471--2488}}

\bib{LS15}{article}{
author={Lee, K.},author={Schiffler, R.},
title={Positivity for cluster algebras},
journal={Ann. of Math. (2)},
date={2015},
volume={182},
number={1},
pages={73--125}}

\bib{MY12a}{article}{
author={Mukhin, E.},author={Young, C. A. S.},
title={Path description of type $B$ $q$-characters},
journal={Adv. Math.},
date={2012},
volume={231},
number={2},
pages={1119--1150}}

\bib{MY12b}{article}{
author={Mukhin, E.},author={Young, C. A. S.},
title={Extended $T$-systems},
journal={Selecta Math. (N.S.)},
date={2012},
volume={18},
number={3},
pages={591--631}}


\bib{Nak04}{article}{
author={Nakajima, H.},
title={Quiver varieties and $t$-analogs of $q$-characters of quantum affine algebras},
journal={Ann. of Math. (2)},
date={2004},
volume={160},
number={3},
pages={1057--1097}}

\bib{Nak11}{article}{
author={Nakajima, H.},
title={Quiver varieties and cluster algebras},
journal={Kyoto J. Math.},
date={2011},
volume={51},
number={1},
pages={71--126}}


\bib{Pal08}{article}{
author={Palu, Y.},
title={Cluster characters for 2-Calabi-Yau triangulated categories},
journal={Ann. Inst. Fourier (Grenoble)},
date={2008},
volume={58},
number={6},
pages={2221--2248}}

\bib{Pal12}{article}{
author={Palu, Y.},
title={Cluster characters II: a multiplication formula},
journal={Proc. Lond. Math. Soc. (3)},
date={2012},
volume={104},
number={1},
pages={57--78}}

\bib{P12}{article}{
author={Plamondon, P.-G.},
title={Generic bases for cluster algebras from the cluster category},
journal={Int. Math. Res. Not. IMRN},
volume={2013}, 
number={10},
pages={2368--2420}}

\bib{Q17}{article}{
author={Qin, F.},
title={Triangular bases in quantum cluster algebras and monoidal categorification conjectures},
journal={Duke Math. J.},
date={2017},
volume={166},
number={12},
pages={2337--2442}}

\bib{ZDLL16}{article}{
author={Zhang, Q.-Q.},author={Duan, B.},author={Li, J.-R.},author={Luo, Y.-F.},
title={M-systems and cluster algebras},
journal={Int. Math. Res. Not. IMRN},
volume={2016},
number={14},
pages={4449--4486}}

\end{biblist}
\end{bibdiv}
\end{document}